\numberwithin{equation}{section}
\theoremstyle{plain}
\newtheorem{theorem}{Theorem}[section]
\newtheorem{lemma}[theorem]{Lemma}
\newtheorem{remark}[theorem]{Remark}
\newtheorem{proposition}[theorem]{Proposition}
\begin{document}

\begin{frontmatter}
\title{Functional approximations via Stein's method of exchangeable pairs}
\runtitle{Functional Stein's method with exchangeable pairs}

\begin{aug}
\author{\fnms{Miko{\l}aj J.} \snm{Kasprzak}\ead[label=e1]{mikolaj.kasprzak@uni.lu}},

\runauthor{Miko{\l}aj J. Kasprzak}

\affiliation{University of Oxford}

\address{University of Luxembourg\\
Department of Mathematics\\
Maison du Nombre\\
6 Avenue de la Fonte\\
L-4364 Esch-sur-Alzette\\
Luxembourg\\
\printead{e1}\\
\phantom{E-mail:\ }}
\end{aug}

\begin{abstract}
We combine the method of exchangeable pairs with Stein's method for functional approximation. As a result, we give a general linearity condition under which an abstract Gaussian approximation theorem for stochastic processes holds. We apply this approach to estimate the distance of a sum of random variables, chosen from an array according to a random permutation, from a Gaussian mixture process. This result lets us prove a functional combinatorial central limit theorem. We also consider a graph-valued process and bound the speed of convergence of the distribution of its rescaled edge counts to a continuous Gaussian process.\\~\\
\textbf{R\'esum\'e:} Nous combinons la m\'ethode des paires \'echangeables avec la m\'ethode d’approximation fonctionnelle de Stein. De cette fa{\c c}on, nous obtenons une condition g\'en\'erale de lin\'earit\'e sous laquelle un r\'esultat abstrait d’approximation Gaussienne est valide. Nous appliquons cette approche \`a l’estimation de la distance entre une somme de variables al\'eatoires, choisies dans un tableau par le biais d’une permutation al\'eatoire, et un m\'elange de processus Gaussiens. \`A partir de ce r\'esultat, nous prouvons un th\'eor\`eme central limite fonctionnel combinatoire. Nous consid\'erons \'egalement un graphe al\'eatoire et fournissons des bornes pour la vitesse de convergence de la loi de son nombre d’ar\^etes (apr\'es un changement d'\'echelle) vers un processus Gaussien continu.
\end{abstract}

\begin{keyword}[class=MSC]
\kwd[Primary ]{60B10}
\kwd{60F17}
\kwd[; secondary ]{60B12, 60J65, 60E05, 60E15}
\end{keyword}

\begin{keyword}
\kwd{Stein's method}
\kwd{functional convergence}
\kwd{exchangeable pairs}
\kwd{stochastic processes}
\end{keyword}
\end{frontmatter}
\section{Introduction}
In \cite{stein} Stein observed that a random variable $Z$ has the standard normal law if and only if 
\begin{equation}\label{stein_identity}
\mathbbm{E}Zf(Z)=\mathbbm{E}f'(Z)
\end{equation}
for all smooth functions $f$. Therefore, if, for a random variable $W$ with mean zero and variance $1$, 
\begin{equation}\label{stein_equation}
|\mathbbm{E}f'(W)-\mathbbm{E}Wf(W)|
\end{equation}
 is close to zero for a large class of functions $f$, then the law of $W$ should be approximately Gaussian. In \cite{stein1}, Stein combined this observation with his \textit{exchangeable-pair} approach. Therein, for a centred and scaled random variable $W$, its copy $W'$ is constructed in such a way that $(W,W')$ forms an exchangeable pair and the linear regression condition:
\begin{equation}\label{regression_condition}
\mathbbm{E}\left[W'-W|W\right]=-\lambda W
\end{equation}
is satisfied for some $\lambda>0$. This, in many cases, simplifies the process of obtaining bounds on the distance of $W$ from the normal distribution.

This approach was extended in \cite{rinott} to examples in which an approximate linear regression condition holds:
\begin{equation*}
\mathbbm{E}\left[W'-W|W\right]=-\lambda W+R
\end{equation*}
for some remainder $R$. A multivariate version of the method was first described in \cite{meckes} and then in \cite{reinert_roellin}. In  \cite{reinert_roellin}, for an exchangeable pair of $d$-dimensional vectors $(W,W')$ the following condition is used:
\begin{equation}\label{lambda_matrix}
\mathbbm{E}[W'-W|W]=-\Lambda W+R
\end{equation}
for some invertible matrix $\Lambda$ and a remainder term $R$.
The approach of \cite{reinert_roellin} was further reinterpreted and combined with the approach of \cite{meckes} in \cite{meckes09}. 


On the other hand, in the seminal paper \cite{diffusion}, Barbour addressed the problem of providing bounds on the rate of convergence in functional limit results (or invariance principles as they are often called in the literature). He observed that Stein's logic of \cite{stein} may also be used in  the setup of the Functional Central Limit Theorem. He found a condition, similar to (\ref{stein_identity}), characterising the distribution of a standard real Wiener process. Combined with Taylor's theorem, it allowed Barbour to obtain a bound on the rate of convergence in the celebrated Donsker's invariance principle.

%
This paper is an attempt to combine the method of exchangeable pairs with functional approximations. We provide a novel approach to bounding distances of stochastic processes from Gaussian processes. Our approach is influenced by the setup of \cite{rinott}  and \cite{diffusion}.

\subsection{Motivation}
We are motivated by a number of (finite-dimensional) examples studied in Stein's method literature using exchangeable pairs, which could be extended to the functional setting. Functional limit results play an important role in applied fields. Researchers often choose to model discrete phenomena with continuous processes arising as scaling limits of discrete ones. The reason is that those scaling limits may be studied using stochastic analysis and are more robust to changes in local details. Questions about the rate of convergence in functional limit results are equivalent to ones about the error those researchers make. Obtaining bounds on a certain distance between the scaled discrete and the limiting continuous processes provides a way of quantifying this error.

We consider two main examples. The first one is a combinatorial functional central limit theorem. The second one considers a process representing edge counts in a graph-valued process created by unveiling subsequent vertices of a Bernoulli random graph as time progresses. 

The former is a functional version of the result proved qualitatively in \cite{chen_ho} and quantitatively in \cite{chen2015} and an extension of the main result of \cite{functional_combinatorial}. It considers an array $\left\lbrace X_{i,j}:i,j=1,\cdots,n\right\rbrace$ of i.i.d. random variables, which are then used to create a stochastic process:
\begin{equation}\label{process}
t\mapsto \frac{1}{s_n}\sum_{i=1}^{\lfloor nt\rfloor}X_{i\pi(i)},
\end{equation}
where $s_n^2$ is the variance of $\sum_{i=1}^n X_{i\pi(i)}$ and $\pi$ is a uniform random permutation on $\lbrace 1,\cdots,n\rbrace$. The motivation for studying this and similar topics comes from permutation tests in non-parametric statistics. Similar setups, yet with a deterministic array of numbers, and in a finite-dimensional context, have also been considered by other authors (see \cite{wald_wolfowitz} for one of the first works on this topic and \cite{bolthausen}, \cite{goldstein}, \cite{neammanee} for quantitative results).

The second example, which considers Bernoulli random graphs, goes back to \cite{Janson1991}. It was first studied using exchangeable pairs in a finite-dimensional context in \cite{reinert_roellin1}, where a random vector whose components represent statistics corresponding to the number of edges, two-stars and triangles is studied. The authors bound its distance from a normal distribution. We consider a functional analogue of this result, concentrating, for simplicity, only on the number of edges.
 Our approach can, however, be also extended to encompass the number of two-stars and triangles using a multivariate functional exchangeable-pair methodology which we leave for future work. All of those statistics are often of interest in applications, for example, when approximating the clustering coefficient of a network or in conditional uniform graph tests. 
\subsection{Contribution of the paper}
The main achievements of the paper are the following:
\begin{enumerate}[label=(\alph*)]
\item An abstract approximation theorem (Theorem \ref{theorem1}), providing a bound on the distance between a stochastic process $\mathbf{Y}_n$ valued in $\mathbbm{R}$ and a Gaussian mixture process. The theorem assumes that the process $\mathbf{Y}_n$ satisfies the linear regression condition 
\begin{equation*}
\mathbbm{E}\left\lbrace Df(\mathbf{Y}_n)\left[\mathbf{Y}_n'-\mathbf{Y}_n\right]|\mathbf{Y}_n\right\rbrace=-\lambda_nDf(\mathbf{Y}_n)[\mathbf{Y}_n]+R_f,
\end{equation*}
for all functions $f$ in a certain class of test functions, some $\lambda_n>0$ and some random variable $R_f=R_f(\mathbf{Y}_n)$. Crucially, the bound in Theorem \ref{theorem1} is derived with respect to a class of test functions so rich that the bound approaching zero fast enough, under certain assumptions, implies the law of $\mathbf{Y}_n$ converging weakly in the Skorokhod and uniform topologies on the Skorokhod space. The exact conditions under which this convergence occurs are stated in Proposition \ref{prop_m}. Theorem \ref{theorem1} is used in the derivation of the remaining results of this paper.
\item A novel functional combinatorial central limit theorem. In Theorem \ref{combi_pre_lim}, we establish a bound on the distance between process (\ref{process}) and a Gaussian mixture, piecewise constant process. Furthermore, a qualitative result showing convergence in distribution of process (\ref{process}) to a continuous Gaussian limiting process is provided in Theorem \ref{conv_cont_theorem}. Thus, we extend \cite{functional_combinatorial}, where similar results were proved under the assumption that all the $X_{i,j}$'s for $i,j=1,\cdots,n$ are deterministic. Our bound is also an extension of \cite{chen2015}, where a bound on the rate of weak convergence of the law of $\frac{1}{s_n}\sum_{i=1}^nX_{i\pi(i)}$ to the standard normal distribution is obtained.
\item A novel functional limit theorem for a statistic corresponding to edge counts in a Bernoulli random graph, together with a bound on the rate of convergence. We consider a Bernoulli random graph $G(n,p)$ on $n$ vertices with edge probabilities $p$. Letting $I_{i,j}$, for $i,j=1,\cdots,n$ be the indicator that edge $(i,j)$ is present in the graph, we study a scaled statistic representing the number of edges:
$$\mathbf{T}_n(t)=\frac{\lfloor nt\rfloor -2}{2n^2}\sum_{i,j=1}^{\lfloor nt\rfloor}I_{i,j},\quad t\in[0,1].$$
Theorem \ref{theorem_pre_limiting} provides a bound on the distance between the law of the process
\begin{equation}\label{graph_ex}
t\mapsto \mathbf{T}_n(t)-\mathbbm{E}\mathbf{T}_n(t),\quad t\in[0,1]
\end{equation}
and the law of a piecewise constant Gaussian process. Theorem \ref{theorem_continuous} bounds a distance between the law of (\ref{graph_ex}) and the distribution of a continuous Gaussian process. Weak convergence of the law of (\ref{graph_ex}) in the Skorokhod and uniform topologies on the Skorokhod space to that of the continuous Gaussian process follows from the bound as a corollary.
\end{enumerate}
\subsection{Stein's method of exchangeable pairs}
The idea behind the exchangeable-pair approach of \cite{stein1} was the following. In order to obtain a bound on a distance between the distribution of a centred and scaled random variable $W$ and the standard normal law, one can bound (\ref{stein_equation}) for functions $f$ coming from a suitable class. Supposing that we can construct a $W'$ such that $(W,W')$ is an exchangeable pair and (\ref{regression_condition}) is satisfied, we can write
\begin{align*}
0=&\mathbbm{E}\left[(f(W)+f(W'))(W-W')\right]\\
=&\mathbbm{E}\left[(f(W')-f(W))(W-W')\right]+2\mathbbm{E}\left[f(W)\mathbbm{E}[W-W'|W]\right]\\
=&\mathbbm{E}\left[(f(W')-f(W))(W-W')\right]+2\lambda\mathbbm{E}[Wf(W)].
\end{align*}
It follows that
$$\mathbbm{E}[Wf(W)]=\frac{1}{2\lambda}\mathbbm{E}\left[(f(W)-f(W'))(W-W')\right].$$
Therefore, using Taylor's theorem,
\begin{align*}
&\left|\mathbbm{E}[f'(W)]-\mathbbm{E}[Wf(W)]\right|\\
=&\left|\mathbbm{E}[f'(W)]+\frac{1}{2\lambda}\mathbbm{E}\left[(f(W')-f(W))(W-W')\right]\right|\\
\leq&\left|\mathbbm{E}f'(W)-\frac{1}{2\lambda}\mathbbm{E}f'(W)(W-W')^2\right|+\frac{\|f''\|_{\infty}}{2\lambda}\mathbbm{E}|W-W'|^3\\
\leq&\|f'\|_{\infty}\mathbbm{E}\left|\mathbbm{E}\left[\frac{1}{2\lambda}(W-W')^2|W\right]-1\right|+\frac{\|f''\|_{\infty}}{2\lambda}\mathbbm{E}|W-W'|^3\\
\leq&\frac{\|f'\|_{\infty}}{2\lambda}\sqrt{\text{Var}\left[\mathbbm{E}\left[(W-W')^2|W\right]\right]}+\frac{\|f''\|_{\infty}}{2\lambda}\mathbbm{E}|W-W'|^3,
\end{align*}
which provides the desired bound.

Before the publication of \cite{meckes, reinert_roellin, meckes09} the method was restricted to one-dimensional approximations. It was, however, also used in the context of non-normal approximations (e.g \cite{chatterjee, chatterjee1, roellin1}). More recently several authors have extended and applied the method. D{\"o}bler extended it to Beta distribution in \cite{dobler} and Chen and Fang used it for the combinatorial CLT \cite{chen2015}.
\subsection{Stein's method in its generality}

The aim of the general version of Stein's method is to find a bound of the quantity $|\mathbbm{E}_{\nu_n}h-\mathbbm{E}_{\mu}h|$, where $\mu$ is the target (known) distribution, $\nu_n$ is the approximating law and $h$ is chosen from a suitable class of real-valued test functions $\mathcal{H}$. The procedure can be described in terms of three steps. First, an operator $\mathcal{A}$ acting on a class of real-valued functions is sought, such that $$\left(\forall f\in\text{Domain}(\mathcal{A})\quad\mathbbm{E}_{\nu}\mathcal{A}f=0\right)\quad \Longleftrightarrow \quad\nu=\mu,$$
where $\mu$ is our target distribution. Then, for a given function $h\in\mathcal{H}$, the following Stein equation:
$$\mathcal{A}f=h-\mathbbm{E}_{\mu}h$$
is solved. Finally, using properties of the solution and various mathematical tools (among which the most popular are Taylor's expansions in the continuous case, Malliavin calculus, as described in \cite{nourdin}, and coupling methods), a bound is sought for the quantity $|\mathbbm{E}_{\nu_n}\mathcal{A}f_h|$.

\subsection{Functional Stein's method}
Approximations by laws of stochastic processes have not been covered in the Stein's method literature very widely, with the notable exceptions of \cite{diffusion, functional_combinatorial, shih, Coutin, decreusefond_higher} and recently \cite{kasprzak1, kasprzak2, decreusefond2, decreusefond_rough, campese}. In \cite{shih}, Stein's method is developed for approximations by abstract Wiener measures on a real separable Banach space. References \cite{kasprzak1, decreusefond2} establish a method for bounding the speed of weak convergence of continuous-time Markov chains satisfying certain assumptions to diffusion processes. Reference \cite{kasprzak2}, on the other hand, treats multi-dimensional processes represented by scaled sums of random variables with different dependence structures and establishes bounds on their distances from continuous Gaussian processes. The recent reference \cite{campese} introduces a Dirichlet structure and the corresponding Gamma calculus in an infinite-dimensional context and combines it with the methodology of \cite{shih} to derive bounds on distances from Gaussian random variables valued in Hilbert spaces. 

The bounds in references \cite{diffusion, functional_combinatorial, kasprzak1, kasprzak2} are obtained with respect to convergence-determining classes of test functions and weak convergence results in the Skorokhod topology follow from the bounds as corollaries. These references all use and adapt the setup of \cite{diffusion}. Therein, the author studies Donsker's theorem saying that for a sequence of  i.i.d. real random variables $(X_n)_{n=1}^{\infty}$ with mean zero and unit variance, the random process
\begin{equation}\label{int}
\mathbf{Y}_n(t)= n^{-1/2}\sum_{i=1}^{\lfloor nt\rfloor}X_i,\quad t\in[0,1]
\end{equation}
converges in distribution to the standard Brownian motion with respect to the Skorokhod topology.  Through a careful and technical application of the general Stein's method philosophy explained above, and a subsequent repetitive use of Taylor's theorem, Barbour \cite{diffusion} proved a powerful estimate on a distance between the law of $\mathbf{Y}_n$ in (\ref{int}) and the Wiener measure. Specifically, he considered test functions $g$ acting on the Skorokhod space $D \left( [0,1], \mathbbm{{R}} \right)$ of c\`adl\`ag real-valued maps on $[0,1]$, such that $g$ takes values in the reals, does not grow faster than a cubic, is twice Fr\'echet differentiable and its second derivative is Lipschitz. Denoting by $\mathbf{Z}$ Brownian motion on $[0,1]$ and adopting the notation of (\ref{int}), his result says that
$$|\mathbbm{E}g(\mathbf{Y}_n)-\mathbbm{E}g(\mathbf{Z})|\leq C_g\frac{\mathbbm{E}|X_1|^3+\sqrt{\log n}}{\sqrt{n}},$$
where $C_g$ is a constant, independent of $n$, yet depending on the (carefully defined) \textit{smoothness properties} of $g$. Among the applications and extensions considered by Barbour are an analysis of the empirical distribution function of i.i.d. random variables and the Wald-Wolfowitz theorem often used to construct tests in non-parametric statistics \cite{wald1940}.

The results of \cite{Coutin, decreusefond_higher, decreusefond2, decreusefond_rough} consider a totally different setup. Therein, the authors develop a Stein theory on a Hilbert space using a Besov-type topology. Their bounds do not imply weak convergence in the Skorokhod topology and, for most of the natural examples one can consider, the continuous mapping theorem does not apply in the setting they work with. For instance, as opposed to the results of \cite{diffusion}, those of \cite{Coutin} do not allow one to draw conclusions about convergence of the supremum of a process. For these reasons, the setup we use to obtain the bounds in the current paper is analogous to that of the former class of references \cite{diffusion, functional_combinatorial, kasprzak1, kasprzak2}. We consider it more flexible than the one of the class of references \cite{Coutin, decreusefond_higher, decreusefond2, decreusefond_rough}. It is also more suited for applications to processes belonging to the widely-used Skorokhod space than the one of \cite{shih}, which only caters for separable Banach spaces.

\subsection{Structure of the paper}
The paper is organised as follows. In Section \ref{section22} we introduce the spaces of test functions which will be used in the main results. We also quote a proposition showing that, under certain assumptions, they determine convergence in distribution under the uniform topology. In Section \ref{section33} we set up the Stein equation for approximation by a pre-limiting process and provide properties of the solutions. In Section \ref{section4} we provide an exchangeable-pair condition and prove an abstract exchangeable-pair-type approximation theorem. Section \ref{section5} is devoted to the functional combinatorial central limit theorem example and Section \ref{section6} discusses the graph-valued process example.
\section{Spaces $M$, $M^1$, $M^2$, $M^0$}\label{section22}
The following notation is used throughout the paper. For a function $w$ defined on the interval $[0,1]$ and taking values in a Euclidean space, we define $$\|w\|=\sup_{t\in[0,1]}|w(t)|.$$ 
We also let $D=D([0,1],\mathbbm{R})$ be the Skorokhod space of all c\`adl\`ag functions on $[0,1]$ taking values in $\mathbbm{R}$.  We will often write $\mathbbm{E}^W[\,\cdot\,]$ instead of $\mathbbm{E}[\,\cdot\,|W]$.

Let us define:
$$\|f\|_L:=\sup_{w\in D}\frac{|f(w)|}{1+\|w\|^3}\text{,}$$
and let $L$ be the Banach space of continuous functions $f:D\to\mathbbm{R}$ such that $\|f\|_L<\infty$. Following \cite{diffusion}, we now let $M\subset L$ consist of the twice Fr\'echet differentiable functions $f$, such that:
\begin{equation}\label{space_m}
\|D^2f(w+h)-D^2f(w)\|\leq k_f\|h\|\text{,}
\end{equation}
for some constant $k_f$, uniformly in $w,h\in D$. By $D^kf$ we mean the $k$-th Fr\'echet derivative of $f$ and the norm of a $k$-linear form $B$ on $L$ is defined to be $\|B\|=\sup_{\lbrace h:\|h\|=1\rbrace} |B[h,...,h]|$. Note the following lemma, which can be proved in an analogous way to that used to show (2.6) and (2.7) of \cite{diffusion}. We omit the proof here. 
\begin{lemma}\label{first_der}
For every $f\in M$, let:
\begin{align*}
\|f\|_M:=&\sup_{w\in D}\frac{|f(w)|}{1+\|w\|^3}+\sup_{w\in D}\frac{\|Df(w)\|}{1+\|w\|^2}+\sup_{w\in D}\frac{\|D^2f(w)\|}{1+\|w\|}+\sup_{w,h\in D}\frac{\|D^2f(w+h)-D^2f(w)\|}{\|h\|}.
\end{align*}
Then, for all $f\in M$, we have $\|f\|_M<\infty$.
\end{lemma}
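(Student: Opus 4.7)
The plan is to control the four suprema defining $\|f\|_M$ in reverse order, bootstrapping from the Lipschitz condition on $D^2f$ up to bounds on $Df$ and $f$ itself. The last supremum is finite by hypothesis: condition (\ref{space_m}) directly gives $\sup_{w,h}\|D^2f(w+h)-D^2f(w)\|/\|h\|\le k_f$. Setting $w=0$ in (\ref{space_m}) yields $\|D^2f(h)\|\le\|D^2f(0)\|+k_f\|h\|$ for every $h\in D^p$, which shows that the third supremum is bounded by $\max(\|D^2f(0)\|,k_f)$, establishing at most linear growth of $\|D^2f(w)\|$ in $\|w\|$.

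Next I would estimate $\|Df(w)\|$ using the fundamental theorem of calculus for Fr\'echet derivatives. For arbitrary $w,h\in D^p$,
\begin{equation*}
Df(w)[h]=Df(0)[h]+\int_0^1 D^2f(sw)[w,h]\,ds,
\end{equation*}
so, taking operator norms and inserting the linear bound on $\|D^2f\|$ from the previous step,
\begin{equation*}
\|Df(w)\|\le\|Df(0)\|+\|w\|\int_0^1\bigl(\|D^2f(0)\|+k_f s\|w\|\bigr)ds\le C_f\bigl(1+\|w\|^2\bigr),
\end{equation*}
for a constant $C_f$ depending only on $\|Df(0)\|$, $\|D^2f(0)\|$ and $k_f$. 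This gives finiteness of the second supremum.

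For the first supremum, membership $f\in M\subset L$ already guarantees $\|f\|_L<\infty$, which is exactly $\sup_w |f(w)|/(1+\|w\|^3)<\infty$. (If one prefers not to invoke this directly, the same integral-remainder argument with the quadratic bound on $\|Df\|$ yields $|f(w)|\le|f(0)|+\int_0^1\|Df(sw)\|\|w\|ds\le C_f'(1+\|w\|^3)$, recovering the estimate.) Summing the four finite quantities yields $\|f\|_M<\infty$.

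There is no real obstacle: the argument is a two-step bootstrap from the Lipschitz hypothesis on $D^2f$ via Fr\'echet-space Taylor expansion, which is exactly the scheme used by Barbour in (2.6)--(2.7) of \cite{diffusion}. The only point worth mild care is that the integral representations are valid for Fr\'echet-differentiable functions on the Banach space $(D^p,\|\cdot\|)$, which is standard.
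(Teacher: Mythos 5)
Your proof is correct and is exactly the bootstrap argument the paper has in mind when it points to (2.6)--(2.7) of \cite{diffusion}: the Lipschitz hypothesis on $D^2f$ controls the fourth and third suprema, a Taylor/fundamental-theorem-of-calculus expansion then gives the quadratic bound on $\|Df\|$, and the first supremum is immediate from $M\subset L$. Nothing to add.
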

For future reference, we let $ M^1\subset M$ be the class of functionals $g\in M$ such that:
\begin{align}
\|g\|_{M^1}:=&\sup_{w\in D}\frac{|g(w)|}{1+\|w\|^3}+\sup_{w\in D}\|Dg(w)\|+\sup_{w\in D}\|D^2g(w)\|+\sup_{w,h\in D}\frac{\|D^2g(w+h)-D^2g(w)\|}{\|h\|}<\infty\label{m_1}
\end{align}
and $ M^2\subset M$ be the class of functionals $g\in M$ such that:
\begin{align}
\|g\|_{M^2}:=&\sup_{w\in D}\frac{|g(w)|}{1+\|w\|^3}+\sup_{w\in D}\frac{\|Dg(w)\|}{1+\|w\|}+\sup_{w\in D}\frac{\|D^2g(w)\|}{1+\|w\|}+\sup_{w,h\in D}\frac{\|D^2g(w+h)-D^2g(w)\|}{\|h\|}<\infty\text{.}\label{m_2}
\end{align}

We also let $M^0$ be the class of functionals $g\in M$ such that:
\begin{align}
\|g\|_{M^0}:=&\sup_{w\in D}|g(w)|+\sup_{w\in D}\|Dg(w)\|+\sup_{w\in D}\|D^2g(w)\|+\sup_{w,h\in D}\frac{\|D^2g(w+h)-D^2g(w)\|}{\|h\|}<\infty\text{.}\nonumber
\end{align}
We note that $M^0\subset M^1\subset M^2\subset M$. 
The next proposition is \cite[Proposition 3.1]{functional_combinatorial} and shows conditions, under which convergence of the sequence of expectations of a functional $g$ under the approximating measures to the expectation of $g$ under the target measure for all $g\in M^0$ implies weak convergence of the measures of interest.
%
\begin{proposition}[Proposition 3.1 of \cite{functional_combinatorial}]\label{prop_m}
Suppose that, for each $n\geq 1$, the random element $\mathbf{Y}_n$ of $D$ is piecewise constant with intervals of constancy of length at least $r_n$. Let $\left(\mathbf{Z}_n\right)_{n\geq 1}$ be random elements of $D$ converging in distribution in $D$, with respect to the Skorokhod topology, to a random element $\mathbf{\mathbf{Z}}\in C\left([0,1],\mathbbm{R}\right)$. If:
\begin{equation}\label{assumption}
|\mathbbm{E}g(\mathbf{Y}_n)-\mathbbm{E}g(\mathbf{\mathbf{Z}}_n)|\leq C\mathscr{T}_n\|g\|_{M^0}
\end{equation}
for each $g\in M^0$ and if $\mathscr{T}_n\log^2(1/r_n)\xrightarrow{n\to\infty}0$, then the law of $\mathbf{Y}_n$ converges weakly to that of  $\mathbf{\mathbf{Z}}$ in $D$, in both the uniform and the Skorokhod topologies.
\end{proposition}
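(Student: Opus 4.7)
The plan is to reduce the statement to convergence of expectations against bounded Lipschitz functionals on $(D^p,\|\cdot\|)$, approximate these by smooth functionals in $M^0$, and then balance the smoothing error against \eqref{assumption}. Since $\mathbf{Z}\in C^p$ almost surely and Skorokhod and uniform topologies agree on $C^p$, the Portmanteau theorem implies that $\mathbf{Y}_n\Rightarrow\mathbf{Z}$ in either topology is equivalent to $\mathbbm{E}F(\mathbf{Y}_n)\to\mathbbm{E}F(\mathbf{Z})$ for every bounded, $1$-Lipschitz $F:D^p\to\mathbbm{R}$ in the uniform norm. The hypothesis $\mathbf{Z}_n\Rightarrow\mathbf{Z}$ in Skorokhod combined with $\mathbf{Z}\in C^p$ already yields $\mathbbm{E}F(\mathbf{Z}_n)\to\mathbbm{E}F(\mathbf{Z})$ for such $F$, so the task reduces to showing $|\mathbbm{E}F(\mathbf{Y}_n)-\mathbbm{E}F(\mathbf{Z}_n)|\to 0$.

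Since \eqref{assumption} applies only to $g\in M^0$, I would mollify $F$. Write $m_n=\lceil 1/r_n\rceil$, fix an interpolating partition of unity $\{\chi_i\}_{i=0}^{m_n}$ on $[0,1]$ with unit sup norm and supports of length $O(r_n)$, and define
$$F_\epsilon(w):=\mathbbm{E}\!\left[F\!\left(Pw+\epsilon\sum_{i=0}^{m_n}\chi_i Z_i\right)\right]\!,$$
where $(Pw)(t):=\sum_i\chi_i(t)w(i/m_n)$ and the $Z_i$ are i.i.d.\ standard Gaussian vectors in $\mathbbm{R}^p$ independent of $\mathbf{Y}_n$ and $\mathbf{Z}_n$. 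Two ingredients are needed: (i)~the smoothing bound $|\mathbbm{E}F(\mathbf{Y}_n)-\mathbbm{E}F_\epsilon(\mathbf{Y}_n)|\le C\epsilon\sqrt{\log(1/r_n)}$, obtained by using $P\mathbf{Y}_n=\mathbf{Y}_n$ (this is exactly where the assumption on intervals of constancy enters) and the standard Gaussian maximal inequality for the $p(m_n+1)$ coordinates, with a similar bound for $\mathbf{Z}_n$ after adding a vanishing modulus-of-continuity term $\mathbbm{E}\|P\mathbf{Z}_n-\mathbf{Z}_n\|$ controlled by the tightness of $\{\mathbf{Z}_n\}$ and continuity of the limit; and (ii)~Fr\'echet-derivative bounds for $F_\epsilon$ via Gaussian integration by parts in the $Z_i$ coordinates, yielding $F_\epsilon\in M^0$ with $\|F_\epsilon\|_{M^0}\le C\epsilon^{-k}$ for some fixed small $k$, with constants independent of $m_n$.

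Inserting $F_\epsilon$ into \eqref{assumption} and adding the smoothing errors yields
$$\bigl|\mathbbm{E}F(\mathbf{Y}_n)-\mathbbm{E}F(\mathbf{Z}_n)\bigr|\le C\epsilon\sqrt{\log(1/r_n)}+C\mathscr{T}_n\epsilon^{-k}+o(1),$$
and choosing $\epsilon_n\downarrow 0$ with $\epsilon_n^{k+1}\sim\mathscr{T}_n/\sqrt{\log(1/r_n)}$ drives the right-hand side to zero under the hypothesis $\mathscr{T}_n\log^2(1/r_n)\to 0$. The main obstacle is ingredient~(ii): verifying that the Malliavin-style bounds on all three derivative norms entering $\|\cdot\|_{M^0}$ (including the Lipschitz bound on $D^2 F_\epsilon$) depend only polynomially on $\epsilon^{-1}$ with constants \emph{independent of $m_n$}. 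This is where the piecewise-constant structure of $\mathbf{Y}_n$ is essential, since it allows the mollifier to live on the $m_n$-dimensional subspace spanned by the $\chi_i$ without changing the value of $F$ on $\mathbf{Y}_n$, while on $\mathbf{Z}_n$ the resulting discretisation error is absorbed by the Lipschitz continuity of $F$ together with the tightness of $\{\mathbf{Z}_n\}$.
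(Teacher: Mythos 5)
The paper does not actually prove Proposition \ref{prop_m}; it defers to the appendix of \cite{kasprzak2}, which adapts \cite[Proposition 3.1]{functional_combinatorial}. Your high-level strategy --- reduce to bounded Lipschitz test functionals, mollify them into $M^0$, and balance a smoothing error of order $\epsilon$ times a power of $\log(1/r_n)$ against $\mathscr{T}_n\|F_\epsilon\|_{M^0}$ --- is the same as in that proof. The gap is the one you flag yourself as ``the main obstacle'', and it is not a technical verification but a fatal defect of the proposed mollifier. If $F_\epsilon(w)=\mathbbm{E}\bigl[F\bigl(Pw+\epsilon\sum_{i=0}^{m_n}\chi_iZ_i\bigr)\bigr]$ with i.i.d.\ Gaussians $Z_i$, then Gaussian integration by parts in the $m_n+1$ coordinate directions produces, for a derivative in a general direction $h\in D^p$ with $\|h\|=1$, the factor $\bigl(\sum_{i=0}^{m_n}|h(i/m_n)|^2\bigr)^{1/2}$, which for $h$ controlled only in the supremum norm can be as large as $\sqrt{p(m_n+1)}\sim r_n^{-1/2}$. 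The resulting bounds are of the form $\|D^2F_\epsilon\|\lesssim\epsilon^{-1}r_n^{-1/2}$ and $\sup_{w,h}\|D^2F_\epsilon(w+h)-D^2F_\epsilon(w)\|/\|h\|\lesssim\epsilon^{-2}r_n^{-1}$, not $C\epsilon^{-k}$ with constants independent of $m_n$. Since the hypothesis only gives $\mathscr{T}_n\log^2(1/r_n)\to0$, any loss of a power of $1/r_n$ destroys the argument (take $r_n=1/n$ and $\mathscr{T}_n=n^{-1/2}$). Nor is this repairable for arbitrary bounded Lipschitz $F$: dimension-free bounds on second derivatives of Gaussian mollifications of Lipschitz functions in the $\ell^\infty$ geometry are false, essentially because the sup-norm unit ball is too large relative to the Cameron--Martin ball of the smoothing noise. (A secondary issue: $P\mathbf{Y}_n=\mathbf{Y}_n$ fails unless the partition is aligned with the actual intervals of constancy, whose endpoints are not assumed to lie on a fixed grid; this is fixable by choosing the partition adapted to $\mathbf{Y}_n$, but it is not automatic.)

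The cited proof circumvents the obstruction by never smoothing a general Lipschitz functional. It works with test functions that are smooth scalar functions of the supremum of the path over the $O(1/r_n)$ points determining the piecewise-constant process, with the maximum replaced by a smooth surrogate. For such functions the derivative in \emph{any} direction $h$ is controlled by $\|h\|$ through a convex combination of coordinates, so the $M^0$-norm is polynomial in the smoothing parameters with constants genuinely independent of the number of grid points; the price is an additive error of order $\log(1/r_n)$ in approximating the supremum, and it is the interplay of this error with the $\epsilon^{-k}$ growth of the $M^0$-norm that produces precisely the hypothesis $\mathscr{T}_n\log^2(1/r_n)\to0$. A useful sanity check: had your ingredient (ii) held, the optimisation you describe would require only $\mathscr{T}_n\log^{k/2}(1/r_n)\to0$ with a small $k$, i.e.\ a strictly weaker hypothesis than the one stated --- a warning sign that the claimed dimension-free derivative bounds cannot be correct as stated.
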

\section{Setting up Stein's method for the pre-limiting approximation}\label{section33}
The steps of the construction presented in this section will be similar to those used to set up Stein's method in \cite{diffusion} and \cite{kasprzak2}. After defining the process $\mathbf{D}_n$ whose distribution will be the target measure in Stein's method, we will construct a process $\left(\mathbf{W}_n(\cdot,u):u\geq 0\right)$ for which the target measure is stationary. We will then calculate its infinitesimal generator $\mathcal{A}_n$ and take it as our Stein operator. Next, we solve the Stein equation $\mathcal{A}_nf=g$ using the analysis of \cite{kasprzak} and prove some properties of the solution $f_n=\phi_n(g)$, with the most important one being that its second Fr\'echet derivative is Lipschitz.
\subsection{Target measure}
Let
\begin{equation}\label{d_n}
\mathbf{D}_n(t)=\sum_{i_1,\cdots,i_m=1}^{n}\tilde{Z}_{i_1,\cdots,i_m}J_{i_1,\cdots,i_m}(t),\quad t\in[0,1],
\end{equation}
where $\tilde{Z}_{i_1,\cdots,i_m}$'s are centred Gaussian and:
\begin{enumerate}[label=\Alph*)]
\item the covariance matrix $\Sigma_n\in\mathbbm{R}^{(n^m)\times(n^m)}$ of the vector $\tilde{Z}$ is positive definite, where $\tilde{Z}\in \mathbbm{R}^{(n^m)}$ is formed out of the $\tilde{Z}_{i_1,\cdots,i_m}$'s in such a way that they appear in the lexicographic order.
\item The collection $\left\lbrace J_{i_1,\cdots,i_m}\in D\left([0,1],\mathbbm{R}\right):i_1,\cdots,i_m\in\lbrace 1,\cdots, n\rbrace\right\rbrace$ is independent of the collection \\$\left\lbrace \tilde{Z}_{i_1,\cdots,i_m}:i_1,\cdots,i_m\in\lbrace 1,\cdots, n\rbrace\right\rbrace$. A typical example would be $J_{i_1,\cdots,i_m}=\mathbbm{1}_{A_{i_1,\cdots,i_m}}$ for some measurable set $A_{i_1,\cdots,i_m}$.
\end{enumerate}
\begin{remark}
It is worth noting that processes $\mathbf{D}_n$ taking the form (\ref{d_n}) often approximate interesting continuous Gaussian processes very well. An example is a Gaussian scaled random walk, i.e. $\mathbf{D}_n$ of (\ref{d_n}), where  all the $\tilde{Z}_{i_1,\cdots,i_m}$'s are standard normal and independent, $m=1$ and $J_i=\mathbbm{1}_{[i/n,1]}$ for all $i=1,\cdots,n$. It approximates Brownian Motion. By Proposition \ref{prop_m}, under several assumptions, proving by Stein's method that a piece-wise constant process $\mathbf{Y}_n$ is close enough to process $\mathbf{D}_n$ proves $\mathbf{Y}_n$'s convergence in law to the continuous process that $\mathbf{D}_n$ approximates.
\end{remark}

Now let $\lbrace (\mathscr{X}_{i_1,\cdots,i_m}(u),u\geq 0):i_1,\cdots,i_m=1,...,n\rbrace$ be an array of i.i.d. Ornstein-Uhlenbeck processes with stationary law $\mathcal{N}(0,1)$, independent of the $J_{i_1,\cdots,i_m}$'s. Consider $\tilde{\mathscr{U}}(u)=\left(\Sigma_n\right)^{1/2}\mathscr{X}(u)$, where $\mathscr{X}(u)\in\mathbbm{R}^{n^m}$ is formed out of the $\mathscr{X}_{i_1,\cdots,i_m}(u)$'s in such a way that they appear in the same order as the $\tilde{Z}_{i_1,\cdots,i_m}$'s appear in $\tilde{Z}$.
Write $\mathscr{U}_{i_1,\cdots,i_m}(u)=\left(\tilde{\mathscr{U}}(u)\right)_{I(i_1,\cdots,i_m)}$ using the bijection $I:\lbrace (i_1,\cdots,i_m):i_1,\cdots,i_m=1,\cdots, n\rbrace\to\lbrace 1,\cdots,n^m\rbrace$, given by:
\begin{equation}\label{i}
I(i_1,\cdots,i_m)=(i_1-1)n^{m-1}+\cdots+(i_{m-1}-1)n+i_m.
\end{equation}
Consider a process:
$$\mathbf{W}_n(t,u)=\sum_{i_1,\cdots,i_m=1}^{n}\mathscr{U}_{i_1,\cdots,i_m}(u)J_{i_1,\cdots,i_m}(t),\quad t\in[0,1],u\geq 0.$$
It is easy to see that the stationary law of the process $\left(\mathbf{W}_n(\cdot,u)\right)_{u\geq 0}$ is exactly the law of $\mathbf{D}_n$. 
\subsection{Stein equation}
By \cite[Propositions 4.1 and 4.4]{kasprzak2}, the following result is immediate:
\begin{proposition}\label{prop12.7}
The infinitesimal generator of the process $\left(\mathbf{W}_n(\cdot,u)\right)_{u\geq 0}$ acts on any $f\in M$ in the following way:
\begin{align*}
&\mathcal{A}_nf(w)=-Df(w)[w]+\mathbbm{E}D^2f(w)\left[\mathbf{D}_n,\mathbf{D}_n\right].
\end{align*}
Moreover, for any $g\in M$ such that $\mathbbm{E}g(\mathbf{D}_n)=0$, the Stein equation $\mathcal{A}_nf_n=g$ is solved by:
\begin{equation}\label{phi}
f_n=\phi_n(g)=-\int_0^{\infty}T_{n,u}gdu,
\end{equation}
where $(T_{n,u}f)(w)=\mathbbm{E}\left[f(we^{-u}+\sqrt{1-e^{-2u}}\mathbf{D}_n(\cdot)\right]$ Furthermore, for $g\in M$:
\begin{align}
\text{A)} \quad &\|D\phi_n(g)(w)\|\leq \|g\|_{ M}\left(1+\frac{2}{3}\|w\|^2+\frac{4}{3}\mathbbm{E}\|\mathbf{D}_n\|^2\right)\text{,}\nonumber\\
\text{B)} \quad &\|D^2\phi_n(g)(w)\|\leq \|g\|_{ M}\left(\frac{1}{2}+\frac{\|w\|}{3}+\frac{\mathbbm{E}\|\mathbf{D}_n\|}{3}\right)\text{,}\nonumber\\
\text{C)}\quad&\frac{\left\|D^2\phi_n(g)(w+h)-D^2\phi_n(g)(w)\right\|}{\|h\|}
\leq\sup_{w,h\in D}\frac{\|D^2(g+c)(w+h)-D^2(g+c)(w)\|}{3\|h\|},\nonumber
\end{align}
for any constant function $c:D\to\mathbbm{R}$ and for all $w,h\in D$.
Moreover, for all $g\in M^1$, as defined in (\ref{m_1}),
\begin{align}
\text{A)}\quad&\|D\phi_n(g)(w)\|\leq \|g\|_{ M^1},\nonumber\\
\text{B)}\quad&\|D^2\phi_n(g)(w)\|\leq \frac{1}{2}\|g\|_{ M^1}\nonumber
\end{align}
and for all $g\in M^2$, as defined in (\ref{m_2}),
\begin{align}
\|D\phi_n(g)(w)\|\leq \|g\|_{ M^2}\nonumber.
\end{align}
\end{proposition}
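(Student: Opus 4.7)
The plan is to recognise the process $(\mathbf{W}_n(\cdot,u))_{u\geq 0}$ as a particular instance of the construction treated in Propositions 4.1 and 4.4 of \cite{kasprzak2} and then invoke those propositions directly; the task therefore reduces to verifying that the present setup fits the abstract framework and to tracking how the derivative bounds translate. Concretely, the key structural observation is that $\mathbf{W}_n$ is a finite linear combination of the independent one-dimensional Ornstein-Uhlenbeck coordinates $\mathscr{X}^{(k)}_{i_1,\ldots,i_m}(u)$, linearly mixed by $(\Sigma_n)^{1/2}$, and then projected into $D^p$ through the deterministic (conditionally on the $J$'s) basis functions $J^{(k)}_{i_1,\ldots,i_m}(\cdot)\,e_k$. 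Since $(\Sigma_n)^{1/2}$ is invertible by assumption A), this is exactly the kind of functional OU process the cited propositions analyse.

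To obtain the generator, I would apply the chain rule for Fréchet derivatives to compose $f\in M$ with the linear lifting $x\mapsto \sum_{k,i_1,\ldots,i_m} x_{k,i_1,\ldots,i_m} J^{(k)}_{i_1,\ldots,i_m}(\cdot)\,e_k$, and then read off the action of the multivariate OU generator in the coordinates $\tilde{\mathscr{U}}$. The drift term contributes $-Df(w)[w]$, while the diffusion term contributes $\mathbb{E}\,D^2 f(w)[\mathbf{D}_n^{(2)}]$ by the identity $\sum_{k,\ell,i,j}(\Sigma_n)_{I(k,i),I(\ell,j)} J^{(k)}_{i}(\cdot)J^{(\ell)}_{j}(\cdot) = \mathbb{E}\,\mathbf{D}_n(\cdot)^{\otimes 2}$ (after conditioning on the $J$'s and using their independence from the $\tilde{Z}$'s), using the index bijection $I$ of \eqref{i}.

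For the Stein equation, I would exploit that $\mathbf{D}_n$ is the stationary distribution of $\mathbf{W}_n(\cdot,u)$, so that for any $g\in M$ with $\mathbb{E}g(\mathbf{D}_n)=0$, the map $u\mapsto T_{n,u}g(w)$ decays to $0$ at infinity. Differentiating and integrating the semigroup identity $\frac{d}{du}T_{n,u}g = \mathcal{A}_n T_{n,u}g$ from $0$ to $\infty$ yields $\mathcal{A}_n \phi_n(g) = g$ with the stated formula \eqref{phi}. The bounds (A), (B), (C) follow from differentiating under the expectation in $T_{n,u}g(w)=\mathbb{E}\,g\bigl(we^{-u}+\sqrt{1-e^{-2u}}\mathbf{D}_n\bigr)$: each Fréchet derivative in $w$ produces a factor $e^{-ku}$, and the elementary integrals $\int_0^\infty e^{-u}\,du$, $\int_0^\infty e^{-2u}\,du$, $\int_0^\infty e^{-3u}/(something)\,du$ produce the coefficients $1$, $1/2$, $1/3$ appearing in the statement. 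The refinements for $g\in M^1$ and $g\in M^2$ are obtained by substituting the sharper uniform bounds available in those subclasses.

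The main obstacle is simply the careful bookkeeping of how $(1+\|w\|^j)$-type growth factors combine with the contractive factors $e^{-ku}$ under the integral and how the covariance $\Sigma_n$ threads through the second-derivative term so that the expression $\mathbb{E}\,D^2 f(w)[\mathbf{D}_n^{(2)}]$ emerges cleanly; since \cite{kasprzak2} performs this bookkeeping in the same abstract setting, no new technical input is required and the result is indeed immediate once the correspondence has been checked.
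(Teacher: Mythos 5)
Your proposal follows exactly the route the paper takes: the paper gives no standalone proof and simply declares the result immediate from Propositions 4.1 and 4.4 of \cite{kasprzak2}, which is precisely the reduction you carry out (with a correct sketch of the generator computation, the semigroup integral representation, and the origin of the constants $1$, $1/2$, $1/3$). Your additional bookkeeping of how the $J^{(k)}_{i_1,\ldots,i_m}$'s and $\Sigma_n$ fit the abstract framework is the verification the paper leaves implicit, so the approaches coincide.
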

\section{An abstract approximation theorem}\label{section4}
We now present a theorem which provides an expression for a bound on the distance between some process $\mathbf{Y}_n$ and $\mathbf{D}_n$, defined by (\ref{d_n}), provided that we can find some $\mathbf{Y}_n'$ such that $(\mathbf{Y}_n,\mathbf{Y}_n')$ is an exchangeable pair satisfying an appropriate condition. 
\begin{theorem}\label{theorem1}
Assume that $(\mathbf{Y}_n,\mathbf{Y}_n')$ is an exchangeable pair of $D\left([0,1],\mathbbm{R}\right)$-valued random variables such that:
\begin{equation}
\mathbbm{E}^{\mathbf{Y}_n}Df(\mathbf{Y}_n)\left[\mathbf{Y}_n'-\mathbf{Y}_n\right]=-\lambda_nDf(\mathbf{Y}_n)[\mathbf{Y}_n]+R_f,
\label{condition}
\end{equation}
where $\mathbbm{E}^{\mathbf{Y}_n}[\cdot]:=\mathbbm{E}\left[\cdot|\mathbf{Y}_n\right]$, for all $f\in M$, some $\lambda_n> 0$ and some random variable $R_f=R_f(\mathbf{Y}_n)$.  Let $\mathbf{D}_n$ be defined by (\ref{d_n}). Then, for any $g\in M$:
\begin{align*}
\left|\mathbbm{E}g(\mathbf{Y}_n)-\mathbbm{E}g(\mathbf{D}_n)\right|\leq \epsilon_1+\epsilon_2+\epsilon_3
\end{align*}
where
\begin{align*}
\epsilon_1&=\frac{\|g\|_M}{12\lambda_n}\mathbbm{E}\left[\|\mathbf{Y}_n-\mathbf{Y}_n'\|^3\right],\\
\epsilon_2&=\left|\frac{1}{2\lambda_n}\mathbbm{E}D^2f(\mathbf{Y}_n)\left[\mathbf{Y}_n-\mathbf{Y}_n',\mathbf{Y}_n-\mathbf{Y}_n'\right]-\mathbbm{E}D^2f(\mathbf{Y}_n)\left[\mathbf{D}_n,\mathbf{D}_n\right]\right|,\\
\epsilon_3&=\frac{1}{\lambda_n}|\mathbbm{E}R_f|,
\end{align*}
and $f=\phi_n(g)$, as defined by (\ref{phi}).
\end{theorem}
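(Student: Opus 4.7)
I would follow the standard Stein's-method recipe, combining the generator representation from Proposition \ref{prop12.7} with the antisymmetrisation trick characteristic of the exchangeable-pair method. Since $g - \mathbbm{E}g(\mathbf{D}_n)$ differs from $g$ only by an additive constant, it still lies in $M$ and is centred with respect to $\mathbf{D}_n$, so Proposition \ref{prop12.7} produces $f = \phi_n\bigl(g - \mathbbm{E}g(\mathbf{D}_n)\bigr)\in M$ solving $\mathcal{A}_n f = g - \mathbbm{E}g(\mathbf{D}_n)$. Evaluating at $\mathbf{Y}_n$, taking expectations, and substituting the generator formula from Proposition \ref{prop12.7} yields
\begin{align*}
\mathbbm{E}g(\mathbf{Y}_n) - \mathbbm{E}g(\mathbf{D}_n) = -\mathbbm{E}Df(\mathbf{Y}_n)[\mathbf{Y}_n] + \mathbbm{E}D^2 f(\mathbf{Y}_n)[\mathbf{D}_n,\mathbf{D}_n],
\end{align*}
with $\mathbf{D}_n$ drawn independently of $\mathbf{Y}_n$ in the last term.

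Next I would apply hypothesis (\ref{condition}), which under expectation reads $\mathbbm{E}Df(\mathbf{Y}_n)[\mathbf{Y}_n] = 2\mathbbm{E}Df(\mathbf{Y}_n)[(\mathbf{Y}_n - \mathbf{Y}_n')\Lambda_n] + \mathbbm{E}R_f$, and then antisymmetrise. The functional $(y,y')\mapsto\bigl(Df(y) + Df(y')\bigr)[(y-y')\Lambda_n]$ flips sign under $(y,y')\leftrightarrow(y',y)$, so exchangeability of $(\mathbf{Y}_n,\mathbf{Y}_n')$ forces its expectation to vanish; hence
\begin{align*}
2\mathbbm{E}Df(\mathbf{Y}_n)[(\mathbf{Y}_n-\mathbf{Y}_n')\Lambda_n] = \mathbbm{E}\bigl(Df(\mathbf{Y}_n) - Df(\mathbf{Y}_n')\bigr)\bigl[(\mathbf{Y}_n-\mathbf{Y}_n')\Lambda_n\bigr].
\end{align*}
Taylor-expanding $Df(\mathbf{Y}_n')$ around $\mathbf{Y}_n$ to first order with integral remainder, and invoking the Lipschitz bound (\ref{space_m}) on $D^2 f$, I obtain
\begin{align*}
\bigl(Df(\mathbf{Y}_n') - Df(\mathbf{Y}_n)\bigr)[h] = D^2 f(\mathbf{Y}_n)[\mathbf{Y}_n' - \mathbf{Y}_n, h] + \mathcal{R}(h),\qquad |\mathcal{R}(h)|\leq \tfrac{k_f}{2}\|\mathbf{Y}_n - \mathbf{Y}_n'\|^2\|h\|.
\end{align*}

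Plugging $h = (\mathbf{Y}_n - \mathbf{Y}_n')\Lambda_n$ into this Taylor identity and assembling the pieces, the leading $D^2 f$ contribution (using the symmetry of $D^2 f$ together with the sign flip $\mathbf{Y}_n' - \mathbf{Y}_n = -(\mathbf{Y}_n - \mathbf{Y}_n')$) combines with $\mathbbm{E}D^2 f(\mathbf{Y}_n)[\mathbf{D}_n,\mathbf{D}_n]$ from the first step to yield exactly $\epsilon_2$ upon taking absolute values; the Taylor remainder $\mathcal{R}$ produces $\epsilon_1$; and $\mathbbm{E}R_f$ is $\epsilon_3$. A single application of the triangle inequality then gives the claim. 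The one step that requires careful bookkeeping is converting the $\tfrac12$ from the Taylor remainder into the $\tfrac16$ appearing in $\epsilon_1$: this relies on Proposition \ref{prop12.7}(C) to bound $k_f \leq \|g\|_M/3$, which in turn uses the observation that subtracting the constant $\mathbbm{E}g(\mathbf{D}_n)$ from $g$ leaves $D^2 g$, and hence the Lipschitz constant controlling $k_f$, untouched.
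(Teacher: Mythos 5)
Your proposal is correct and follows essentially the same route as the paper's proof: solve the Stein equation via Proposition \ref{prop12.7}, antisymmetrise using exchangeability to convert condition (\ref{condition}) into an identity for $\mathbbm{E}Df(\mathbf{Y}_n)[\mathbf{Y}_n]$, Taylor-expand the first-derivative difference with the Lipschitz bound on $D^2f$ from Proposition \ref{prop12.7}(C) supplying the factor $\|g\|_M/3$, and finish with the triangle inequality. The only (immaterial) difference is that you expand around $\mathbf{Y}_n$ where the paper expands around $\mathbf{Y}_n'$, two choices that agree in expectation by exchangeability.
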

\begin{remark}[Relevance of terms in the bound]
Term $\epsilon_1$ measures how close $\mathbf{Y}_n$ and $\mathbf{Y}_n'$ are and how \textit{large} $\lambda_n$ is. Term $\epsilon_2$ corresponds to the comparison of the covariance structure of $\mathbf{Y}_n-\mathbf{Y}_n'$ and $\mathbf{D}_n$. Estimating this term usually requires some effort yet is possible in several applications (see Theorem \ref{combi_pre_lim} and \ref{theorem_pre_limiting} below). Term $\epsilon_3$ measures the error in the exchangeable-pair linear regression condition (\ref{condition}).
\end{remark}
\begin{remark}
The term 
$$\left|\frac{1}{2\lambda_n}\mathbbm{E}D^2f(\mathbf{Y}_n)\left[\mathbf{Y}_n-\mathbf{Y}_n',\mathbf{Y}_n-\mathbf{Y}_n'\right]-\mathbbm{E}D^2f(\mathbf{Y}_n)\left[\mathbf{D}_n,\mathbf{D}_n\right]\right|$$ in the bound obtained in Theorem \ref{theorem1} is an analogue of the second condition in \cite[Theorem 3]{meckes09}. Therein, a bound on approximation by $\mathcal{N}(0,\Sigma)$ of a $d$-dimensional vector $X$ is obtained by constructing an exchangeable pair $(X,X')$ satisfying:
$$\mathbbm{E}^X[X'-X]=\Lambda X+E\quad\text{and}\quad\mathbbm{E}^X[(X'-X)(X'-X)^T]=2\Lambda\Sigma+E'$$
for some invertible matrix $\Lambda$ and some remainder terms $E$ and $E'$. In the same spirit, Theorem \ref{theorem1} could be rewritten to assume (\ref{condition}) and:
$$\mathbbm{E}^{\mathbf{Y}_n}D^2f(\mathbf{Y}_n)\left[\mathbf{Y}_n'-\mathbf{Y}_n,\mathbf{Y}_n'-\mathbf{Y}_n\right]=2\lambda_nD^2f(\mathbf{Y}_n)\left[\mathbf{D}_n,\mathbf{D}_n\right]+R^1_f.$$
The bound would then take the form:
\begin{align*}
\left|\mathbbm{E}g(\mathbf{Y}_n)-\mathbbm{E}g(\mathbf{D}_n)\right|\leq&\frac{\|g\|_M}{12\lambda_n}\mathbbm{E}\left[\|(\mathbf{Y}_n-\mathbf{Y}_n')\|^3\right]+\frac{1}{\lambda_n}|\mathbbm{E}R_f|+\frac{1}{2\lambda_n}|\mathbbm{E}R^1_f|.
\end{align*}
\end{remark}
\begin{remark}\label{uni_multi}
While Theorem \ref{theorem1} is formulated for univariate stochastic processes belonging to the Skorokhod space $D([0,1],\mathbbm{R})$, its proof below is valid for processes belonging to $D([0,1],\mathbbm{R}^p)$, for any $p\geq 1$. 

We only present the univariate case here since in the modern approach to multivariate Stein's method of exchangeable pairs, presented in \cite{reinert_roellin, reinert_roellin1, meckes09}, the exchangeable-pair condition for multivariate approximations is written not in terms of a real $\lambda_n$, as in (\ref{condition}), but in terms of a matrix-valued coefficient $\Lambda$, as in (\ref{lambda_matrix}).

While it is possible to rewrite condition (\ref{condition}) appropriately, in terms of a matrix $\Lambda_n$, prove a corresponding bound and use it in interesting applications, we think of it as a separate problem and leave it for future research.
\end{remark}
\begin{proof}[Proof of Theorem \ref{theorem1}]
Our aim is to bound $\left|\mathbbm{E}g(\mathbf{Y}_n)-\mathbbm{E}g(\mathbf{D}_n)\right|$ by bounding $\left|\mathbbm{E}\mathcal{A}_nf(\mathbf{Y}_n)\right|$, where $f$ is the solution to the Stein equation:
$$\mathcal{A}_nf=g-\mathbbm{E}g(\mathbf{D}_n),$$
for $\mathcal{A}_n$ defined in Proposition \ref{prop12.7}. Note that, by exchangeability of $(\mathbf{Y}_n,\mathbf{Y}_n')$ and (\ref{condition}):
\begin{align*}
0=&\mathbbm{E}\left(Df(\mathbf{Y}_n')+Df(\mathbf{Y}_n)\right)\left[\mathbf{Y}_n-\mathbf{Y}_n'\right]\\
=&\mathbbm{E}\left(Df(\mathbf{Y}_n')-Df(\mathbf{Y}_n)\right)\left[\mathbf{Y}_n-\mathbf{Y}_n'\right]+2\mathbbm{E}\left\lbrace\mathbbm{E}^{\mathbf{Y}_n}Df(\mathbf{Y}_n)\left[\mathbf{Y}_n-\mathbf{Y}_n'\right]\right\rbrace\\
=&\mathbbm{E}\left(Df(\mathbf{Y}_n')-Df(\mathbf{Y}_n)\right)\left[\mathbf{Y}_n-\mathbf{Y}_n'\right]+2\lambda_n\mathbbm{E}Df(\mathbf{Y}_n)[\mathbf{Y}_n]-2\mathbbm{E}R_f
\end{align*}
and so:
\begin{equation*}
\mathbbm{E}Df(\mathbf{Y}_n)[\mathbf{Y}_n]=\frac{1}{2\lambda_n}\left\lbrace\mathbbm{E}\left(Df(\mathbf{Y}_n)-Df(\mathbf{Y}_n')\right)\left[\mathbf{Y}_n-\mathbf{Y}_n'\right]+2\mathbbm{E}R_f\right\rbrace.
\end{equation*}
Therefore:
\begin{align}
\left|\mathbbm{E}\mathcal{A}_nf(\mathbf{Y}_n)\right|
=&\left|\mathbbm{E}Df(\mathbf{Y}_n)[\mathbf{Y}_n]-\mathbbm{E}D^2f(\mathbf{Y}_n)\left[\mathbf{D}_n,\mathbf{D}_n\right]\right|\nonumber\\
=&\left|\frac{1}{2\lambda_n}\mathbbm{E}\left(Df(\mathbf{Y}_n)-Df(\mathbf{Y}_n')\right)\left[\mathbf{Y}_n-\mathbf{Y}_n'\right]-\mathbbm{E}D^2f(\mathbf{Y}_n)\left[\mathbf{D}_n,\mathbf{D}_n\right]+\frac{1}{\lambda_n}\mathbbm{E}R_f\right|\nonumber\\
\leq& \left|\frac{1}{2\lambda_n}\mathbbm{E}\left(Df(\mathbf{Y}_n)-Df(\mathbf{Y}_n')\right)\left[\mathbf{Y}_n-\mathbf{Y}_n'\right]-\frac{1}{2\lambda_n}\mathbbm{E}D^2f(\mathbf{Y}_n')\left[\mathbf{Y}_n-\mathbf{Y}_n',\mathbf{Y}_n-\mathbf{Y}_n'\right]\right|\nonumber\\
&+\left|\frac{1}{2\lambda_n}\mathbbm{E}D^2f(\mathbf{Y}_n)\left[\mathbf{Y}_n-\mathbf{Y}_n',\mathbf{Y}_n-\mathbf{Y}_n'\right]-\mathbbm{E}D^2f(\mathbf{Y}_n)\left[\mathbf{D}_n,\mathbf{D}_n\right]\right|+\frac{1}{\lambda_n}|\mathbbm{E}R_f|\nonumber\\
\leq&\frac{\|g\|_M}{12\lambda_n}\mathbbm{E}\left[\|\mathbf{Y}_n-\mathbf{Y}_n'\|^3\right]+\frac{1}{\lambda_n}|\mathbbm{E}R_f|\nonumber\\
&+\left|\frac{1}{2\lambda_n}\mathbbm{E}D^2f(\mathbf{Y}_n)\left[\mathbf{Y}_n-\mathbf{Y}_n',\mathbf{Y}_n-\mathbf{Y}_n'\right]-\mathbbm{E}D^2f(\mathbf{Y}_n)\left[\mathbf{D}_n,\mathbf{D}_n\right]\right|,\nonumber
\end{align}
where the last inequality follows by Taylor's theorem and Proposition \ref{prop12.7}.
\end{proof}
\section{A Functional Combinatorial Central Limit Theorem}\label{section5}
In this section we consider a functional version of the result proved in \cite{chen_ho}. Our object of interest is a stochastic process represented by a scaled sum of independent random variables chosen from an $n\times n$ array. Only one random variable is picked from each row and for row $i$, the corresponding random variable is picked from column $\pi(i)$, where $\pi$ is a random permutation on $[n]=\lbrace 1,\cdots,n\rbrace$. Theorem \ref{combi_pre_lim} establishes a bound on the distance between this process and a pre-limiting process and Theorem \ref{conv_cont_theorem} shows convergence of this process, under certain assumptions, to a continuous Gaussian process.

Our analysis in this section is similar to that of \cite{functional_combinatorial}, where the summands in the scaled sums are chosen from a deterministic array. The authors therein also establish bounds on the approximation by a pre-limit Gaussian process and show convergence to a continuous Gaussian process. Furthermore, they establish a bound on the distance from the continuous Gaussian process for a restricted class of test functions. For random arrays the situation is more involved.

Our setup is analogous to the one considered in \cite{chen2015}, where a bound on the rate of convergence in the one-dimensional combinatorial central limit theorem is obtained using Stein's method of exchangeable pairs.
\subsection{Introduction}\label{intro_section}
Let $\mathbbm{X}=\lbrace X_{ij}:i,j\in[n]\rbrace$, where $[n]=\lbrace1,2,\cdots,n\rbrace$, be an $n\times n$ array of independent $\mathbbm{R}$-valued random variables, where $n\geq 2$, $\mathbbm{E}X_{ij}=c_{ij}$, $\text{Var} X_{ij}=\sigma_{ij}^2\geq 0$ and $\mathbbm{E}|X_{ij}|^3<\infty$. Suppose that $c_{i\cdot}=c_{\cdot j}=0$ where $c_{i\cdot}=\sum_{j=1}^n\frac{c_{ij}}{n}=\mathbbm{E}X_{i\pi(i)}$, $c_{\cdot j}=\sum_{i=1}^n\frac{c_{ij}}{n}$. Let $\pi$ be a uniform random permutation of $[n]$, independent of $\mathbbm{X}$ and for
\begin{equation}\label{s_n}
s_n^2=\frac{1}{n}\sum_{i,j=1}^n \sigma_{ij}^2+\frac{1}{n-1}\sum_{i,j=1}^n c_{ij}^2.
\end{equation}
let
$$\mathbf{Y}_n(t)=\frac{1}{s_n}\sum_{i=1}^{\lfloor nt\rfloor}X_{i\pi(i)}=\frac{1}{s_n}\sum_{i=1}^nX_{i\pi(i)}\mathbbm{1}_{[i/n,1]}(t),\quad t\in[0,1].$$

We note that $s_n^2=\text{Var}\left[\sum_{i=1}^nX_{i\pi(i)}\right]$, by the first part of \cite[Theorem 1.1]{chen2015}. 
The process $\mathbf{Y}_n$ is similar to the process $Y$ considered in \cite{functional_combinatorial} and defined by (1.4) therein with the most important difference being that we allow the $X_{i,j}$'s to be random, whereas the authors in \cite{functional_combinatorial} assumed them to be deterministic. Bounds on the distance between one-dimensional distributions of $\mathbf{Y}_n$ and a normal distribution have been obtained via Stein's method in \cite[Theorem 1.1]{chen2015}.
\subsection{Exchangeable pair setup}\label{ex_section}
Select uniformly at random two different indices $I,J\in [n]$ and let:
$$\mathbf{Y}_n'=\mathbf{Y}_n-\frac{1}{s_n}X_{I\pi(I)}\mathbbm{1}_{[I/n,1]}-\frac{1}{s_n}X_{J\pi(J)}\mathbbm{1}_{[J/n,1]}+\frac{1}{s_n}X_{I\pi(J)}\mathbbm{1}_{[I/n,1]}+\frac{1}{s_n}X_{J\pi(I)}\mathbbm{1}_{[J/n,1]}.$$
Note that $(\mathbf{Y}_n,\mathbf{Y}_n')$ is an exchangeable pair and that for all $f\in M$:
\begin{align*}
&\mathbbm{E}^{\mathbf{Y}_n}\left\lbrace Df(\mathbf{Y}_n)\left[\mathbf{Y}_n-\mathbf{Y}_n'\right]\right\rbrace\\
=&\frac{1}{s_n}\mathbbm{E}^{\mathbf{Y}_n}\left\lbrace Df(\mathbf{Y}_n)\left[X_{I\pi(I)}\mathbbm{1}_{[I/n,1]}+X_{J\pi(J)}\mathbbm{1}_{[J/n,1]}-X_{I\pi(J)}\mathbbm{1}_{[I/n,1]}-X_{J\pi(I)}\mathbbm{1}_{[J/n,1]}\right]\right\rbrace\\
=&\frac{1}{n(n-1)s_n}\sum_{i, j=1}^n\mathbbm{E}^{\mathbf{Y}_n}\left\lbrace Df(\mathbf{Y}_n)\left[X_{i\pi(i)}\mathbbm{1}_{[i/n,1]}+X_{j\pi(j)}\mathbbm{1}_{[j/n,1]}-X_{i\pi(j)}\mathbbm{1}_{[i/n,1]}-X_{j\pi(i)}\mathbbm{1}_{[j/n,1]}\right]\right\rbrace\\
=&\frac{2}{n-1}Df(\mathbf{Y}_n)[\mathbf{Y}_n]-\frac{2}{n(n-1)s_n}\sum_{i,j=1}^n\mathbbm{E}^{\mathbf{Y}_n}Df(\mathbf{Y}_n)\left[X_{i,\pi(j)}\mathbbm{1}_{[i/n,1]}\right]. 
\end{align*}

Therefore:
\begin{align}\label{ex_condition}
\mathbbm{E}^{\mathbf{Y}_n}\left\lbrace Df(\mathbf{Y}_n)[\mathbf{Y}_n'-\mathbf{Y}_n]\right\rbrace=&-\frac{2}{n-1}Df(\mathbf{Y}_n)\left[\mathbf{Y}_n\right]+\frac{2}{n(n-1)s_n}\sum_{i,j=1}^nDf(\mathbf{Y}_n)\left[\mathbbm{E}^{\mathbf{Y}_n}[X_{i,\pi(j)}]\mathbbm{1}_{[i/n,1]}\right].
\end{align}
So condition (\ref{condition}) is satisfied with
$$\lambda_n=\frac{2}{n-1}\quad\text{and}\quad R_f=\frac{2}{n(n-1)s_n}\sum_{i,j=1}^nDf(\mathbf{Y}_n)\left[\mathbbm{E}^{\mathbf{Y}_n}[X_{i,\pi(j)}]\mathbbm{1}_{[i/n,1]}\right].$$

\subsection{Pre-limiting process}\label{section_pre_lim}
Now let $\hat{Z}_i=\frac{1}{\sqrt{n-1}}\sum_{l=1}^nX_{il}''\left(Z_{il}-\frac{1}{n}\sum_{j=1}^nZ_{jl}\right)$, for $\mathbbm{X}''=\lbrace X_{ij}'':i,j\in[n]\rbrace$ being an independent copy of $\mathbbm{X}$ and $Z_{il}$'s i.i.d. standard normal, independent of  $\mathbbm{X}\cup\mathbbm{X}'.$ Then, let
\begin{equation}\label{pre_lim_an}
\mathbf{D}_n(t)=\frac{1}{s_n}\sum_{i=1}^{\lfloor nt\rfloor}\hat{Z}_i,\quad t\in[0,1].
\end{equation}
We will compare the distribution of $\mathbf{Y}_n$ with the distribution of $\mathbf{D}_n$. $\mathbf{D}_n$ is a conceptually easy process with the same covariance structure as $\mathbf{Y}_n$. It is constructed in a way similar to the process in \cite[(3.13)]{functional_combinatorial}.
Note that $\hat{Z}_i$ has mean zero for all $i$ and
\begin{align}
\mathbbm{E}\hat{Z}_i^2=&\frac{1}{n-1}\sum_{l=1}^n\mathbbm{E}\left[X_{il}^2\right]\mathbbm{E}\left[\left(Z_{il}-\frac{1}{n}\sum_{j=1}^nZ_{jl}\right)^2\right]\nonumber\\
&+\frac{1}{n-1}\sum_{1\leq l\neq k\leq n}\mathbbm{E}\left[X_{il}X_{ik}\right]\mathbbm{E}\left[\left(Z_{il}-\frac{1}{n}\sum_{j=1}^nZ_{jl}\right)\left(Z_{ik}-\frac{1}{n}\sum_{j=1}^nZ_{jk}\right)\right]\nonumber\\
=&\frac{1}{n-1}\sum_{l=1}^n\mathbbm{E}\left[X_{il}^2\right]\left(1-\frac{2}{n}+\frac{1}{n}\right)\nonumber\\
=&\frac{1}{n}\sum_{l=1}^n\mathbbm{E}X_{il}^2\nonumber\\
=&\frac{1}{2n^2}\left(2(n-1)\sum_{l=1}^n\mathbbm{E}X_{il}^2+2\sum_{r=1}^n\mathbbm{E}X_{ir}^2\right)\nonumber\\
=&\frac{1}{2n^2}\left(\sum_{1\leq k\neq l\leq n} \mathbbm{E}\left[\left(X_{ik}-X_{il}\right)^2\right]+2\sum_{1\leq k\neq l\leq n}\mathbbm{E}X_{ik}\mathbbm{E}X_{il}+2\sum_{r=1}^n\mathbbm{E}X_{ir}^2\right)\nonumber\\
=&\frac{1}{2n^2}\left(\sum_{1\leq k\neq l\leq n} \mathbbm{E}\left[\left(X_{ik}-X_{il}\right)^2\right]+2\sum_{r=1}^n\sigma_{ir}^2\right)\label{e_zi}
\end{align}
as $c_{i\cdot}=0$, and, for $i\neq j$,
\begin{align}
\mathbbm{E}\hat{Z}_i\hat{Z}_j=&\frac{1}{n-1}\sum_{k,l=1}^n\mathbbm{E}(X_{ik}X_{jl})\mathbbm{E}\left[\left(Z_{ik}-\frac{1}{n}\sum_{r=1}^nZ_{rk}\right)\left(Z_{jl}-\frac{1}{n}\sum_{r=1}^nZ_{rl}\right)\right]\nonumber\\
=&-\frac{1}{n(n-1)}\sum_{k=1}^nc_{ik}c_{jk}\nonumber\\
=&\frac{1}{2n^2(n-1)}\left(2\sum_{k=1}^n\left(-\mathbbm{E}X_{ik}\right)\mathbbm{E}X_{jk}-2(n-1)\sum_{k=1}^n\mathbbm{E}X_{ik}\mathbbm{E}X_{jk}\right)\nonumber\\
=&\frac{1}{2n^2(n-1)}\left(2\sum_{1\leq k\neq l\leq n}\mathbbm{E}X_{il}\mathbbm{E}X_{jk}-2\sum_{1\leq k\neq l\leq n}\mathbbm{E}X_{ik}\mathbbm{E}X_{jk}\right)\nonumber\\
=&\frac{1}{2n^2(n-1)}\sum_{1\leq k\neq l\leq n}\mathbbm{E}(X_{ik}-X_{il})(X_{jl}-X_{jk}).\label{e_zi_zj}
\end{align}
\subsection{Pre-limiting approximation}
We have the following theorem, comparing the distribution of $\mathbf{Y}_n$ and $\mathbf{D}_n$:
\begin{theorem}\label{combi_pre_lim}
For $\mathbf{Y}_n$ defined in Subsection \ref{intro_section}, $\mathbf{D}_n$ defined in Subsection \ref{section_pre_lim} and any $g\in M^1$, as defined in (\ref{m_1}),
\begin{align*}
&\left|\mathbbm{E}g(\mathbf{Y}_n)-\mathbbm{E}g(\mathbf{D}_n)\right|\\
\leq&\frac{\|g\|_{M^1}}{2n^2(n-1)^2s_n^3}\sum_{1\leq i,j,k,l,u\leq n}\left\lbrace\vphantom{\sum_i^j} \mathbbm{E}|X_{ik}|^3+5\mathbbm{E}|X_{ik}|^2\mathbbm{E}|X_{il}|+9\mathbbm{E}|X_{ik}|^2\mathbbm{E}|X_{jl}|+5\mathbbm{E}|X_{ik}|^2\mathbbm{E}|X_{jk}|\right.\nonumber\\
&+14\mathbbm{E}|X_{ik}|\mathbbm{E}|X_{il}|\mathbbm{E}|X_{jl}| +2\mathbbm{E}|X_{ik}|\mathbbm{E}|X_{il}|\mathbbm{E}|X_{iu}|+6\mathbbm{E}|X_{ik}|\mathbbm{E}|X_{jl}|\mathbbm{E}|X_{iu}|+2\mathbbm{E}[|X_{uk}||X_{ik}|^2]\nonumber\\
&+4\mathbbm{E}|X_{uk}X_{ik}|\mathbbm{E}|X_{il}|+4\mathbbm{E}|X_{uk}X_{ik}|\mathbbm{E}|X_{jl}|+8\mathbbm{E}|X_{uk}|\mathbbm{E}|X_{ik}|\mathbbm{E}|X_{jl}|+2\mathbbm{E}|X_{uk}X_{ik}X_{jk}|\\
&\left.+2\mathbbm{E}|X_{uk}|\mathbbm{E}|X_{ik}|\mathbbm{E}|X_{jk}|+\frac{2}{n}\mathbbm{E}\left(2\left|X_{ik}\right|+6\left|X_{jl}\right|\right)\cdot\left(\sum_{r=1}^n\mathbbm{E}|X_{ir}|^2+\left|\sum_{r=1}^nc_{ir}c_{jr}\right|\right)\right\rbrace.\nonumber\\
&+\frac{2\|g\|_{M^1}}{\sqrt{n}}+\frac{\|g\|_{M^1}}{n^2s_n^2}\sum_{i,j=1}^n\sigma_{ij}^2.
\end{align*}
\end{theorem}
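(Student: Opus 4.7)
The plan is to invoke Theorem~\ref{theorem1} at the exchangeable pair $(\mathbf{Y}_n,\mathbf{Y}_n')$ from Subsection~\ref{ex_section}. Identity~(\ref{ex_condition}) verifies condition~(\ref{condition}) with $\Lambda_n=(n-1)/4$ (a scalar, acting as ordinary multiplication) and
$$R_f=\frac{1}{ns_n}\sum_{i,j=1}^n Df(\mathbf{Y}_n)\bigl[\mathbbm{E}^{\mathbf{Y}_n}[X_{i,\pi(j)}]\mathbbm{1}_{[i/n,1]}\bigr],$$
reducing the problem to bounding the three terms $\epsilon_1,\epsilon_2,\epsilon_3$ in Theorem~\ref{theorem1} for $f=\phi_n(g)$. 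Since $g\in M^1$, the refined estimates in Proposition~\ref{prop12.7} yield $\|Df\|\leq\|g\|_{M^1}$ and $\|D^2f\|\leq\tfrac12\|g\|_{M^1}$; and $\|g\|_{M}\leq\|g\|_{M^1}$ allows the prefactor in $\epsilon_1$ to be replaced accordingly.

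For $\epsilon_1$ I start from the pointwise inequality
$$\|\mathbf{Y}_n-\mathbf{Y}_n'\|\leq\frac{1}{s_n}\bigl(|X_{I\pi(I)}|+|X_{J\pi(J)}|+|X_{I\pi(J)}|+|X_{J\pi(I)}|\bigr)$$
and expand the cube into its $20$ monomials. The pair $(I,J)$ is uniform over ordered pairs $i\neq j$ and, conditionally on $(I,J)=(i,j)$, the values $(\pi(i),\pi(j))$ are uniform over ordered pairs $a\neq b$; combined with the independence of the entries of $\mathbbm{X}$, each monomial gives a sum of products of two or three of the moments $\mathbbm{E}|X_{\cdot,\cdot}|^r$. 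Multiplying by $\tfrac{(n-1)\|g\|_{M^1}}{24s_n^3}$ and re-indexing to the quintuple sum (the extra indices $u,\ldots$ absorbing combinatorial multiplicity factors such as $n$ or $(n-1)^2$) produces precisely the first nine families of summands in the large sum in the statement.

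For $\epsilon_2$ I expand both bilinear expressions in the step functions $\mathbbm{1}_{[i/n,1]}$. Since $\mathbf{D}_n$ is independent of $\mathbf{Y}_n$,
$$\mathbbm{E}\,D^2f(\mathbf{Y}_n)[\mathbf{D}_n,\mathbf{D}_n]=\frac{1}{s_n^2}\sum_{i,i'=1}^{n}\mathbbm{E}[\hat Z_i\hat Z_{i'}]\cdot\mathbbm{E}\,D^2f(\mathbf{Y}_n)\bigl[\mathbbm{1}_{[i/n,1]},\mathbbm{1}_{[i'/n,1]}\bigr],$$
while the analogous expansion of $\tfrac{n-1}{4}D^2f(\mathbf{Y}_n)[\mathbf{Y}_n-\mathbf{Y}_n',\mathbf{Y}_n-\mathbf{Y}_n']$ produces a double sum over the four summands of $\mathbf{Y}_n-\mathbf{Y}_n'$, weighted by conditional expectations of bilinear products of the $X_{\cdot,\pi(\cdot)}$'s. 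The computations (\ref{e_zi})--(\ref{e_zi_zj}) were designed exactly so that $\mathbbm{E}[\hat Z_i\hat Z_{i'}]$ equals the \emph{unconditional} expectation of $\tfrac{n-1}{4s_n^2}$ times the corresponding coefficient on the exchangeable-pair side, so after subtraction only the gap between the conditional and unconditional averages of those coefficients remains. Controlling this gap by inserting absolute values inside the conditional expectations and using Cauchy--Schwarz or Jensen generates both the residual $\sigma_{ir}^2$ and $|c_{ir}c_{jr}|$ contributions inside the quintuple sum and the $\tfrac{4\|g\|_{M^1}}{3ns_n^2}\sum_{i,j}\sigma_{ij}^2$ term.

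For $\epsilon_3=|\mathbbm{E}R_f|$ I exploit that $\pi$ is a bijection, so that $\sum_{j=1}^n X_{i,\pi(j)}=\sum_{k=1}^n X_{ik}$ is deterministic in $\pi$. Hence $R_f=\tfrac{1}{ns_n}Df(\mathbf{Y}_n)[h]$ with $h(t)=\sum_{i=1}^{\lfloor nt\rfloor}\mathbbm{E}^{\mathbf{Y}_n}\bigl[\sum_{k=1}^n X_{ik}\bigr]$, and using $\|Df\|\leq\|g\|_{M^1}$ together with Jensen's inequality,
$$|\mathbbm{E}R_f|\leq\frac{\|g\|_{M^1}}{ns_n}\mathbbm{E}\max_{1\leq m\leq n}\Bigl|\sum_{i=1}^{m}\sum_{k=1}^n X_{ik}\Bigr|\leq\frac{2\|g\|_{M^1}}{ns_n}\sqrt{\sum_{i,k}\sigma_{ik}^2},$$
by Doob's $L^2$ maximal inequality applied to the row-sum martingale (mean zero because $c_{i\cdot}=0$ for all $i$). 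The inequality $s_n^2\geq\tfrac1n\sum_{i,k}\sigma_{ik}^2$ from (\ref{s_n}) then yields $|\mathbbm{E}R_f|\leq\tfrac{2\|g\|_{M^1}}{\sqrt n}$, which is the second remainder term in the statement. The hard part will be the $\epsilon_2$ bookkeeping: tracking which of the sixteen bilinear cross-terms exactly cancel against which of the $\mathbbm{E}[\hat Z_i\hat Z_{i'}]$ coefficients, and which produce the $\sigma^2$ and $c\cdot c$ residues; the cube-expansion in $\epsilon_1$, although lengthy, is purely mechanical.
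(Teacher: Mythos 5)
Your overall skeleton (apply Theorem \ref{theorem1} with $\Lambda_n=(n-1)/4$ and the $R_f$ from (\ref{ex_condition}), then bound $\epsilon_1,\epsilon_2,\epsilon_3$) matches the paper, and your $\epsilon_3$ argument is exactly the paper's (Doob's $L^2$ inequality on the row sums plus $s_n^2\geq\tfrac1n\sum_{i,k}\sigma_{ik}^2$). But your treatment of $\epsilon_2$ has a genuine gap. The coefficient $(X_{i\pi(i)}-X_{i\pi(j)})^2$ is \emph{not} independent of $D^2f(\mathbf{Y}_n)$ (it involves a summand of $\mathbf{Y}_n$), so after conditioning on $\pi(i)=k,\pi(j)=l$ you are left with terms of the form $\mathbbm{E}\{(W-a)\,V\}$ with $V=D^2f(\cdot)[\cdot,\cdot]$ correlated with $W=(X_{ik}-X_{il})^2/2n$ and $a=\mathbbm{E}\hat Z_i^2/(n-1)$. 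Your proposed mechanism --- ``inserting absolute values inside the conditional expectations and using Cauchy--Schwarz or Jensen'' --- bounds this by $\|V\|_\infty\,\mathbbm{E}|W-a|$, and $\mathbbm{E}|W-a|$ is of order $n^{-1}$ \emph{per quadruple} $(i,j,k,l)$ even though $\mathbbm{E}W-a$ is much smaller; summing $n^4$ such terms against the prefactor $\tfrac{1}{n(n-1)s_n^2}\sim n^{-3}$ gives an $O(1)$ contribution, not a vanishing one. The cancellation here is between expectations, not pointwise, and you cannot put absolute values inside before exploiting it. The paper's proof closes this gap with a decoupling construction that is absent from your plan: the coupled permutation $\pi_{ijkl}$ with $\mathcal{L}(\pi_{ijkl})=\mathcal{L}(\pi\mid\pi(i)=k,\pi(j)=l)$ and the partially resampled array $\mathbbm{X}^{ijkl}$, producing $\mathbf{Y}_{n,ijkl}$ and a process $\mathbf{Y}_n^{ijkl}$ that is \emph{independent} of $\{X_{ik},X_{il},X_{jk},X_{jl}\}$. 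One then splits $\epsilon_2\le A+B$: in $B$ the independence lets the expectations factorize so that the covariance matching (\ref{e_zi})--(\ref{e_zi_zj}) produces only the small $\tfrac{\|g\|_{M^1}}{n^2s_n^2}\sum\sigma_{ij}^2$ residue, while $A$ is controlled by the Lipschitz property of $D^2f$ together with $\|\mathbf{Y}_{n,ijkl}-\mathbf{Y}_n^{ijkl}\|$.

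A secondary (non-fatal) misattribution: the nine families of triple moment products in the statement do not come from expanding the cube in $\epsilon_1$. The paper bounds $\epsilon_1$ crudely by $\tfrac{4\|g\|_{M^1}}{3ns_n^3}\sum_{i,j}\mathbbm{E}|X_{ij}|^3$, which is absorbed into the coefficient $3$ of $\mathbbm{E}|X_{ik}|^3$; the nine families arise from the term $A$ above, via the bound on $\|\mathbf{Y}_{n,ijkl}-\mathbf{Y}_n^{ijkl}\|$ in terms of $|X^{ijkl}_{i,\pi(i)}|,|X^{ijkl}_{\pi^{-1}(k),k}|$, etc. This is also where the fifth summation index $u$ comes from (it ranges over the random values $\pi(i),\pi(j),\pi^{-1}(k),\pi^{-1}(l)$); a cube expansion of $\|\mathbf{Y}_n-\mathbf{Y}_n'\|^3$ involves only the four indices $i,j,k,l$ and cannot generate the $u$-indexed families, so the ``extra indices absorbing multiplicity factors'' step does not go through as described.
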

\begin{remark}[Relevance of terms in the bound]
The first long sum in the bound corresponds to $\epsilon_1$ and (to a large extent) $\epsilon_2$ of Theorem \ref{theorem1}. It represents the usual Berry-Esseen third moment estimate arising as a result of applying Taylor's theorem. Term $\frac{2\|g\|_{M^1}}{\sqrt{n}} $ also comes from the estimation of $\epsilon_2$. The last term corresponds to $\epsilon_3$.                                                      
\end{remark}
\begin{remark}
Assuming that $s_n=O(\sqrt{n})$, we obtain that the bound in Theorem \ref{combi_pre_lim} is of order $\frac{1}{\sqrt{n}}$.
\end{remark}
\begin{remark}
If we assume that $\mathbbm{E}|X_{ik}|^3\leq \beta_3$ for all $i,k=1,\cdots, n$ then the bound simplifies in the following way
\begin{align*}
&\left|\mathbbm{E}g(\mathbf{Y}_n)-\mathbbm{E}g(\mathbf{D}_n)\right|
\leq\|g\|_{M^1}\left(\frac{40\beta_3n^2}{(n-1)s_n^3}+\frac{8\beta_3^{1/3}}{n(n-1)s_n^3}\left|\sum_{i,j,r=1}^nc_{ir}c_{jr}\right|+\frac{2}{\sqrt{n}}+\frac{1}{n^2s_n^2}\sum_{i,j=1}^n\sigma_{ij}^2\right).
\end{align*}

\end{remark}
We will use Theorem \ref{theorem1} to prove Theorem \ref{combi_pre_lim}. In the proof, in \textbf{Step 1}, we justify why Theorem \ref{theorem1} may indeed be used in this case. In other words, we check that $\mathbf{D}_n$ of (\ref{pre_lim_an}) satisfies the conditions $\mathbf{D}_n$ of Theorem \ref{theorem1} is supposed to satisfy and that the exchangeable-pair condition for  $\mathbf{Y}_n$ holds.  In \textbf{Step 2} we bound terms $\epsilon_1$ and $\epsilon_3$ coming from Theorem \ref{theorem1}. This is relatively straightforward due to the $\mathbf{Y}_n$ and $\mathbf{Y}_n'$ of Subsection \ref{ex_section} being constructed in such a way that they are close to each other and $R_f$ of the same subsection being small.  Then, in \textbf{Step 3}, we treat the remaining term using a strategy analogous to that of the proof of \cite[Theorem 2.1]{functional_combinatorial}. The strategy is based on Taylor's expansions and considering copies of $\mathbf{Y}_n$ which are independent of some of the summands in $\mathbf{Y}_n$. Finally,  we combine the estimates obtained in the previous steps to obtain the assertion.

\begin{proof}[Proof of theorem \ref{combi_pre_lim}]
We adopt the notation of Subsections \ref{intro_section}, \ref{ex_section} and \ref{section_pre_lim}. Furthermore, we fix a function $g\in M^1$, as defined in (\ref{m_1}) and let $f=\phi_n(g)$, a solution to the Stein equation for $\mathbf{D}_n$, as defined in (\ref{phi}).

\textbf{Step 1.}
We note that $\mathbf{D}_n$ can be expressed in the following way:
$$\mathbf{D}_n=\sum_{i,l=1}^n\left(Z_{il}-\frac{1}{n}\sum_{j=1}^nZ_{jl}\right)J_{i,l},\quad\text{where}\quad J_{i,l}(t)=\frac{X_{il}''}{s_n\sqrt{n-1}}\mathbbm{1}_{[i/n,1]}(t), $$
which, together with (\ref{ex_condition}), lets us apply Theorem \ref{theorem1}.

\textbf{Step 2.}
For the first term in Theorem \ref{theorem1}, for any $g\in M^1$:
$$\epsilon_1=\frac{\|g\|_{M^1}}{12\lambda_n}\mathbbm{E}\|\mathbf{Y}_n-\mathbf{Y}_n'\|^3=\frac{(n-1)\|g\|_{M^1}}{24}\mathbbm{E}\|\mathbf{Y}_n-\mathbf{Y}_n'\|^3.$$
We note that:
\begin{align*}
\mathbbm{E}\|\mathbf{Y}_n-\mathbf{Y}_n'\|^3&\leq\frac{16}{s_n^3}\left(\mathbbm{E}|X_{I\pi(I)}|^3+\mathbbm{E}|X_{J\pi(J)}|^3+\mathbbm{E}|X_{I\pi(J)}|^3+\mathbbm{E}|X_{J\pi(I)}|^3\right)\nonumber\\
&=\frac{16}{n(n-1)s_n^3}\sum_{i\neq j}\left(\mathbbm{E}|X_{i\pi(i)}|^3+\mathbbm{E}|X_{j\pi(j)}|^3+\mathbbm{E}|X_{i\pi(j)}|^3+\mathbbm{E}|X_{j\pi(i)}|^3\right)\nonumber\\
&=\frac{32}{n(n-1)s_n^3}\sum_{i\neq j}\left(\mathbbm{E}|X_{i\pi(i)}|^3+\mathbbm{E}|X_{i\pi(j)}|^3\right)\nonumber\\
&=\frac{64}{n^2s_n^3}\sum_{i,j=1}^n\mathbbm{E}|X_{ij}|^3.
\end{align*}
Hence,
\begin{equation}\label{44.2}
\epsilon_1\leq \frac{8\|g\|_{M^1}}{3ns_n^3}\sum_{i,j=1}^n\mathbbm{E}|X_{ij}|^3.
\end{equation}
Furthermore, by Proposition \ref{prop12.7}:
\begin{align}
\epsilon_3=\left|\frac{1}{ns_n}\sum_{i,j=1}^n\mathbbm{E}Df(\mathbf{Y}_n)\left[X_{i\pi(j)}\mathbbm{1}_{[i/n,1]}\right]\right|=&\left|\frac{1}{ns_n}\sum_{i,j=1}^n\mathbbm{E}Df(\mathbf{Y}_n)\left[X_{ij}\mathbbm{1}_{[i/n,1]}\right]\right|\nonumber\\
\leq&\|g\|_{M^1}\frac{1}{ns_n}\mathbbm{E}\left\|\sum_{i,j=1}^n X_{ij}\mathbbm{1}_{[i/n,1]}\right\|\nonumber\\
\leq&\frac{2\|g\|_{M^1}}{ns_n}\sqrt{\mathbbm{E}\left|\sum_{i,j=1}^n X_{ij}\right|^2}\nonumber\\
\leq&\frac{2\|g\|_{M^1}}{ns_n}\sqrt{\sum_{i,j=1}^n\sigma_{ij}^2}\nonumber\\
\leq&\frac{2\|g\|_{M^1}}{\sqrt{n}},\label{44.3}
\end{align}
where we have used Doob's $L^2$ inequality in the second inequality and (\ref{s_n}) in the last one.

\textbf{Step 3.}
We will follow a strategy similar to that of \cite{chen2015}.
Define a new permutation $\pi_{ijkl}$ coupled with $\pi$, such that:
$$\mathcal{L}(\pi_{ijkl})=\mathcal{L}(\pi|\pi(i)=k,\pi(j)=l),$$
where $\mathcal{L}(\cdot)$ denotes the law.
As noted in \cite[(3.14)]{chen2015}, we can construct it in the following way. For $\tau_{ij}$ denoting the transposition of $i,j$:
$$\pi_{ijkl}=\begin{cases}
\pi,&\text{if }l=\pi(j),k=\pi(i)\\
\pi\cdot\tau_{\pi^{-1}(k),i},&\text{if }l=\pi(j),k\neq \pi(i)\\
\pi\cdot\tau_{\pi^{-1}(l),j},&\text{if }l\neq\pi(j),k=\pi(i)\\
\pi\cdot\tau_{\pi^{-1}(l),i}\cdot\tau_{\pi^{-1}(k),j}\cdot\tau_{ij},&\text{if }l\neq\pi(j),k\neq\pi(i).
\end{cases}$$
We also let 
$$\mathbf{Y}_{n,ijkl}=\frac{1}{s_n}\sum_{i'=1}^nX_{i'\pi_{ijkl}(i')}\mathbbm{1}_{[i'/n,1]}.$$
Then $\mathcal{L}(\mathbf{Y}_{n,ijkl})=\mathcal{L}(\mathbf{Y}_n|\pi(i)=k,\pi(j)=l)$ (recalling that $\mathcal{L}(\cdot)$ denotes the law). Also, for each choice of $i\neq j$, $k\neq l$ let $\mathbbm{X}^{ijkl}:=\left\lbrace X_{i'j'}^{ijkl}:i',j'\in[n]\right\rbrace$ be the same as $\mathbbm{X}:=\lbrace X_{ij};i,j\in[n]\rbrace$ except that $\lbrace X_{ik},X_{il},X_{jk},X_{jl}\rbrace$ has been replaced by an independent copy $\lbrace X_{ik}',X_{il}',X_{jk}',X_{jl}'\rbrace$. Then let 
$$\mathbf{Y}_n^{ijkl}=\frac{1}{s_n}\sum_{i'=1}^nX_{i'\pi(i')}^{ijkl}\mathbbm{1}_{[i'/n,1]}$$
and note that $\mathbf{Y}_n^{ijkl}$ is independent of $\lbrace X_{ik},X_{il},X_{jk},X_{jl}\rbrace$ and $\mathcal{L}(\mathbf{Y}_n^{ijkl})=\mathcal{L}(\mathbf{Y}_n)$ (where $\mathcal{L}$ denotes the law).

Now, by Lemma \ref{lemma1_app}, proved in the appendix, for $\epsilon_2$ of Theorem \ref{theorem1},
\begin{align}
\epsilon_2=&\left|\frac{1}{2\lambda_n}\mathbbm{E}D^2f(\mathbf{Y}_n)\left[(\mathbf{Y}_n-\mathbf{Y}_n'),\mathbf{Y}_n-\mathbf{Y}_n'\right]-\mathbbm{E}D^2f(\mathbf{Y}_n)[\mathbf{D}_n,\mathbf{D}_n]\right|\nonumber\\
\leq & A+B\label{44.4}
\end{align}
where
\begin{align}
A=&\left|\frac{1}{n(n-1)s_n^2}\underset{i\neq j,k\neq l}{\sum_{1\leq i,j,k,l\leq n}}\mathbbm{E}\left\lbrace\left[\frac{(X_{ik}-X_{il})^2}{2n}-\frac{\hat{Z}_i^2}{n-1}\right]\right.
\left(D^2f(\mathbf{Y}_{n,ijkl})-D^2f\left(\mathbf{Y}_{n}^{ijkl}\right)\right)\left[\mathbbm{1}_{[i/n,1]}\mathbbm{1}_{[i/n,1]}\right]\vphantom{\left[\frac{1^2}{2^2}\right]}\right\rbrace\nonumber\\
&+\frac{1}{n(n-1)s_n^2}\underset{i\neq j,k\neq l}{\sum_{1\leq i,j,k,l\leq n}}\mathbbm{E}\left\lbrace\left[\frac{(X_{ik}-X_{il})(X_{jl}-X_{jk})}{2n}-\hat{Z}_i\hat{Z}_j\right]\right.\nonumber\\
&\left.\left.\hphantom{\frac{1}{n(n-1)s_n^2}\underset{i\neq j,k\neq l}{\sum_{1\leq i,j,k,l\leq n}}\mathbbm{E}}\cdot\left(D^2f(\mathbf{Y}_{n,ijkl})-D^2f\left(\mathbf{Y}_{n}^{ijkl}\right)\right)\left[\mathbbm{1}_{[i/n,1]},\mathbbm{1}_{[j/n,1]}\right]\vphantom{\left[\frac{1^2}{2^2}\right]}\right\rbrace\vphantom{\frac{1}{2}\underset{2}{\sum_{2}}}\right|\label{a_def},\\
B=&\left|\frac{1}{n(n-1)s_n^2}\underset{i\neq j,k\neq l}{\sum_{1\leq i,j,k,l\leq n}}\mathbbm{E}\left\lbrace\left[\frac{(X_{ik}-X_{il})^2}{2n}-\frac{\hat{Z}_i^2}{n-1}\right]D^2f\left(\mathbf{Y}_{n}^{ijkl}\right)\left[\mathbbm{1}_{[i/n,1]}\mathbbm{1}_{[i/n,1]}\right]\right\rbrace\right.\nonumber\\
&\left.+\frac{1}{n(n-1)s_n^2}\underset{i\neq j,k\neq l}{\sum_{1\leq i,j,k,l\leq n}}\mathbbm{E}\left\lbrace\left[\frac{(X_{ik}-X_{il})(X_{jl}-X_{jk})}{2n}-\hat{Z}_i\hat{Z}_j\right]\right.\right.\nonumber\\
&\phantom{\frac{1}{n(n-1)s_n^2}\underset{i\neq j,k\neq l}{\sum_{1\leq i,j,k,l\leq n}}\mathbbm{E}}\left.\left.\cdot D^2f\left(\mathbf{Y}_{n}^{ijkl}\right)\left[\mathbbm{1}_{[i/n,1]},\mathbbm{1}_{[j/n,1]}\right]\vphantom{\left[\frac{1^2}{2^2}\right]}\right\rbrace\vphantom{\frac{1^2}{2}\underset{2}{\sum_{2}}}\right|.\nonumber
\end{align}
Recalling that $\mathbf{Y}_n^{ijkl}$ is independent of $\lbrace X_{ik},X_{il},X_{jk},X_{jl}\rbrace$ and $\mathcal{L}(\mathbf{Y}_n^{ijkl})=\mathcal{L}(\mathbf{Y}_n)$,
\begin{align}
B=&\left|\frac{1}{n(n-1)s_n^2}\underset{i\neq j,k\neq l}{\sum_{1\leq i,j,k,l\leq n}}\mathbbm{E}\left[\frac{(X_{ik}-X_{il})^2}{2n}-\frac{\hat{Z}_i^2}{n-1}\right]\mathbbm{E}\left\lbrace D^2f\left(\mathbf{Y}_{n}\right)\left[\mathbbm{1}_{[i/n,1]}\mathbbm{1}_{[i/n,1]}\right]\right\rbrace\right.\nonumber\\
&+\frac{1}{n(n-1)s_n^2}\underset{i\neq j,k\neq l}{\sum_{1\leq i,j,k,l\leq n}}\mathbbm{E}\left[\frac{(X_{ik}-X_{il})(X_{jl}-X_{jk})}{2n}-\hat{Z}_i\hat{Z}_j\right]
\left.\mathbbm{E}\left\lbrace D^2f\left(\mathbf{Y}_{n}\right)\left[\mathbbm{1}_{[i/n,1]},\mathbbm{1}_{[j/n,1]}\right]\vphantom{\frac{1}{2}}\right\rbrace\vphantom{\frac{1^2}{2^2}\underset{2}{\sum_2}}\right|\nonumber\\
\leq&\frac{\|g\|_{M^1}}{n^2(n-1)s_n^2}\sum_{1\leq i\neq j\leq n}\sum_{r=1}^n\sigma^2_{ir}\nonumber\\
=&\frac{\|g\|_{M^1}}{n^2s_n^2}\sum_{i,j=1}^n\sigma_{ij}^2,\label{44.5}
\end{align}
where the inequality follows by (\ref{e_zi}), (\ref{e_zi_zj}) and Proposition \ref{prop12.7}.
Furthermore, by Lemma \ref{lemma2_app}, proved in the appendix,
\begin{align}
A\leq&\frac{\|g\|_{M^1}}{2n^2(n-1)^2s_n^3}\sum_{1\leq i,j,k,l,u\leq n}\left\lbrace\vphantom{\sum_i^j} \mathbbm{E}|X_{ik}|^3+5\mathbbm{E}|X_{ik}|^2\mathbbm{E}|X_{il}|+9\mathbbm{E}|X_{ik}|^2\mathbbm{E}|X_{jl}|+5\mathbbm{E}|X_{ik}|^2\mathbbm{E}|X_{jk}|\right.\nonumber\\
&+14\mathbbm{E}|X_{ik}|\mathbbm{E}|X_{il}|\mathbbm{E}|X_{jl}| +2\mathbbm{E}|X_{ik}|\mathbbm{E}|X_{il}|\mathbbm{E}|X_{iu}|+6\mathbbm{E}|X_{ik}|\mathbbm{E}|X_{jl}|\mathbbm{E}|X_{iu}|+2\mathbbm{E}[|X_{uk}||X_{ik}|^2]\nonumber\\
&+4\mathbbm{E}|X_{uk}X_{ik}|\mathbbm{E}|X_{il}|+4\mathbbm{E}|X_{uk}X_{ik}|\mathbbm{E}|X_{jl}|+8\mathbbm{E}|X_{uk}|\mathbbm{E}|X_{ik}|\mathbbm{E}|X_{jl}|+2\mathbbm{E}|X_{uk}X_{ik}X_{jk}|\nonumber\\
&\left.+2\mathbbm{E}|X_{uk}|\mathbbm{E}|X_{ik}|\mathbbm{E}|X_{jk}|+\frac{2}{n}\mathbbm{E}\left(2\left|X_{ik}\right|+6\left|X_{jl}\right|\right)\cdot\left(\sum_{r=1}^n\mathbbm{E}|X_{ir}|^2+\left|\sum_{r=1}^nc_{ir}c_{jr}\right|\right)\right\rbrace.\label{a_est}
\end{align}
We now use (\ref{44.2}),(\ref{44.3}),(\ref{44.4}),(\ref{44.5}),(\ref{a_est}) to obtain the assertion.
\end{proof}

\subsection{Convergence to a continuous Gaussian process}
\begin{theorem}\label{conv_cont_theorem}
Let $\mathbbm{X}$ and $\mathbf{Y}_n$ be as defined in Subsection \ref{intro_section} and suppose that for all $u,t\in[0,1]$,
\begin{equation}\label{assumption1}
\frac{1}{s_n^2(n-1)}\sum_{i=1}^{\lfloor nt\rfloor}\sum_{j=1}^{\lfloor nu\rfloor}\sum_{k=1}^n\mathbbm{E}X_{ik}X_{jk}\left(\delta_{i,j}-\frac{1}{n}\right)\xrightarrow{n\to\infty}\sigma(u,t)
\end{equation}
and
\begin{equation}\label{assumption2}
\frac{1}{s_n^2}\sum_{i=1}^{\lfloor nt\rfloor}\sum_{j=1}^{\lfloor nu\rfloor}\sum_{l=1}^n\mathbbm{E}X_{il}X_{jl}\xrightarrow{n\to\infty}\sigma^{(2)}(u,t)
\end{equation}
pointwise for some functions $\sigma,\sigma^{(2)}:[0,1]^2\to\mathbbm{R}_+$. Suppose furthermore that
\begin{equation}\label{assumption3}
\sup_{n\in\mathbbm{N}}\frac{1}{n^2s_n^4}\sum_{l=1}^n\sum_{i=1}^n\text{Var}\left[X_{il}^2\right]<\infty.
\end{equation}
and
\begin{equation}\label{assumption4}
\frac{1}{s_n^2(n-1)}\sum_{i=1}^{\lfloor nt\rfloor}\left(\sum_{l=1}^nX_{il}''Z_{il}\right)^2\xrightarrow{P} c(t)
\end{equation}
pointwise for some function $c:[0,1]\to\mathbbm{R}_+$ and
\begin{equation}\label{assumption5}
\lim_{n\to\infty}\frac{1}{s_n\sqrt{n-1}}\mathbbm{E}\left[\sup_{i=1,\cdots,n}|X_{il}''Z_{il}|\right]=0.
\end{equation}
Then $\left(\mathbf{Y}_n(t),t\in[0,1]\right)$ converges weakly in the uniform topology to a continuous Gaussian process $\left(\mathbf{Z}(t),t\in[0,1]\right)$ with the covariance function $\sigma$.
\end{theorem}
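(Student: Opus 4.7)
The plan is to deduce $\mathbf{Y}_n\Rightarrow \mathbf{Z}$ from Proposition \ref{prop_m} applied with $\mathbf{Z}_n=\mathbf{D}_n$, the pre-limiting Gaussian-mixture process of (\ref{pre_lim_an}). Since $\mathbf{Y}_n$ is piecewise constant with intervals of constancy of length $1/n$, one may take $r_n=1/n$, so $\log^2(1/r_n)=\log^2 n$. Two ingredients are therefore required: a quantitative bound $|\mathbbm{E}g(\mathbf{Y}_n)-\mathbbm{E}g(\mathbf{D}_n)|\le C\mathscr{T}_n\|g\|_{M^0}$ for $g\in M^0$ with $\mathscr{T}_n\log^2 n\to 0$, together with weak convergence $\mathbf{D}_n\Rightarrow \mathbf{Z}$ in $D^1$.

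For the first ingredient I would invoke Theorem \ref{combi_pre_lim}; since $M^0\subset M^1$ and $\|g\|_{M^1}\le \|g\|_{M^0}$, the bound there applies with $\|g\|_{M^1}$ replaced by $\|g\|_{M^0}$. Term-by-term inspection shows that, under assumptions (\ref{assumption1})--(\ref{assumption3}) (which jointly control the growth of $s_n$, the second moments of the $X_{ij}$ and, via Lyapunov, their third absolute moments), each summand on the right-hand side is of order at most $n^{-1/2}$ up to logarithmic factors, so $\mathscr{T}_n\log^2 n\to 0$.

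For the second ingredient I would prove $\mathbf{D}_n\Rightarrow \mathbf{Z}$ in $D^1$ by finite-dimensional convergence together with tightness. Using the natural decomposition
\begin{equation*}
\hat Z_i=\frac{1}{\sqrt{n-1}}\sum_{l=1}^n X_{il}''Z_{il}-\frac{1}{n\sqrt{n-1}}\sum_{l=1}^n X_{il}''\sum_{j=1}^n Z_{jl}=:\xi_i-\bar\xi_i,
\end{equation*}
write $\mathbf{D}_n(t)=T_n(t)-S_n(t)$ with $T_n(t):=s_n^{-1}\sum_{i=1}^{\lfloor nt\rfloor}\xi_i$ and $S_n(t):=s_n^{-1}\sum_{i=1}^{\lfloor nt\rfloor}\bar\xi_i$. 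The variables $\xi_i$ are independent across $i$, so the Lindeberg CLT for triangular arrays applies to $T_n(t)$: assumption (\ref{assumption4}) identifies the limit of the quadratic-variation sum $\sum_{i\le\lfloor nt\rfloor}\xi_i^2/s_n^2$ as $c(t)$, and assumption (\ref{assumption5}) supplies the negligibility of the maximal summand. Joint convergence across finitely many time points follows from the Cram\'er--Wold device, with the limiting covariance identified by (\ref{assumption1}) and (\ref{assumption2}). For the correction $S_n$, a direct second-moment calculation combined with (\ref{assumption3}) yields $\sup_{t\in[0,1]}|S_n(t)|\xrightarrow{P}0$, so $\mathbf{D}_n$ has the same weak limit as $T_n$. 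Tightness of $\mathbf{D}_n$ in $D^1$ follows from a Kolmogorov-type fourth-moment bound on increments (exploiting the independence of the $\hat Z_i$'s and assumption (\ref{assumption3})), and continuity of the limit from Kolmogorov's continuity criterion, so that $\mathbf{D}_n\Rightarrow \mathbf{Z}$ in the Skorokhod topology to a continuous Gaussian process with covariance $\sigma$.

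The main technical obstacle is the finite-dimensional convergence: although $\mathbf{D}_n$ is Gaussian conditional on $\mathbbm{X}''$, its conditional covariance is random, and for the limit to be the pure Gaussian process $\mathbf{Z}$ this randomness must be shown to concentrate in probability on the deterministic $\sigma$. This is exactly where assumption (\ref{assumption3}) (bounding the $L^2$-fluctuation of the conditional variance), assumption (\ref{assumption4}) (pinning down the diagonal contribution), and assumptions (\ref{assumption1})--(\ref{assumption2}) (identifying the off-diagonal contribution and the effect of the mean-centring correction $\bar\xi_i$) must be woven together. Once finite-dimensional convergence and tightness are in place, Proposition \ref{prop_m} delivers the asserted uniform weak convergence of $\mathbf{Y}_n$ to $\mathbf{Z}$.
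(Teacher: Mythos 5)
Your overall architecture coincides with the paper's: bound $|\mathbbm{E}g(\mathbf{Y}_n)-\mathbbm{E}g(\mathbf{D}_n)|$ via Theorem \ref{combi_pre_lim}, feed this into Proposition \ref{prop_m} with $r_n=1/n$, and separately prove $\mathbf{D}_n\Rightarrow\mathbf{Z}$ using exactly the decomposition $\hat Z_i=\xi_i-\bar\xi_i$, i.e.\ $\mathbf{D}_n=\mathbf{D}_n^{(1)}-\mathbf{D}_n^{(2)}$, with (\ref{assumption4})--(\ref{assumption5}) handling $\mathbf{D}_n^{(1)}$ (the paper invokes the martingale functional CLT of Ethier--Kurtz rather than Lindeberg plus Cram\'er--Wold, but that is a cosmetic difference). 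The proposal breaks down, however, at the treatment of the correction term $S_n=\mathbf{D}_n^{(2)}$. You claim $\sup_{t}|S_n(t)|\xrightarrow{P}0$; this is false in general. Computing directly, $\operatorname{Var}\bigl(\mathbf{D}_n^{(2)}(t)\bigr)=\frac{1}{n(n-1)s_n^2}\sum_{l=1}^n\bigl[\sum_{i\leq\lfloor nt\rfloor}\sigma_{il}^2+\bigl(\sum_{i\leq\lfloor nt\rfloor}c_{il}\bigr)^2\bigr]$, and while the first summand is $O(1/n)$, the second is of order one whenever the partial column sums $\sum_{i\leq\lfloor nt\rfloor}c_{il}$ are of order $n$ (only the \emph{full} column sums $c_{\cdot l}$ are assumed to vanish). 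This is precisely the sampling-without-replacement correction: it is the source of the $-\frac{1}{n}$ term inside (\ref{assumption1}) and the entire reason assumption (\ref{assumption2}) and the function $\sigma^{(2)}$ appear in the statement. If $S_n$ were uniformly negligible, both would be superfluous and $\sigma$ would reduce to the limit of the diagonal part alone. The paper instead shows that $\mathbf{D}_n^{(2)}$ converges to a nondegenerate continuous Gaussian process $\mathbf{Z}_2$ with covariance $\sigma^{(2)}$, using a conditional-Gaussianity fourth-moment increment bound (where (\ref{assumption3}) enters) to get C-tightness via Billingsley, and then combines C-tightness of both pieces with the covariance computation (\ref{assumption1}) for the difference.

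Two smaller points. First, your argument identifies the limit of $T_n$ as Gaussian but, since $S_n$ is not negligible and $T_n$ and $S_n$ are dependent, you still need an argument that the limit of the \emph{difference} is Gaussian with covariance $\sigma$; the paper sidesteps the joint-Gaussianity issue by establishing C-tightness of each piece, identifying the limiting covariance of $\mathbf{D}_n$ directly, and appealing to the one-dimensional combinatorial CLT of Chen and Fang to conclude Gaussianity of $\mathbf{Z}$. Second, your claim that (\ref{assumption1})--(\ref{assumption3}) control the third absolute moments ``via Lyapunov'' is backwards: third moments dominate, rather than being dominated by, second moments, so the vanishing of the Theorem \ref{combi_pre_lim} bound times $\log^2 n$ does not follow from moment monotonicity (the paper is itself terse on this point, but you should not present it as a consequence of Lyapunov's inequality).
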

\begin{remark}
Assumption (\ref{assumption2}) could also say that
$$\frac{1}{s_n^2}\sum_{i=1}^{\lfloor nt\rfloor}\sum_{j=1}^{\lfloor nu\rfloor}\sum_{l=1}^n\mathbbm{E}X_{il}X_{jl}$$
simply converges pointwise rather than giving the limit a name. However, we will use $\sigma^{(2)}$ in the proof so it is convenient to use it in the formulation of the Theorem as well.
\end{remark}
\begin{remark}
Assumption (\ref{assumption4}) is necessary for the limiting process in Theorem \ref{conv_cont_theorem} to be continuous. It essentially corresponds to the the assumption that the quadratic variation of the following process $$\mathbf{D}_n^{(1)}(t)=\frac{1}{s_n\sqrt{n-1}}\sum_{i=1}^{\lfloor nt\rfloor}\sum_{l=1}^nX_{il}''Z_{il}$$ converges to the function $c$ pointwise in probability, which then implies the weak convergence of the process $\mathbf{D}_n^{(1)}$ to a continuous process. While it is relatively easy to show that $\mathbf{D}_n^{(2)}=\mathbf{D}_n-\mathbf{D}_n^{(1)}$ converges to a continuous limit, we had to explicitly add this assumption to ensure that $\mathbf{D}_n$ does as well.
\end{remark}
The proof of Theorem \ref{conv_cont_theorem} will be similar to the proof of \cite[Theorem 3.3]{functional_combinatorial}. The pre-limiting approximand $\mathbf{D}_n$, defined in Subsection \ref{section_pre_lim}, will be expressed as a sum of two parts. In \textbf{Steps 1} and \textbf{2} we prove that each of those parts is C-tight (i.e. they are tight and for each of them any convergent subsequence converges to a process with continuous sample paths). In \textbf{Step 3} we show that the assumptions of Theorem \ref{conv_cont_theorem} trivially imply the convergence of the covariance function of $\mathbf{D}_n$, which together with C-tightness implies the convergence of $\mathbf{D}_n$ to a continuous process. Theorem \ref{theorem_pre_limiting} will then be combined with Proposition \ref{prop_m} to show convergence of $\mathbf{Y}_n$ to the same limiting process. Finally, the combinatorial central limit theorem for random arrays, proved in \cite{chen_ho} and analysed in \cite{chen2015}, will imply that $\mathbf{Z}$ is Gaussian.

\begin{proof}[Proof of Theorem \ref{conv_cont_theorem}]
We will use the notation of Subsections \ref{intro_section} and \ref{section_pre_lim}.

\textbf{Step 1.}
Note that $\mathbf{D}_n=\mathbf{D}_n^{(1)}+\mathbf{D}_n^{(2)}$, where:
$$\mathbf{D}_n^{(1)}(t)=\frac{1}{s_n\sqrt{n-1}}\sum_{i=1}^{\lfloor nt\rfloor}\sum_{l=1}^nX_{il}''Z_{il},\quad \mathbf{D}_n^{(2)}(t)=\frac{1}{s_n\sqrt{n-1}}\sum_{i=1}^{\lfloor nt\rfloor}\sum_{l=1}^nX_{il}''\bar{Z}_l$$
for $\bar{Z}_l=\frac{1}{n}\sum_{j=1}^nZ_{jl}$.

Now, note that, by (\ref{assumption4}):
$$\left< \mathbf{D}_n^{(1)}\right>_t\xrightarrow{P}c(t)$$
pointwise, where $\left<\cdot\right>$ denotes quadratic variation.
Therefore, by \cite[Chapter 7, Theorem 1.4]{ethier} and using (\ref{assumption5}), we obtain that $\mathbf{D}_n^{(1)}$ converges weakly in the Skorokhod topology on $D[0,1]$ to a continuous Gaussian process with independent increments.

We now note that the Skorokhod space equipped with the metric (topologically equivalent to the Skorokhod metric) with respect to which it is complete. It is also universally measurable by the discussion at the beginning of \cite[Chapter 11.5]{dudley_book}. Since it is also separable and $\mathbf{D}_n^{(1)}\Rightarrow \mathbf{Z}_1$, for some continuous process $\mathbf{Z}_1$, in the Skorokhod topology, \cite[Theorem 11.5.3]{dudley_book} implies that $(\mathbf{D}_n^{(1)})_{n\geq 1}$ is C-tight. 

\textbf{Step 2.}
Also, note that for $u>t$ s.t. $\lfloor nu\rfloor\geq \lfloor nt\rfloor +1$,
\begin{align*}
\mathbbm{E}\left[\left.\left|\mathbf{D}_n^{(2)}(u)-\mathbf{D}_n^{(2)}(t)\right|^2\right|X_{il}'',i,l\in[n]\right]=&\frac{1}{n(n-1)s_n^2}\sum_{l=1}^n\left(\sum_{i=\lfloor nt\rfloor+1}^{\lfloor nu\rfloor}X_{il}\right)^2\\
\leq&\frac{\lfloor nu\rfloor -\lfloor nt\rfloor}{n(n-1)s_n^2}\sum_{l=1}^n\sum_{i=\lfloor nt\rfloor+1}^{\lfloor nu\rfloor}X_{il}^2
\end{align*}
and
$$\mathbbm{E}\left[\left.\left|\mathbf{D}_n^{(2)}(u)-\mathbf{D}_n^{(2)}(t)\right|^2\right|X_{il}'',i,l\in[n]\right]=0,\quad\text{for }u>t\text{ s.t. }\lfloor nu\rfloor=\lfloor nt\rfloor.$$
Since $\left(\left.\mathbf{D}_n^{(2)}\right|X_{il}'',i,l\in[n]\right)$ is Gaussian for $u$, such that $\lfloor nu\rfloor \geq \lfloor nt\rfloor +1$,
\begin{align}
\mathbbm{E}\left|\mathbf{D}_n^{(2)}(u)-\mathbf{D}_n^{(2)}(t)\right|^4
=&3\mathbbm{E}\left\lbrace\left(\mathbbm{E}\left[\left.\left|\mathbf{D}_n^{(2)}(u)-\mathbf{D}_n^{(2)}(t)\right|^2\right|X_{il}'',i,l\in[n]\right]\right)^2\right\rbrace\nonumber\\
\leq&3\left(\frac{\lfloor nu\rfloor -\lfloor nt\rfloor}{n(n-1)s_n^2}\right)^2\mathbbm{E}\left(\sum_{l=1}^n\sum_{i=\lfloor nt\rfloor+1}^{\lfloor nu\rfloor}X_{il}^2\right)^2\nonumber\\
=&3\left(\frac{\lfloor nu\rfloor -\lfloor nt\rfloor}{n(n-1)s_n^2}\right)^2\left[\left(\sum_{l=1}^n\sum_{i=\lfloor nt\rfloor+1}^{\lfloor nu\rfloor}\mathbbm{E}X_{il}^2\right)^2+\sum_{l=1}^n\sum_{i=\lfloor nt\rfloor+1}^{\lfloor nu\rfloor}\left(\mathbbm{E}X_{il}^4-\left(\mathbbm{E}X_{il}^2\right)^2\right)\right]\nonumber\\
\leq&C\left(\frac{\lfloor nu\rfloor -\lfloor nt\rfloor}{(n-1)}\right)^2\label{4.1}
\end{align}
for some constant $C$, by (\ref{assumption3}).
Now, note that:
$$\text{Cov}\left(\mathbf{D}_n^{(2)}(t),\mathbf{D}_n^{(2)}(u)\right)=\frac{1}{s_n^2}\sum_{i=1}^{\lfloor nt\rfloor}\sum_{j=1}^{\lfloor nu\rfloor}\sum_{l=1}^n\mathbbm{E}X_{il}X_{jl}\xrightarrow{n\to\infty}\sigma^{(2)}(t,u),$$
by (\ref{assumption2}).
Consider a mean zero Gaussian process $\mathbf{Z}_2$ with covariance function $\mathbbm{E}\mathbf{Z}_2(t)\mathbf{Z}_2(u)=\sigma^{(2)}(t,u)$. The finite dimensional distributions of $\mathbf{D}_n^{(2)}$ converge to those of $\mathbf{Z}_2$.
We can now construct $\mathbf{D}_n^{(2)}$ and $\mathbf{Z}_2$ on the same probability space and use Skorokhod's representation theorem, Fatou's lemma and (\ref{4.1}) to conclude that:
$$\mathbbm{E}\left(\left|\mathbf{Z}_{2}(u)-\mathbf{Z}_2(t)\right|^4\right)\leq \lim_{n\to\infty}\mathbbm{E}\left(\left|\mathbf{D}_n^{(2)}(u)-\mathbf{D}_n^{(2)}(t)\right|^4\right)\leq C(u-t)^2.$$
By \cite[Theorem 12.4]{billingsley1}, we can assume that $\mathbf{Z}_2\in C[0,1]$. Now, note that for $0\leq t\leq v\leq u\leq 1$:
\begin{align*}
\mathbbm{E}\left|\mathbf{D}_n^{(2)}(v)-\mathbf{D}_n^{(2)}(t)\right|^2\left|\mathbf{D}_n^{(2)}(v)-\mathbf{D}_n^{(2)}(u)\right|^2
\leq& \sqrt{\mathbbm{E}\left|\mathbf{D}_n^{(2)}(v)-\mathbf{D}_n^{(2)}(t)\right|^4\mathbbm{E}\left|\mathbf{D}_n^{(2)}(v)-\mathbf{D}_n^{(2)}(u)\right|^4}\\
\stackrel{(\ref{4.1})}\leq& C \frac{(\lfloor nv\rfloor -\lfloor nt\rfloor)(\lfloor nu\rfloor-\lfloor nv\rfloor)}{(n-1)^2}\\
\leq&\bar{C}(u-t)^2;
\end{align*}
for some constant $\bar{C}$. Therefore, by \cite[Theorem 15.6]{billingsley1}, $\mathbf{D}_n^{(2)}\Rightarrow \mathbf{Z}_2$ in the Skorokhod and uniform topologies and so, by \cite[Theorem 11.5.3]{dudley_book}, $\mathbf{D}_n^{(2)}$ is C-tight.

\textbf{Step 3.}
Since both $\mathbf{D}_n^{(1)}$ and $\mathbf{D}_n^{(2)}$ are C-tight, so is their difference $\mathbf{D}_n$. Now:
\begin{align*}
\text{Cov}(\mathbf{D}_n(t),\mathbf{D}_n(u))=&\frac{1}{s_n^2(n-1)}\sum_{i=1}^{\lfloor nt\rfloor}\sum_{j=1}^{\lfloor nu\rfloor}\sum_{k,l=1}^n\mathbbm{E}\left\lbrace X_{ik}X_{jl}\left(Z_{ik}-\bar{Z}_k\right)\left(Z_{jl}-\bar{Z}_l\right)\right\rbrace\\
=&\frac{1}{s_n^2(n-1)}\sum_{i=1}^{\lfloor nt\rfloor}\sum_{j=1}^{\lfloor nu\rfloor}\sum_{k=1}^n\mathbbm{E}\left\lbrace X_{ik}X_{jk}\left(Z_{ik}-\bar{Z}_k\right)\left(Z_{jk}-\bar{Z}_k\right)\right\rbrace\\
=&\frac{1}{s_n^2(n-1)}\sum_{i=1}^{\lfloor nt\rfloor}\sum_{j=1}^{\lfloor nu\rfloor}\sum_{k=1}^n\mathbbm{E}X_{ik}X_{jk}\left(\delta_{i,j}-\frac{1}{n}\right)\xrightarrow{n\to\infty}\sigma(u,t),
\end{align*}
by (\ref{assumption1}) and we obtain that $\mathbf{D}_n$ converges to a random element $\mathbf{Z}\in C[0,1]$ with covariance function $\sigma$ in distribution with respect to the uniform and Skorokhod topologies.

Proposition \ref{prop_m} and Theorem \ref{combi_pre_lim} therefore imply that $\left(\mathbf{Y}_n(t),t\in[0,1]\right)$ converges weakly to $\left(\mathbf{Z}(t),t\in[0,1]\right)$ in the uniform topology. Using, for example, \cite[Theorem 1.1]{chen2015}, we conclude that $\mathbf{Z}$ is a Gaussian process.
\end{proof}

\section{Edge and two-star counts in Bernoulli random graphs}\label{section6}
In this section we consider a process representing a properly rescaled number of edges  in a Bernoulli random graph with a fixed edge probability and $\lfloor nt\rfloor$ edges for $t\in[0,1]$. A similar setup has been considered in \cite{reinert_roellin1}, where the authors established a bound on the distance between a three-dimensional vector consisting of a rescaled number of edges, a rescaled number of two-stars and a rescaled number of triangles in a $G(n,p)$ graph and a three-dimensional Gaussian vector. We first compare our process to a pre-limiting Gaussian processes with paths in $D([0,1],\mathbbm{R})$ and bound the distance between the two in Theorem \ref{theorem_pre_limiting}. Then, in Theorem \ref{theorem_continuous}, we bound the distance of our process from a continuous Gaussian process.
\subsection{Introduction}\label{section1}
Let us consider a Bernoulli random graph $G(n,p)$ on $n$ vertices with edge probabilities $p$.

Let $I_{i,j}=I_{j,i}$ be the Bernoulli$(p)$-indicator that edge $(i,j)$ is present in this graph. These indicators, for $(i,j)\in\lbrace 1,\cdots,n\rbrace^2$ are independent. We will look at a process representing at each $t\in[0,1]$ the re-scaled total number of edges in the graph formed out of the given Bernoulli random graph by considering only its first $\lfloor nt\rfloor$ vertices and the edges between them:
$$\mathbf{T}_n(t)=\frac{\lfloor nt\rfloor-2}{2n^2}\sum_{i,j=1}^{\lfloor nt\rfloor} I_{i,j}=\frac{\lfloor nt\rfloor-2}{n^2}\sum_{1\leq i<j\leq \lfloor nt\rfloor}I_{i,j}.$$
Let $\mathbf{Y}_n(t)=\mathbf{T}_n(t)-\mathbbm{E}\mathbf{T}_n(t)$ for $t\in[0,1]$.
\begin{remark}
Note that, for all $t\in[0,1]$, $\mathbbm{E}\mathbf{T}_n(t)=\frac{\lfloor nt\rfloor -2}{n^2}{\lfloor nt\rfloor \choose 2}p$. Furthermore, note that, by an argument similar to that of \cite[Section 5]{reinert_roellin1}, the variance of $\mathbf{T}_n(t)-\mathbbm{E}\mathbf{T}_n(t)$ is given by
$$3\frac{(\lfloor nt\rfloor -2){\lfloor nt\rfloor \choose 3}}{n^4}p(1-p).$$
\end{remark}
\begin{remark}
While one might often be interested in the asymptotic properties of the number of two-stars or the number of triangles in the above mentioned Bernoulli random graph, it is difficult to prove useful exchangeable-pair conditions for those. Specifically, one encounters several problems which were overcome in \cite{reinert_roellin1} (in the finite-dimensional setting) with the so-called \textit{embedding method} (introduced in \cite[Section 3]{reinert_roellin1}). Indeed, the problem becomes easier when the distribution of the number of edges, the number of two-stars and the number of triangles is considered jointly rather than considering the one-dimensional marginal laws separately. We therefore leave the problem of establishing bounds on the functional approximations of those statistics for further research, related to the multivariate method of exchangeable pairs (cf. Remark \ref{uni_multi}).
\end{remark}
\subsection{Exchangeable pair setup}\label{section2}
We now construct an exchangeable pair, as in \cite{reinert_roellin1}, by picking $(I,J)$ according to $\mathbbm{P}[I=i,J=j]=\frac{1}{{n\choose 2}}$ for $1\leq i<j\leq n$. If $I=i,J=j$, we replace $I_{i,j}=I_{j,i}$ by an independent copy $I_{i,j}'=I_{j,i}'$ and put:
\begin{align*}
\mathbf{T}_n'(t)&=\mathbf{T}_n(t)-\frac{\lfloor nt\rfloor-2}{n^2}\left(I_{I,J}-I_{I,J}'\right)\mathbbm{1}_{[I/n,1]\cap[J/n,1]}(t).
\end{align*}
We also let $\mathbf{Y}_n'(t)=\mathbf{T}_n'(t)-\mathbbm{E}\mathbf{T}_n(t)$ and note that, for $\mathbf{Y}_n=\left(\mathbf{Y}_n(t),t\in[0,1]\right)$ and $\mathbf{Y}_n'=\left(\mathbf{Y}_n'(t),t\in[0,1]\right)$, $(\mathbf{Y}_n,\mathbf{Y}_n')$ forms an exchangeable pair.
We note that for any $f\in M$, as defined in Section \ref{section22},
\begin{align*}
&\mathbbm{E}^{\mathbf{Y}_n}\left\lbrace Df(\mathbf{Y}_n)\left[\mathbf{T}_n'-\mathbf{T}_n\right]\right\rbrace\\
=&\mathbbm{E}^{\mathbf{Y}_n}\left\lbrace Df(\mathbf{Y}_n)\left[\frac{\lfloor n\cdot\rfloor-2}{n^2}\left(I_{I,J}'-I_{I,J}\right)\mathbbm{1}_{[I/n,1]\cap[J/n,1]}\right]\right\rbrace\\
=&\frac{2}{n^3(n-1)}\sum_{i<j}\mathbbm{E}^{\mathbf{Y}_n}\left\lbrace Df(\mathbf{Y}_n)\left[(\lfloor n\cdot\rfloor-2)\left(I_{i,j}'-I_{i,j}\right)\mathbbm{1}_{[i/n,1]\cap[j/n,1]}\right]|I=i,J=j\right\rbrace\\
=&-\frac{1}{{n\choose 2}}Df(\mathbf{Y}_n)[\mathbf{T}_n]+\frac{2}{n^3(n-1)}p\sum_{i<j}Df(\mathbf{Y}_n)\left[(\lfloor n\cdot\rfloor-2)\mathbbm{1}_{[i/n,1]\cap[j/n,1]}\right]\\
=&-\frac{1}{{n\choose 2}}Df(\mathbf{Y}_n)[\mathbf{T}_n(\cdot)-\mathbbm{E}\mathbf{T}_n(\cdot)]
\end{align*}
and so
$$\mathbbm{E}^{\mathbf{Y}_n}Df(\mathbf{Y}_n)\left[\mathbf{Y}_n'-\mathbf{Y}_n\right]=-\lambda_nDf(\mathbf{Y}_n)[\mathbf{Y}_n],$$
where
\begin{equation}\label{eq_lambda}
\lambda_n=\frac{2}{n(n-1)}.
\end{equation}
Therefore, condition (\ref{condition}) is satisfied with $\lambda_n$ of (\ref{eq_lambda}) and $R_f=0$.
\subsection{A pre-limiting process}\label{section3}
Let $Z_{ii}=0$ for all $i\in[n]=\lbrace 1,\dots,n\rbrace$. Furthermore, assume that the collection $\lbrace Z_{ij}: i,j\in [n], i\neq j\rbrace$ is jointly centred Gaussian with the following covariance structure:
\begin{align*}
&\mathbbm{E}Z_{ij}Z_{kl}=\begin{cases}
\frac{p(1-p)}{2n^4},&i=k,j=l,i\neq j\\
0,&\text{otherwise.}\end{cases}
\end{align*}
Let 
\begin{align*}
&\mathbf{D}_n=\left(\lfloor nt\rfloor-2\right)\sum_{i,j=1}^{\lfloor nt\rfloor}Z_{ij},\quad t\in[0,1].
\end{align*}
\subsection{Distance from the pre-limiting process}
We first give a theorem providing a bound on the distance between $\mathbf{Y}_n$ and the pre-limiting piecewise constant Gaussian process.
\begin{theorem}\label{theorem_pre_limiting}
Let $\mathbf{Y}_n$ be defined as in Section \ref{section1} and $\mathbf{D}_n$ be defined as in Section \ref{section3}. Then, for any $g\in M$,
$$\left|\mathbbm{E}g(\mathbf{Y}_n)-\mathbbm{E}g(\mathbf{D}_n)\right|\leq\frac{1}{8}\|g\|_{M}n^{-1}.$$
\end{theorem}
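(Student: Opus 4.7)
The plan is to invoke Theorem \ref{theorem1} using the exchangeable-pair calculations of Subsection \ref{section2}, which supply the matrix $\Lambda_n$ of \eqref{eq_lambda} together with remainder $R_f\equiv 0$. Since $R_f=0$ we have $\epsilon_3=0$, so it suffices to show $\epsilon_1+\epsilon_2\le 12\|g\|_{M^2}/n$. Throughout I set $f=\phi_n(g)$ and use the derivative bounds from Proposition \ref{prop12.7}.

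\textbf{Bounding $\epsilon_1$.} The construction of $\mathbf{Y}_n'$ in Subsection \ref{section2} gives $\|\mathbf{T}_n-\mathbf{T}_n'\|\le (n-2)/n^2$ since $|I_{I,J}-I_{I,J}'|\le 1$, and $\|\mathbf{V}_n-\mathbf{V}_n'\|\le 2(n-2)/n^2$ since the sum there contains at most $n-2$ terms each bounded by $2$. The operator norm of $\Lambda_n$ in \eqref{eq_lambda} is of order $n^2$, so $\mathbbm{E}\|(\mathbf{Y}_n-\mathbf{Y}_n')\Lambda_n\|\,\|\mathbf{Y}_n-\mathbf{Y}_n'\|^2=O(n^{-1})$, of the right order.

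\textbf{The main obstacle: bounding $\epsilon_2$.} By bilinearity of $D^2f(\mathbf{Y}_n)[\cdot,\cdot]$ and the block form of $\Lambda_n$, I expand
\[
(\mathbf{Y}_n-\mathbf{Y}_n')\Lambda_n = \frac{n(n-1)}{8}\bigl[\,2(\mathbf{T}_n-\mathbf{T}_n')e_1 + (2p(\mathbf{T}_n-\mathbf{T}_n')+(\mathbf{V}_n-\mathbf{V}_n'))e_2\,\bigr]
\]
and split $D^2 f(\mathbf{Y}_n)[(\mathbf{Y}_n-\mathbf{Y}_n')\Lambda_n,\mathbf{Y}_n-\mathbf{Y}_n']$ into four scalar pieces indexed by $\alpha,\beta\in\{1,2\}$. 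In each piece I first condition on $\mathbf{Y}_n$ and then average over the uniform pair $(I,J)$ and the independent replacement $I_{I,J}'$. The $(1,1)$-piece reduces to a pair-indexed sum weighted by $p(1-p)(\lfloor n\cdot\rfloor-2)^2$; the $(2,2)$-piece, arising from $(I_{I,J}-I_{I,J}')^2(I_{J,k_1}+I_{I,k_1})(I_{J,k_2}+I_{I,k_2})$, decomposes into a pair-indexed contribution (from $k_1=k_2$) plus a triple-indexed contribution (from $k_1\neq k_2$); the mixed $(1,2)$ and $(2,1)$ pieces combine into a further pair-indexed sum coupling the two coordinates via the $2p$ off-diagonal entry of $\Lambda_n$.

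\textbf{Matching with $\mathbf{D}_n$ and residual estimates.} The covariances declared in Subsection \ref{section3} are tuned precisely so that $\mathbbm{E}D^2 f(\mathbf{Y}_n)[\mathbf{D}_n,\mathbf{D}_n]$ reproduces this four-piece expansion term by term: $Z^{(1)}_{i,j}$ handles the edge variance, $Z^{(2,2)}_{i,j,k}$ carries the triple-indexed two-star covariances (with the two regimes $p^2(1-p^2)$ and $p^3(1-p)$ corresponding respectively to triples sharing all three and triples sharing exactly two indices), and $Z^{(2,1)}_{i,j}$ supplies the pair-indexed correction (with variance $1/n^5$) that absorbs the diagonal part of the two-star double sum. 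After this matching, $\epsilon_2$ collapses to a small number of genuine residues, coming from indices $k\in\{i,j\}$ that have to be excluded from the triple sums and from boundary effects in the nested indicators $\mathbbm{1}_{[I/n,1]\cap[J/n,1]\cap[k/n,1]}$. Each residue is bounded by pairing $\|D^2 f(w)\|\le \|g\|_{M^2}(1/2+\|w\|/3+\mathbbm{E}\|\mathbf{D}_n\|/3)$ from Proposition \ref{prop12.7} with the easy uniform estimate $\mathbbm{E}\|\mathbf{Y}_n\|+\mathbbm{E}\|\mathbf{D}_n\|=O(1)$, which follows from Doob's $L^2$ inequality applied to the natural martingale structures in $t$. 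Summing these residues with the $\epsilon_1$ bound yields the announced constant $12$.
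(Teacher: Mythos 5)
Your overall skeleton matches the paper's: apply Theorem \ref{theorem1} with the $\Lambda_n$ of (\ref{eq_lambda}) and $R_f=0$ so that $\epsilon_3=0$, bound $\epsilon_1$ via $\|\Lambda_n\|_2$ and the a.s.\ bound $\|\mathbf{Y}_n-\mathbf{Y}_n'\|=O(n^{-2})$, and handle $\epsilon_2$ by expanding bilinearly over the coordinates and matching against the covariance structure of $\mathbf{D}_n$. The $\epsilon_1$ part is fine (though you never actually compute the constant). The gap is in $\epsilon_2$: after the covariance matching you assert that what remains are only ``residues'' from excluded indices $k\in\{i,j\}$ and boundary effects, bounded by pairing $\|D^2f\|$ with uniform moment estimates. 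This misses the dominant error term. In the expression $\mathbbm{E}\bigl\{(I_{ij}-2pI_{ij}+p)(I_{jk}+I_{ik})\,D^2f(\mathbf{Y}_n)[\cdot,\cdot]\bigr\}$ the quadratic indicator coefficient is \emph{not} independent of $\mathbf{Y}_n$, whereas in $\mathbbm{E}D^2f(\mathbf{Y}_n)[\mathbf{D}_n,\mathbf{D}_n]$ the Gaussian weights are independent of $\mathbf{Y}_n$ and the expectation factorizes. Conditioning on $\mathbf{Y}_n$ and averaging over $(I,J)$ and $I_{I,J}'$, as you propose, removes only the randomness of the selection, not this dependence. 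The difference between the coupled and the factorized expectations is exactly what has to be estimated, and a crude bound of the form $\|\text{coefficient}\|_\infty\cdot\sup_w\|D^2f(w)\|\cdot\|(\lfloor n\cdot\rfloor-2)\mathbbm{1}_{[i/n,1]\cap[j/n,1]}\|^2$, summed over the $O(n^2)$ pairs with the $n^{-4}$ normalization, gives $O(1)$, not $O(n^{-1})$.

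The paper closes this gap with a decoupling argument (Lemma \ref{lemma8_app}): for each index tuple one constructs $\mathbf{Y}_n^{ij}$, $\mathbf{Y}_n^{ijk}$, $\mathbf{Y}_n^{ijkl}$ by replacing the finitely many relevant indicators by independent copies, so that the coefficient \emph{is} independent of $D^2f(\mathbf{Y}_n^{ij\cdots})$ and that expectation factorizes exactly and cancels against the $\mathbf{D}_n$ term; the error is then $\mathbbm{E}\{(\text{coefficient})(D^2f(\mathbf{Y}_n)-D^2f(\mathbf{Y}_n^{ij\cdots}))[\cdot,\cdot]\}$, controlled by the Lipschitz property of $D^2f$ from Proposition \ref{prop12.7}\,C) together with $\|\mathbf{Y}_n-\mathbf{Y}_n^{ij\cdots}\|=O(n^{-1})$. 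That extra factor of $n^{-1}$ is what produces the rate; without it your argument does not reach $O(n^{-1})$, and the claimed constant $12$ is unsupported. You would need to add this decoupling step (or an equivalent local-dependence/Taylor argument) to make the proof complete.
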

In \textbf{Step 1} of the proof, which is based on Theorem \ref{theorem1}, we estimate term $\epsilon_1$ thereof. It involves bounding the third moment of $\|\mathbf{Y}_n-\mathbf{Y}_n'\|$ for $\mathbf{Y}_n'$ constructed in Section \ref{section2}. In \textbf{Step 2} we treat $\epsilon_2$, which requires some more involved calculations. Term $\epsilon_3$ is equal to zero as $R_f$ of Section \ref{section2} is equal to zero.
\begin{proof}[Proof of Theorem \ref{theorem_pre_limiting}]
We adopt the notation of sections \ref{section1}, \ref{section2}, \ref{section3}. We will apply Theorem \ref{theorem1}. Let $g\in M$ and let $f=\phi(g)$, as defined in (\ref{phi}).

\textbf{Step 1.}
 First note that
\begin{align*}
&\mathbbm{E}\left[\|\mathbf{Y}_n-\mathbf{Y}_n'\|^3\right]
\leq \mathbbm{E}\left[\frac{(n-2)^3}{n^6}\left|I_{I,J}-I'_{I,J}\right|^3\right]\leq \frac{(n-2)^3}{n^6}
\end{align*}
where the second inequality follows because $|I_{I,J}-I_{I,J}'|\leq 1$. Therefore,
\begin{align}\label{4.1.1.1}
\epsilon_1\leq\frac{\|g\|_{M}}{12\lambda_n}\mathbbm{E}\left[\|\mathbf{Y}_n-\mathbf{Y}_n'\|^3\right]\leq \frac{\|g\|_{M}}{24n}.
\end{align}

\textbf{Step 2.}
For $\epsilon_2$ in Theorem \ref{theorem1}, we wish to bound
\begin{align}
\epsilon_2=&\left|\frac{1}{2\lambda_n}\mathbbm{E}D^2f(\mathbf{Y}_n)\left[\mathbf{Y}_n-\mathbf{Y}_n',\mathbf{Y}_n-\mathbf{Y}_n'\right]-\mathbbm{E}D^2f(\mathbf{Y}_n)\left[\mathbf{D}_n,\mathbf{D}_n\right]\right|\nonumber\\
=&\left|\frac{n(n-1)}{4}\mathbbm{E}D^2f(\mathbf{Y}_n)\left[\mathbf{T}_n-\mathbf{T}_n',\mathbf{T}_n-\mathbf{T}_n'\right]-\mathbbm{E}D^2f(\mathbf{Y}_n)\left[\mathbf{D}_n,\mathbf{D}_n\right]\right|.\nonumber
\end{align}
For fixed $i,j\in\lbrace 1,\cdots, n\rbrace$, let $\mathbf{Y}_n^{ij}$ be equal to $\mathbf{Y}_n$ except for the fact that $I_{ij}$ is replaced by an independent copy, i.e. for all $t\in [0,1]$ let:
\begin{align*}
\mathbf{T}_n^{ij}(t)&=\mathbf{T}_n(t)-\frac{\lfloor nt\rfloor-2}{n^2}\left(I_{ij}-I_{ij}'\right)\mathbbm{1}_{[i/n,1]\cap[j/n,1]}(t)
\end{align*}
and let $\mathbf{Y}_n^{ij}(t)=\mathbf{T}_n^{ij}(t)-\mathbbm{E}\mathbf{T}_n(t)$.

By noting that the mean zero $Z_{ik}$ and $Z_{i'j}$ are independent for $i\neq i'$ or $j\neq k$, we obtain:
\begin{align}
\epsilon_2=&\left|\vphantom{\sum_1^1}\frac{n(n-1)}{4}\mathbbm{E}D^2f(\mathbf{Y}_n)\left[\mathbf{T}_n-\mathbf{T}_n',\mathbf{T}_n-\mathbf{T}_n'\right]\right.\\
&\left.-\sum_{j,k=1}^n\mathbbm{E}D^2f(\mathbf{Y}_n)\left[\sum_{i=1}^{n}Z_{ik}(\lfloor n\cdot \rfloor -2)\mathbbm{1}_{[i/n,1]\cap[k/n,1]},\sum_{i=1}^{n}Z_{ij}(\lfloor n\cdot \rfloor -2)\mathbbm{1}_{[i/n,1]\cap[j/n,1]}\right]\right|\nonumber\\
=&\left|\frac{1}{4n^4}\sum_{1\leq i\neq j\leq n}\mathbbm{E}\left\lbrace\left(I_{ij}-2pI_{ij}+p\right)D^2f(\mathbf{Y}_n)\left[(\lfloor n\cdot \rfloor -2)\mathbbm{1}_{[i/n,1]\cap[j/n,1]},(\lfloor n\cdot \rfloor -2)\mathbbm{1}_{[i/n,1]\cap[j/n,1]}\right]\right\rbrace\right.\nonumber\\
&-\left.\sum_{i,j=1}^n\left\lbrace\mathbbm{E}\left(Z_{ij}\right)^2\mathbbm{E}D^2f(\mathbf{Y}_n)\left[(\lfloor n\cdot \rfloor -2)\mathbbm{1}_{[i/n,1]\cap[j/n,1]},(\lfloor n\cdot \rfloor -2)\mathbbm{1}_{[i/n,1]\cap[j/n,1]}\right]\vphantom{\left(Z^{1}_2\right)^2}\right\rbrace\vphantom{\frac{1}{4n^2}\sum_{i}}\right|\nonumber\\
=&\left|\sum_{1\leq i\neq j\leq n}\mathbbm{E}\left\lbrace \left(\frac{1}{4n^4}(I_{ij}-2pI_{ij}+p)-\mathbbm{E}\left(Z_{ij}\right)^2\right)\right.\right.\nonumber\\
&\left.\left.\hphantom{\sum_{1\leq i\neq j\leq n}\mathbbm{E}}\cdot D^2f(\mathbf{Y}_n)\left[(\lfloor n\cdot \rfloor -2)\mathbbm{1}_{[i/n,1]\cap[j/n,1]},(\lfloor n\cdot \rfloor -2)\mathbbm{1}_{[i/n,1]\cap[j/n,1]}\right]\vphantom{\frac{1}{4n^2}}\right\rbrace\vphantom{\sum_1^2}\right|\nonumber\\
=&\left|\sum_{1\leq i\neq j\leq n}\mathbbm{E}\left\lbrace \frac{1}{4n^4}(I_{ij}-2pI_{ij}+p)\right.\right.\nonumber\\
&\left.\left.\cdot\left(D^2f(\mathbf{Y}_n)-D^2f(\mathbf{Y}_n^{ij})\right)\left[(\lfloor n\cdot \rfloor -2)\mathbbm{1}_{[i/n,1]\cap[j/n,1]},(\lfloor n\cdot \rfloor -2)\mathbbm{1}_{[i/n,1]\cap[j/n,1]}\right]\vphantom{\frac{1}{n^2}}\right\rbrace\vphantom{\sum_1^2}\right|\nonumber\\
\leq&\frac{\|g\|_{M}}{12n^2}\sum_{1\leq i\neq j\leq n}\mathbbm{E}\left[\left|(I_{ij}-2pI_{ij}+p)\right|\left\|\mathbf{Y}_n-\mathbf{Y}_n^{ij}\right\|\right]\label{eq_star},
\end{align}
where (\ref{eq_star}) follows from Proposition \ref{prop12.7}. Now,
$$\left\|\mathbf{Y}_n-\mathbf{Y}_n^{ij}\right\|\leq\frac{1}{n}\left|I_{ij}-I_{ij}'\right|$$
and so, by (\ref{eq_star}),
\begin{align}
\epsilon_2\leq&\frac{\|g\|_{M}}{12n^3}\sum_{1\leq i\neq j\leq n}\mathbbm{E}\left[\left|I_{i,j}-2pI_{i,j}+p\right|\left|I_{ij}-I_{ij}'\right|\right]
\leq\frac{\|g\|_{M}}{12n},\label{4_int}
\end{align}
where the last inequality holds because $|I_{ij}-2pI_{ij}+p|\leq 1$ and $|I_{ij}-I_{ij}'|\leq 1$ for all $i,j\in\lbrace 1,\cdots,n\rbrace$.

Using Theorem \ref{theorem1} together with (\ref{4_int}) and (\ref{4.1.1.1}) gives the desired result.
\end{proof}

\subsection{Distance from the continuous process}
We now establish a bound on the rate of convergence of $\mathbf{Y}_n$ to a continuous Gaussian process whose covariance is the limit of the covariance of $\mathbf{D}_n$. We do this by bounding the distance between $\mathbf{D}_n$ and the continuous process via the Brownian modulus of continuity and using Theorem \ref{theorem_pre_limiting}.
 \begin{theorem}\label{theorem_continuous}
Let $\mathbf{Y}_n$ be defined as in Subsection \ref{section1} and let $\mathbf{Z}$ be defined by:
$$
\mathbf{Z}(t)=\left(\frac{p(1-p)}{2}\right)^{1/2}t\mathbf{B}(t^2),\quad t\in[0,1],$$
where $\mathbf{B}$ is a standard Brownian Motion. Then, for any $g\in M^2$:
$$\left|\mathbbm{E}g(\mathbf{Y}_n)-\mathbbm{E}g(\mathbf{Z})\right|\leq \|g\|_{M^2}\left(9n^{-1}+90n^{-1/2}\sqrt{\log n}\right).$$
\end{theorem}
\begin{remark}
Theorem \ref{theorem_continuous}, together with Proposition \ref{prop_m}, implies that $\mathbf{Y}_n$ converges to $\mathbf{Z}$ in distribution with respect to the Skorokhod and uniform topologies.
\end{remark}
In \textbf{Step 1} of the proof of Theorem \ref{theorem_continuous}, we provide a coupling between $\mathbf{D}_n$ and a standard Brownian motion. Using this Brownian motion, we construct a process $\mathbf{Z}_n$ having the same distribution as $\mathbf{D}_n$. In \textbf{Step 2} we couple $\mathbf{Z}_n$ and $\mathbf{Z}$ and bound the first two moments of the supremum distance between them, using the Brownian modulus of continuity. In \textbf{Step 3} we use those bounds together with the Mean Value Theorem to obtain Theorem \ref{theorem_continuous}.
\begin{proof}[Proof of Theorem \ref{theorem_continuous}]~\\
\textbf{Step 1.}
Let $\mathbf{B}$ be a standard Brownian motion and let $\mathbf{Z}_n$ be defined by:
\begin{align*}
\mathbf{Z}_n(t)=&\frac{(\lfloor nt\rfloor -2)\sqrt{p(1-p)}}{n\sqrt{2}}\mathbf{B}\left(\frac{\lfloor nt\rfloor(\lfloor nt\rfloor -1)}{n^2}\right).
\end{align*}
Now, note that $\mathbf{D}_n\stackrel{\mathcal{D}}=\mathbf{Z}_n$. To see this, observe that for all $u,t\in[1/n,1]$,
\begin{align}
\mathbbm{E}\mathbf{D}_n(t)\mathbf{D}_n(u)=&(\lfloor nt\rfloor -2)(\lfloor nu\rfloor -2)\lfloor n(t\wedge u)\rfloor(\lfloor n(t\wedge u)\rfloor-1)\frac{p(1-p)}{2n^4}\nonumber\\
=&\mathbbm{E}\mathbf{Z}_n(t)\mathbf{Z}_n(u).\label{cov_structure}
\end{align}

\textbf{Step 2.} We let $\mathbf{Z}$ and $\mathbf{Z}_n$ be coupled in such a way that $\mathbf{Z}$ is constructed as in Theorem \ref{theorem_continuous}, using the same Brownian Motion $\mathbf{B}$, as the one used in the construction of $\mathbf{Z}_n$. In Lemma \ref{lemma10_app}, proved in the appendix, we derive bounds for moments of the supremum distance between $\mathbf{Z}$ and $\mathbf{Z}_n$:
\begin{align}
&\mathbbm{E}\left\|\mathbf{Z}_n-\mathbf{Z}\right\|\leq\frac{3}{\sqrt{2}n}+\frac{36\sqrt{\log n}}{\sqrt{n}}\nonumber\\
&\mathbbm{E}\left\|\mathbf{Z}_n-\mathbf{Z}\right\|^2\leq\frac{5}{n^2}+\frac{49\log n}{n}\nonumber\\
&\mathbbm{E}\|\mathbf{Z}\|^2\leq \frac{1}{2}.\label{4.1.2.1}
\end{align}

\textbf{Step 3.} We note that $\|Dg(w)\|\leq \|g\|_{M^{2}}(1+\|w\|)$ and therefore, by (\ref{4.1.2.1}):
\begin{align*}
\left|\mathbbm{E}g(\mathbf{Z})-\mathbbm{E}g(\mathbf{D}_n)\right|\stackrel{\text{MVT}}\leq&\mathbbm{E}\left[\sup_{c\in[0,1]}\left\|Dg(\mathbf{Z}+c(\mathbf{Z}_n-\mathbf{Z}))\right\|\|\mathbf{Z}-\mathbf{Z}_n\|\right]\\
\leq&\|g\|_{M^2}\mathbbm{E}\left[\sup_{c\in[0,1]}\left(1+\|\mathbf{Z}+c(\mathbf{Z}_n-\mathbf{Z})\|\right)\|\mathbf{Z}-\mathbf{Z}_n\|\right]\\
\leq&\|g\|_{M^{2}}\mathbbm{E}\left[\|\mathbf{Z}-\mathbf{Z}_n\|+\|\mathbf{Z}\|\|\mathbf{Z}-\mathbf{Z}_n\|+\|\mathbf{Z}-\mathbf{Z}_n\|^2\right]\\
\leq&\|g\|_{M^{2}}\left[\mathbbm{E}\|\mathbf{Z}-\mathbf{Z}_n\|+\sqrt{\mathbbm{E}\|\mathbf{Z}\|^2\mathbbm{E}\|\mathbf{Z}-\mathbf{Z}_n\|^2}+\mathbbm{E}\|\mathbf{Z}-\mathbf{Z}_n\|^2\right]\\
\leq&\|g\|_{M^2}\left(\left(\frac{3+\sqrt{5}}{\sqrt{2}}+5\right)n^{-1}+90n^{-1/2}\sqrt{\log n}\right),
\end{align*}
which, together with Theorem \ref{theorem_pre_limiting} gives the desired result.
\end{proof}
\begin{remark}
The representation of $\mathbf{Z}$ in terms of a Brownian motion comes from a careful analysis of the limiting covariance of $\mathbf{D}_n$. Indeed, (\ref{cov_structure}) provides an explicit derivation of the covariance, which converges to the covariance of $\mathbf{Z}$.
\end{remark}

\section*{Acknowledgements}
The author would like to thank Gesine Reinert for many helpful discussions and comments on the early versions of this work. The author is also grateful to Andrew Barbour, Christian D{\"o}bler, Ivan Nourdin, Giovanni Peccati and Yvik Swan for many interesting discussions. This research has been supported by an EPSRC PhD Studentship at the University of Oxford and the FNR grant FoRGES (R-AGR- 3376-10) at the University of Luxembourg.

\bibliographystyle{plain}
\bibliography{Bibliography}

\begin{thebibliography}{10}

\bibitem{diffusion}
A.D. Barbour.
\newblock {Stein's Method for Diffusion Approximation}.
\newblock {\em Probability Theory and Related Fields}, 84:297--322, 1990.

\bibitem{functional_combinatorial}
A.D. Barbour and S.~Janson.
\newblock {A functional combinatorial central limit theorem}.
\newblock {\em Electronic Journal of Probability}, 14(81):2352--2370, 2009.

\bibitem{decreusefond2}
E.~Besan{\c c}on, L.~Decreusefond, and P.~Moyal.
\newblock {Stein's method for diffusive limit of Markov processes}.
\newblock arXiv:1805.01691, 2018.

\bibitem{billingsley1}
P.~Billingsley.
\newblock {\em {Convergence of Probability Measures}}.
\newblock Wiley, New York, 1968.

\bibitem{bolthausen}
E.~Bolthausen.
\newblock An estimate of the remainder in a combinatorial central limit
  theorem.
\newblock {\em Zeitschrift f\"ur Wahrscheinlichkeitstheorie und Verwandte
  Gebiete}, 66(3):379--386, 1984.

\bibitem{campese}
S.~Bourguin and S.~Campese.
\newblock {Approximation of Hilbert-valued Gaussian measures on Dirichlet
  structures}.
\newblock arXiv:1905.05127, 2019.

\bibitem{chatterjee}
S.~Chatterjee, P.~Diaconis, and E.~Meckes.
\newblock {Exchangeable pairs and Poisson approximation}.
\newblock {\em Probability Surveys}, 2:64--106, 2005.

\bibitem{chatterjee1}
S.~Chatterjee, J.~Fulman, and A.~Röllin.
\newblock {Exponential approximation by Stein’s method and spectral graph
  theory}.
\newblock {\em ALEA Lat. Am. J. Probab. Math. Stat}, 2011.

\bibitem{meckes}
S.~Chatterjee and E.~Meckes.
\newblock Multivariate normal approximation using exchangeable pairs.
\newblock {\em ALEA Lat. Am. J. Probab. Math. Stat.}, 4:257--283, 2008.

\bibitem{chen2015}
Louis~H.Y. Chen and Xiao Fang.
\newblock On the error bound in a combinatorial central limit theorem.
\newblock {\em Bernoulli}, 21(1):335--359, 02 2015.

\bibitem{Coutin}
L.~Coutin and L.~Decreusefond.
\newblock {Stein's method for Brownian Approximations}.
\newblock {\em Communications on Stochastic Analysis}, 7(3):349--372, 2013.

\bibitem{decreusefond_higher}
L.~Coutin and L.~Decreusefond.
\newblock {Higher order expansions via Stein's method}.
\newblock {\em Communications on Stochastic Analysis}, 8(2):155--168, 2014.

\bibitem{decreusefond_rough}
L.~Coutin and L.~Decreusefond.
\newblock {Stein's method for rough paths}.
\newblock {\em Potential Analysis}, 2019.
\newblock in press.

\bibitem{dobler}
Ch. D{\"o}bler.
\newblock {Stein's method of exchangeable pairs for the Beta distribution and
  generalizations}.
\newblock {\em Electronic Journal of Probability}, 20:34 pp., 2015.

\bibitem{dudley_book}
R.~M. Dudley.
\newblock {\em Real Analysis and Probability:}.
\newblock Cambridge Studies in Advanced Mathematics. Cambridge University
  Press, 2 edition, 2002.

\bibitem{ethier}
S.N. Ethier and T.G. Kurtz.
\newblock {\em Markov processes: characterization and convergence}.
\newblock Wiley, New York, 1986.

\bibitem{ito_processes}
M.~Fischer and G.~Nappo.
\newblock {On the Moments of the Modulus of Continuity of Ito Processes}.
\newblock {\em Stochastic Analysis and Applications}, 28(1):103--122, 2010.

\bibitem{goldstein}
L.~Goldstein.
\newblock Berry-{E}sseen bounds for combinatorial central limit theorems and
  pattern occurrences, using zero and size biasing.
\newblock {\em Journal of Applied Probability}, 42(3):661--683, 2005.

\bibitem{chen_ho}
S.-T. Ho and L.H.Y. Chen.
\newblock {An $L_p$ Bound for the remainder in a combinatorial central limit
  theorem}.
\newblock {\em The Annals of Probability}, 6(2):231--249, 04 1978.

\bibitem{Janson1991}
S.~Janson and K.~Nowicki.
\newblock {The asymptotic distributions of generalized U-statistics with
  applications to random graphs}.
\newblock {\em Probability Theory and Related Fields}, 90(3):341--375, Sep
  1991.

\bibitem{kasprzak1}
M.J. Kasprzak.
\newblock {Diffusion approximations via Stein's method and time changes}.
\newblock arXiv:1701.07633, 2017.

\bibitem{kasprzak2}
M.J. Kasprzak.
\newblock {Stein's method for multivariate Brownian approximations of sums
  under dependence}.
\newblock {\em Stochastic Processes and their Applications}, 2020.
\newblock in press.

\bibitem{kasprzak}
M.J. Kasprzak, A.B. Duncan, and S.J. Vollmer.
\newblock {Note on A. Barbour's paper on Stein's method for diffusion
  approximations}.
\newblock {\em Electronic Communications in Probability}, 22(23):1--8, 2017.

\bibitem{meckes09}
E.~Meckes.
\newblock {\em On Stein's method for multivariate normal approximation},
  volume~5 of {\em Collections}, pages 153--178.
\newblock Institute of Mathematical Statistics, Beachwood, Ohio, USA, 2009.

\bibitem{neammanee}
K.~Neammanee and N.~Rerkruthairat.
\newblock An improvement of a uniform bound on a combinatorial central limit
  theorem.
\newblock {\em Communications in Statistics. Theory and Methods},
  41(9):1590--1602, 2012.

\bibitem{nourdin}
I.~Nourdin and G.~Peccati.
\newblock {\em Normal Approximations with Malliavin Calculus}.
\newblock Cambridge tracts in Mathematics. Cambridge University Press, 2012.

\bibitem{reinert_roellin}
G.~Reinert and A.~R{\"o}llin.
\newblock {Multivariate normal approximation with Stein's method of
  exchangeable pairs under a general linearity condition}.
\newblock {\em The Annals of Probability}, 37(6):2150--2173, 2009.

\bibitem{reinert_roellin1}
G.~Reinert and A.~R{\"o}llin.
\newblock Random subgraph counts and $u$-statistics: Multivariate normal
  approximation via exchangeable pairs and embedding.
\newblock {\em Journal of Applied Probability}, 47(2):378--393, 2010.

\bibitem{rinott}
Y.~Rinott and V.~Rotar.
\newblock {On coupling constructions and rates in the CLT for dependent
  summands with applications to the antivoter model and weighted
  $U$-statistics}.
\newblock {\em The Annals of Applied Probability}, 7(4):1080--1105, 1997.

\bibitem{roellin1}
A.~R{\"o}llin.
\newblock Translated poisson approximation using exchangeable pair couplings.
\newblock {\em The Annals of Applied Probability}, 17(5/6):1596--1614, 2007.

\bibitem{shih}
H.-H. Shih.
\newblock {On Stein's method for infinite-dimensional Gaussian approximation in
  abstract Wiener spaces}.
\newblock {\em Journal of Functional Analysis}, 261(5):1236 -- 1283, 2011.

\bibitem{stein}
Ch. Stein.
\newblock {A bound for the error in the normal approximation to the
  distribution of a sum of dependent random variables}.
\newblock {\em Proc. Sixth Berkeley Symp. on Math. Statist. and Prob.},
  2:583--602, 1972.

\bibitem{stein1}
Ch. Stein.
\newblock {\em {Approximate Computation of Expectations}}.
\newblock Institute of Mathematical Statistics Lecture Notes, Monograph Series,
  7. Hayward, Calif. Institute of Mathematical Statistics, 1986.

\bibitem{wald1940}
A.~Wald and J.~Wolfowitz.
\newblock On a test whether two samples are from the same population.
\newblock {\em Annals of Mathematical Statistics}, 11(2):147--162, 06 1940.

\bibitem{wald_wolfowitz}
A.~Wald and J.~Wolfowitz.
\newblock Statistical tests based on permutations of the observations.
\newblock {\em Annals of Mathematical Statistics}, 15:358--372, 1944.

\end{thebibliography}
\section{Appendix - technical details of the proofs of Theorems \ref{combi_pre_lim}, and \ref{theorem_continuous}}
\subsection{Technical details of the proof of Theorem \ref{combi_pre_lim}}
\begin{lemma}\label{lemma1_app}
In the setup of Theorem \ref{theorem_pre_limiting} and for $\epsilon_2$ defined by Theorem \ref{theorem1},
\begin{align*}
\epsilon_2=&\left|\frac{1}{2\lambda_n}\mathbbm{E}D^2f(\mathbf{Y}_n)\left[\mathbf{Y}_n-\mathbf{Y}_n',\mathbf{Y}_n-\mathbf{Y}_n'\right]-\mathbbm{E}D^2f(\mathbf{Y}_n)[\mathbf{D}_n,\mathbf{D}_n]\right|\nonumber\\
\leq& A+B,
\end{align*}
for
\begin{align}
A=&\left|\frac{1}{n(n-1)s_n^2}\underset{i\neq j,k\neq l}{\sum_{1\leq i,j,k,l\leq n}}\mathbbm{E}\left\lbrace\left[\frac{(X_{ik}-X_{il})^2}{2n}-\frac{\hat{Z}_i^2}{n-1}\right]\right.\left(D^2f(\mathbf{Y}_{n,ijkl})-D^2f\left(\mathbf{Y}_{n}^{ijkl}\right)\right)\left[\mathbbm{1}_{[i/n,1]}\mathbbm{1}_{[i/n,1]}\right]\vphantom{\left[\frac{1^2}{2^2}\right]}\right\rbrace\nonumber\\
&+\frac{1}{n(n-1)s_n^2}\underset{i\neq j,k\neq l}{\sum_{1\leq i,j,k,l\leq n}}\mathbbm{E}\left\lbrace\left[\frac{(X_{ik}-X_{il})(X_{jl}-X_{jk})}{2n}-\hat{Z}_i\hat{Z}_j\right]\right.\nonumber\\
&\left.\left.\hphantom{\frac{1}{n(n-1)s_n^2}\underset{i\neq j,k\neq l}{\sum_{1\leq i,j,k,l\leq n}}\mathbbm{E}}\cdot\left(D^2f(\mathbf{Y}_{n,ijkl})-D^2f\left(\mathbf{Y}_{n}^{ijkl}\right)\right)\left[\mathbbm{1}_{[i/n,1]},\mathbbm{1}_{[j/n,1]}\right]\vphantom{\left[\frac{1^2}{2^2}\right]}\right\rbrace\vphantom{\frac{1}{2}\underset{2}{\sum_{2}}}\right|\nonumber,\\
B=&\left|\frac{1}{n(n-1)s_n^2}\underset{i\neq j,k\neq l}{\sum_{1\leq i,j,k,l\leq n}}\mathbbm{E}\left\lbrace\left[\frac{(X_{ik}-X_{il})^2}{2n}-\frac{\hat{Z}_i^2}{n-1}\right]D^2f\left(\mathbf{Y}_{n}^{ijkl}\right)\left[\mathbbm{1}_{[i/n,1]}\mathbbm{1}_{[i/n,1]}\right]\right\rbrace\right.\nonumber\\
&\left.+\frac{1}{n(n-1)s_n^2}\underset{i\neq j,k\neq l}{\sum_{1\leq i,j,k,l\leq n}}\mathbbm{E}\left\lbrace\left[\frac{(X_{ik}-X_{il})(X_{jl}-X_{jk})}{2n}-\hat{Z}_i\hat{Z}_j\right]D^2f\left(\mathbf{Y}_{n}^{ijkl}\right)\left[\mathbbm{1}_{[i/n,1]},\mathbbm{1}_{[j/n,1]}\right]\vphantom{\left[\frac{1^2}{2^2}\right]}\right\rbrace\vphantom{\frac{1^2}{2}\underset{2}{\sum_{2}}}\right|.\nonumber
\end{align}
\end{lemma}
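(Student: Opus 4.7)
The assertion is an exact algebraic identity closed off with the triangle inequality, so the plan is to rewrite each of the two expectations defining $\epsilon_2$ as a single sum indexed by $\{(i,j,k,l): i\neq j,\; k\neq l\}$, introduce the auxiliary independent copy $\mathbf{Y}_n^{ijkl}$, and split $\epsilon_2$ into the part that measures the change-of-base-point $\mathbf{Y}_{n,ijkl}\leadsto\mathbf{Y}_n^{ijkl}$ (which becomes $A$) and the part that measures the coefficient mismatch between the conditional second moments of $\mathbf{Y}_n-\mathbf{Y}_n'$ and the covariances of the $\hat Z_i$'s (which becomes $B$).

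First I would use the construction of $\mathbf{Y}_n'$ in Subsection \ref{ex_section} to see that on the event $\{I=i,J=j,\pi(i)=k,\pi(j)=l\}$ one has $s_n(\mathbf{Y}_n-\mathbf{Y}_n')=(X_{ik}-X_{il})\mathbbm{1}_{[i/n,1]}+(X_{jl}-X_{jk})\mathbbm{1}_{[j/n,1]}$. Substituting this into $D^2f(\mathbf{Y}_n)[(\mathbf{Y}_n-\mathbf{Y}_n')\Lambda_n,\mathbf{Y}_n-\mathbf{Y}_n']$ with $\Lambda_n=(n-1)/4$, expanding the bilinear form, and applying the tower property conditional on $(I,J)$ (uniform on ordered pairs $i\neq j$, mass $1/(n(n-1))$) and then on $(\pi(i),\pi(j))$ (uniform on $k\neq l$, conditional mass $1/(n(n-1))$), I get a $\sum_{i\neq j,k\neq l}$ involving $\mathbf{Y}_{n,ijkl}$, using $\mathcal{L}(\mathbf{Y}_n\mid\pi(i)=k,\pi(j)=l)=\mathcal{L}(\mathbf{Y}_{n,ijkl})$. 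The "$ii$" and "$jj$" diagonal contributions collapse under the relabelling $(i,k)\leftrightarrow(j,l)$ since $\mathbf{Y}_{n,jilk}=\mathbf{Y}_{n,ijkl}$, leaving
$$\frac{1}{n(n-1)s_n^2}\sum_{\substack{i\neq j\\ k\neq l}}\mathbbm{E}\!\left\{\tfrac{(X_{ik}-X_{il})^2}{2n}D^2f(\mathbf{Y}_{n,ijkl})[\mathbbm{1}_{[i/n,1]},\mathbbm{1}_{[i/n,1]}]+\tfrac{(X_{ik}-X_{il})(X_{jl}-X_{jk})}{2n}D^2f(\mathbf{Y}_{n,ijkl})[\mathbbm{1}_{[i/n,1]},\mathbbm{1}_{[j/n,1]}]\right\}.$$

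In parallel, I would expand $\mathbf{D}_n=s_n^{-1}\sum_i\hat Z_i\mathbbm{1}_{[i/n,1]}$ to write $\mathbbm{E}D^2f(\mathbf{Y}_n)[\mathbf{D}_n,\mathbf{D}_n]=s_n^{-2}\sum_{i,j}\mathbbm{E}[\hat Z_i\hat Z_j]\,\mathbbm{E}D^2f(\mathbf{Y}_n)[\mathbbm{1}_{[i/n,1]},\mathbbm{1}_{[j/n,1]}]$, which uses independence of $(\hat Z_r)$ from $\mathbf{Y}_n$. Splitting into the diagonal ($i=j$) and off-diagonal ($i\neq j$) pieces and inflating the indices by the trivial identities $\sum_ig(i)=\frac{1}{n(n-1)^2}\sum_{i\neq j,k\neq l}g(i)$ and $\sum_{i\neq j}g(i,j)=\frac{1}{n(n-1)}\sum_{i\neq j,k\neq l}g(i,j)$ produces, respectively, the coefficients $\hat Z_i^2/(n-1)$ and $\hat Z_i\hat Z_j$. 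Because $\mathbf{Y}_n^{ijkl}\stackrel{\mathcal{D}}{=}\mathbf{Y}_n$ and $(\hat Z_r)$ is independent of $\mathbf{Y}_n^{ijkl}$, each product $\mathbbm{E}\hat Z\cdot\mathbbm{E}D^2f(\mathbf{Y}_n)[\cdot]$ may be re-factored as $\mathbbm{E}[\hat Z\,D^2f(\mathbf{Y}_n^{ijkl})[\cdot]]$. Adding and subtracting inside $\epsilon_2$ the intermediate sum obtained by replacing $\mathbf{Y}_{n,ijkl}$ by $\mathbf{Y}_n^{ijkl}$ (with the random weights $(X_{ik}-X_{il})^2/(2n)$ and $(X_{ik}-X_{il})(X_{jl}-X_{jk})/(2n)$ kept unchanged) and then applying the triangle inequality yields $\epsilon_2\leq A+B$ with exactly the summands stated.

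The main obstacle is purely combinatorial bookkeeping. The prefactor $\tfrac{1}{2n}$ against $(X_{ik}-X_{il})^2$ and $(X_{ik}-X_{il})(X_{jl}-X_{jk})$ in $A$ and $B$ must line up with the product $\Lambda_n\cdot\mathbbm{P}(I=i,J=j)\cdot\mathbbm{P}(\pi(i)=k,\pi(j)=l\mid I=i,J=j)\cdot 2$, where the final factor $2$ comes from merging the symmetric diagonal contributions; simultaneously, the $1/(n-1)$ in front of $\hat Z_i^2$ has to come out of the $n(n-1)^2$-inflation used to rewrite $\sum_i$ as $\sum_{i\neq j,k\neq l}$, and must be kept distinct from the asymmetric cross term where no such extra factor appears. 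Once this accounting is carried out without arithmetic error, the result is immediate.
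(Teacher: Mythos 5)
Your proposal is correct and follows essentially the same route as the paper's proof: expand the exchangeable-pair difference via the conditional couplings $\mathbf{Y}_{n,ijkl}$, expand $\mathbbm{E}D^2f(\mathbf{Y}_n)[\mathbf{D}_n,\mathbf{D}_n]$ using independence of the $\hat{Z}_i$'s and inflate the index sets to match, then add and subtract the $\mathbf{Y}_n^{ijkl}$-based terms and apply the triangle inequality. The only (immaterial) difference is that you re-factor $\mathbbm{E}[\hat{Z}_i\hat{Z}_j]\,\mathbbm{E}D^2f(\mathbf{Y}_n)[\cdot]$ directly into $\mathbbm{E}[\hat{Z}_i\hat{Z}_j\,D^2f(\mathbf{Y}_n^{ijkl})[\cdot]]$, whereas the paper first conditions to introduce $\mathbf{Y}_{n,ijkl}$ on the Gaussian side as well; both yield the identical $A$ and $B$.
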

\begin{proof}
Note that
\begin{align}
\epsilon_2=&\left|\frac{1}{2\lambda_n}\mathbbm{E}D^2f(\mathbf{Y}_n)\left[(\mathbf{Y}_n-\mathbf{Y}_n'),\mathbf{Y}_n-\mathbf{Y}_n'\right]-\mathbbm{E}D^2f(\mathbf{Y}_n)[\mathbf{D}_n,\mathbf{D}_n]\right|\nonumber\\
=&\left|\frac{n-1}{4}\mathbbm{E}D^2f(\mathbf{Y}_n)[\mathbf{Y}_n-\mathbf{Y}_n',\mathbf{Y}_n-\mathbf{Y}_n']-\mathbbm{E}D^2f(\mathbf{Y}_n)[\mathbf{D}_n,\mathbf{D}_n]\right|\label{eps222}
\end{align}
and
\begin{align}
&\frac{n-1}{4}\mathbbm{E}D^2f(\mathbf{Y}_n)[\mathbf{Y}_n-\mathbf{Y}_n',\mathbf{Y}_n-\mathbf{Y}_n']-\mathbbm{E}D^2f(\mathbf{Y}_n)[\mathbf{D}_n,\mathbf{D}_n]\nonumber\\
=&\frac{1}{2ns_n^2}\sum_{i,j=1}^n\mathbbm{E}\left\lbrace(X_{i\pi(i)}-X_{i\pi(j)})^2D^2f(\mathbf{Y}_n)\left[\mathbbm{1}_{[i/n,1]}\mathbbm{1}_{[i/n,1]}\right]\right\rbrace\nonumber\\
&+\frac{1}{2ns_n^2}\sum_{i,j=1}^n\mathbbm{E}\left\lbrace(X_{i\pi(i)}-X_{i\pi(j)})(X_{j\pi(j)}-X_{j\pi(i)})D^2f(\mathbf{Y}_n)\left[\mathbbm{1}_{[i/n,1]},\mathbbm{1}_{[j/n,1]}\right]\right\rbrace -\mathbbm{E}D^2f(\mathbf{Y}_n)[\mathbf{D}_n,\mathbf{D}_n]\nonumber\\
=&\frac{1}{2n^2(n-1)s_n^2}\underset{i\neq j,k\neq l}{\sum_{1\leq i,j,k,l\leq n}}\mathbbm{E}\left\lbrace(X_{ik}-X_{il})^2\left.\cdot D^2f(\mathbf{Y}_n)\left[\mathbbm{1}_{[i/n,1]}\mathbbm{1}_{[i/n,1]}\right]\right|\pi(i)=k,\pi(j)=l\right\rbrace\nonumber\\
&+\frac{1}{n(n-1)s_n^2}\underset{i\neq j,k\neq l}{\sum_{1\leq i,j,k,l\leq n}}\mathbbm{E}\left\lbrace\left[\frac{(X_{ik}-X_{il})(X_{jl}-X_{jk})}{2n}\right]D^2f(\mathbf{Y}_n)\left[\mathbbm{1}_{[i/n,1]},\mathbbm{1}_{[j/n,1]}\right]\left.\vphantom{\frac{1}{2}}\right|\pi(i)=k,\pi(j)=l\vphantom{\frac{1}{2}}\right\rbrace\nonumber\\
&-\frac{1}{s_n^2}\sum_{1\leq i\neq j\leq n}\mathbbm{E}[\hat{Z}_i\hat{Z}_j]\mathbbm{E}D^2f(\mathbf{Y}_n)[\mathbbm{1}_{[i/n,1]},\mathbbm{1}_{[j/n,1]}]-\frac{1}{(n-1)s_n^2}\sum_{1\leq i\neq j\leq n}\mathbbm{E}[\hat{Z}_i^2]\mathbbm{E}D^2f(\mathbf{Y}_n)[\mathbbm{1}_{[i/n,1]},\mathbbm{1}_{[i/n,1]}]\nonumber\\
=&\frac{1}{2n^2(n-1)s_n^2}\underset{i\neq j,k\neq l}{\sum_{1\leq i,j,k,l\leq n}}\mathbbm{E}\left\lbrace (X_{ik}-X_{il})^2 D^2f(\mathbf{Y}_{n,ijkl})\left[\mathbbm{1}_{[i/n,1]}\mathbbm{1}_{[i/n,1]}\right]\right\rbrace\nonumber\\
&+\frac{1}{n(n-1)s_n^2}\underset{i\neq j,k\neq l}{\sum_{1\leq i,j,k,l\leq n}}\mathbbm{E}\left\lbrace\frac{(X_{ik}-X_{il})(X_{jl}-X_{jk})}{2n}D^2f(\mathbf{Y}_{n,ijkl})\left[\mathbbm{1}_{[i/n,1]},\mathbbm{1}_{[j/n,1]}\right]\right\rbrace\nonumber\\
&-\frac{1}{n(n-1)s_n^2}\underset{i\neq j,k\neq l}{\sum_{1\leq i,j,k,l\leq n}}\mathbbm{E}[\hat{Z}_i\hat{Z}_j]\mathbbm{E}D^2f(\mathbf{Y}_{n,ijkl})[\mathbbm{1}_{[i/n,1]},\mathbbm{1}_{[j/n,1]}]\nonumber\\
&-\frac{1}{n(n-1)^2s_n^2}\underset{i\neq j,k\neq l}{\sum_{1\leq i,j,k,l\leq n}}\mathbbm{E}[\hat{Z}_i^2]\mathbbm{E}D^2f(\mathbf{Y}_{n,ijkl})[\mathbbm{1}_{[i/n,1]},\mathbbm{1}_{[i/n,1]}].\label{eps22}
\end{align}
Now, the lemma follows by taking the absolute value in (\ref{eps22}) and combining it with (\ref{eps222}).
\end{proof}
\begin{lemma}\label{lemma2_app}
For $A$ of (\ref{a_def}),
\begin{align*}
A\leq&\frac{\|g\|_{M^1}}{n^3(n-1)s_n^3}\sum_{1\leq i,j,k,l,u\leq n}\left\lbrace\vphantom{\sum_1^2}\mathbbm{E}|X_{ik}|^3+5\mathbbm{E}|X_{ik}|\mathbbm{E}|X_{il}|^2+7\mathbbm{E}|X_{ik}|^2\mathbbm{E}|X_{jl}|\right.\nonumber\\
&+5\mathbbm{E}|X_{ik}|^2\mathbbm{E}|X_{jk}|+16\mathbbm{E}|X_{ik}|\mathbbm{E}|X_{il}|\mathbbm{E}|X_{jl}|+2\mathbbm{E}|X_{iu}|\mathbbm{E}|X_{ik}|\mathbbm{E}|X_{il}|\nonumber\\
&+4\mathbbm{E}|X_{iu}|\mathbbm{E}|X_{il}|\mathbbm{E}|X_{jk}|+6\mathbbm{E}|X_{uk}|\mathbbm{E}|X_{ik}|\mathbbm{E}|X_{jl}|+2\mathbbm{E}|X_{uk}|\mathbbm{E}|X_{ik}|\mathbbm{E}|X_{jk}|\nonumber\\
&\left.+\frac{1}{n}\left(2\mathbbm{E}\left|X_{ik}\right|+2\mathbbm{E}\left|X_{jl}\right|+2\mathbbm{E}|X_{uk}|+2\mathbbm{E}|X_{ul}|\right)\cdot\sum_{r=1}^n\left(\mathbbm{E}|X_{ir}|^2+|c_{ir}c_{jr}|\right)\right\rbrace.\nonumber
\end{align*}
\end{lemma}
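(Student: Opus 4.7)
The plan is to reduce $A$ to an expression controllable by the Lipschitz constant of $D^2 f$ and then bound the resulting quantity by carefully decomposing $Y_{n,ijkl}-Y_n^{ijkl}$. Since $f=\phi_n(g)$ with $g\in M^1$, Proposition \ref{prop12.7}(C) gives the Lipschitz estimate
$$\bigl\|D^2f(Y_{n,ijkl})-D^2f\bigl(Y_n^{ijkl}\bigr)\bigr\|\;\le\;\tfrac{\|g\|_{M^1}}{3}\bigl\|Y_{n,ijkl}-Y_n^{ijkl}\bigr\|.$$
Applying this to each of the two bilinear expressions in the definition of $A$, bounding $\|\mathbbm{1}_{[i/n,1]}\|=1$, and pulling absolute values inside, one reduces $A$ to
$$A\;\le\;\tfrac{\|g\|_{M^1}}{3n(n-1)s_n^{2}}\underset{i\neq j,\,k\neq l}{\sum}\mathbbm{E}\!\left\{\left(\tfrac{(X_{ik}-X_{il})^{2}}{2n}+\tfrac{\hat Z_i^{2}}{n-1}+\tfrac{|X_{ik}-X_{il}||X_{jl}-X_{jk}|}{2n}+|\hat Z_i\hat Z_j|\right)\!\bigl\|Y_{n,ijkl}-Y_n^{ijkl}\bigr\|\right\}.$$

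Next I would estimate $\|Y_{n,ijkl}-Y_n^{ijkl}\|$ by examining the four cases in the explicit coupling of $\pi_{ijkl}$ and $\pi$ from Subsection \ref{ex_section}. In every case the two processes agree except on at most four rows (the rows $i,j$ and the rows $\pi^{-1}(k),\pi^{-1}(l)$), and on those rows one replaces four summands $X_{i'\pi(i')}$ by four summands of the form $X_{i'\pi_{ijkl}(i')}$, while the array $\mathbbm{X}^{ijkl}$ differs from $\mathbbm{X}$ only in the four entries $X_{ik},X_{il},X_{jk},X_{jl}$. Hence there is a deterministic constant (at most $8$) number of indices $(a,b),(c,d)$ such that
$$\bigl\|Y_{n,ijkl}-Y_n^{ijkl}\bigr\|\;\le\;\tfrac{1}{s_n}\sum_{(a,b)}\bigl(|X_{ab}|+|X^{ijkl}_{cd}|\bigr),$$
where the summands involve only the indices $i,j,k,l$ and one auxiliary summation index $u\in[n]$ arising when one conditions on the value of $\pi^{-1}(k)$ or $\pi^{-1}(l)$. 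A further factor $1/n$ from averaging over $u$ will appear in some terms.

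The main obstacle is the bookkeeping in the expectation. Expanding the product of one of the four weight factors (the $X$-differences or $\hat Z$-product) with $\sum_{(a,b)}|X_{ab}|$, one obtains a sum of terms of the form $|X_{ik}|^{\alpha_1}|X_{il}|^{\alpha_2}|X_{jk}|^{\alpha_3}|X_{jl}|^{\alpha_4}|X_{ab}|$, and one must exploit the independence of the $X$-entries and the independence of $Y_n^{ijkl}$ from $\{X_{ik},X_{il},X_{jk},X_{jl}\}$ to factorise each expectation into a product of moments $\mathbbm{E}|X_{\cdot,\cdot}|^{p}$. For terms involving $\hat Z_i^{2}$ or $\hat Z_i\hat Z_j$, Gaussian absolute-moment identities combined with formulae (\ref{e_zi}) and (\ref{e_zi_zj}) yield the factor $\frac{1}{n}\sum_{r=1}^{n}(\mathbbm{E}|X_{ir}|^{2}+|c_{ir}c_{jr}|)$ that appears in the last line of the claimed bound. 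The remaining Berry--Esseen type terms, listed with explicit integer coefficients, arise by tabulating the at most $4\times 4=16$ cross-products $|X_{ab}|\cdot|X_{ik}-X_{il}|^{2}$ and $|X_{ab}|\cdot|(X_{ik}-X_{il})(X_{jl}-X_{jk})|$ through four passes (one per case of $\pi_{ijkl}$), expanding each difference, and using $\mathbbm{E}|X_{ab}-X_{cd}|^{\alpha}\le 2^{\alpha-1}(\mathbbm{E}|X_{ab}|^{\alpha}+\mathbbm{E}|X_{cd}|^{\alpha})$ together with $\mathbbm{E}|X_{ab}||X_{cd}|\le\mathbbm{E}|X_{ab}|\mathbbm{E}|X_{cd}|$ when the pair is independent. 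Summing the resulting contributions, extending certain index sums to a free index $u\in[n]$ where needed, and collecting coefficients yields the stated bound.
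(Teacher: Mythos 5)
Your overall route is the same as the paper's: apply the Lipschitz estimate for $D^2\phi_n(g)$ from Proposition \ref{prop12.7}(C), observe that $\mathbf{Y}_{n,ijkl}$ and $\mathbf{Y}_n^{ijkl}$ differ only on the (at most four) rows $i,j,\pi^{-1}(k),\pi^{-1}(l)$, condition on $\pi^{-1}(k),\pi^{-1}(l),\pi(i),\pi(j)$ to introduce the auxiliary index $u$ with a $1/n$ weight, factorise expectations using the independence of the entries of $\mathbbm{X}$ and of the primed copies, and feed in (\ref{e_zi}) and (\ref{e_zi_zj}). Up to the bookkeeping of the integer coefficients, which you defer, this is the paper's proof.

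There is, however, one step in your displayed reduction that does not work as written and would not give the stated bound. You pull the absolute value inside the expectation and arrive at a weight containing $|\hat Z_i\hat Z_j|$. Since the $\hat Z$'s are built from $\mathbbm{X}''$ and the $Z_{il}$'s, they are independent of everything else in the expression, so this term factorises as $\mathbbm{E}|\hat Z_i\hat Z_j|\cdot\mathbbm{E}\|\mathbf{Y}_{n,ijkl}-\mathbf{Y}_n^{ijkl}\|$; but $\mathbbm{E}|\hat Z_i\hat Z_j|$ is of order $\sqrt{\mathbbm{E}\hat Z_i^2\,\mathbbm{E}\hat Z_j^2}=\frac1n\bigl(\sum_r\mathbbm{E}X_{ir}^2\bigr)^{1/2}\bigl(\sum_r\mathbbm{E}X_{jr}^2\bigr)^{1/2}$, which is larger than the quantity $|\mathbbm{E}\hat Z_i\hat Z_j|\le\frac{1}{n(n-1)}\sum_r|c_{ir}c_{jr}|$ appearing (after rescaling) in the last line of the lemma by a factor of order $n^2$ in general, and is strictly positive even when all $c_{ir}=0$. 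No ``Gaussian absolute-moment identity'' recovers this loss: summed over $i\neq j$, $k\neq l$, your intermediate bound is worse than the claimed one by a factor of order $n^2$, which destroys the $O(n^{-1/2})$ rate. The fix is to exploit the independence of $\hat Z_i\hat Z_j$ from $\bigl(D^2f(\mathbf{Y}_{n,ijkl})-D^2f(\mathbf{Y}_n^{ijkl})\bigr)[\cdot,\cdot]$ \emph{before} taking absolute values, replacing $\hat Z_i\hat Z_j$ by the deterministic $\mathbbm{E}[\hat Z_i\hat Z_j]$ inside the bracket (and likewise $\hat Z_i^2$ by $\mathbbm{E}\hat Z_i^2$, though there the sign causes no harm); this is exactly what the paper's first display does, with weights $\bigl|\frac{(X_{ik}-X_{il})(X_{jl}-X_{jk})}{2n}-\mathbbm{E}(\hat Z_i\hat Z_j)\bigr|$. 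With that correction, the rest of your plan goes through.
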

\begin{proof}
Let us adopt the notation of the proof of Theorem \ref{combi_pre_lim}. Define index sets $\mathcal{I}=\lbrace i,j,\pi^{-1}(k),\pi^{-1}(l)\rbrace$ and $\mathcal{J}=\lbrace k,l,\pi(i),\pi(j)\rbrace$. Then, letting $\mathbf{S}=\frac{1}{s_n}\sum_{i'\not\in\mathcal{I}}X_{i'\pi(i')}\mathbbm{1}_{[i'/n,1]}$, we can write:
$$\mathbf{Y}_{n,ijkl}=\mathbf{S}+\frac{1}{s_n}\sum_{i'\in\mathcal{I}}X_{i'\pi_{ijkl}(i')}\mathbbm{1}_{[i'/n,1]},\qquad \mathbf{Y}_n^{ijkl}=\mathbf{S}+\frac{1}{s_n}\sum_{i'\in\mathcal{I}}X_{i'\pi(i')}^{ijkl}\mathbbm{1}_{[i'/n,1]}.$$
Since $\mathbf{S}$ depends only on the components of $\mathbbm{X}$ outside the square $\mathcal{I}\times\mathcal{J}$ and $\lbrace \pi(i):i\not\in\mathcal{I}\rbrace$, $\mathbf{S}$ is independent of:
$$\left\lbrace X_{il},X_{jk},X_{ik},X_{jl},\sum_{i'\in\mathcal{I}}X_{i'\pi_{ijkl}(i')},\sum_{i'\in\mathcal{I}}X_{i'\pi(i')}^{ijkl}\right\rbrace,$$
given $\pi^{-1}(k), \pi^{-1}(l),\pi(i),\pi(j)$.

Note that, by Proposition \ref{prop12.7},
\begin{align}
A\leq&\frac{\|g\|_{M^1}}{n(n-1)s_n^2}\underset{i\neq j,k\neq l}{\sum_{1\leq i,j,k,l\leq n}}\mathbbm{E}\left\lbrace\left\|\mathbf{Y}_{n,ijkl}-\mathbf{Y}_n^{ijkl}\right\|\left(\left|\frac{(X_{ik}-X_{il})^2}{2n}-\frac{\mathbbm{E}\hat{Z}_i^2}{n-1}\right|\right.\right.\nonumber\\
&\hphantom{\frac{\|g\|_{M^1}}{n(n-1)s_n^2}\underset{i\neq j,k\neq l}{\sum_{1\leq i,j,k,l\leq n}}\mathbbm{E}\leq}\left.\left.+\left|\frac{(X_{ik}-X_{il})(X_{jl}-X_{jk})}{2n}-\mathbbm{E}(\hat{Z}_i\hat{Z}_j)\right|\right)\right\rbrace\nonumber\\
\leq&\frac{\|g\|_{M^1}}{n(n-1)s_n^3}\underset{i\neq j,k\neq l}{\sum_{1\leq i,j,k,l\leq n}}\mathbbm{E}\left\lbrace\sum_{i'\in\mathcal{I}}\vphantom{\left|\frac{1^2}{1^2}\right|}\left|X_{i',\pi_{ijkl}(i')}-X_{i'\pi(i')}^{ijkl}\right|\right.\nonumber\\
&\left.\cdot\left(\left|\frac{(X_{ik}-X_{il})^2}{2n}-\frac{\mathbbm{E}\hat{Z}_i^2}{n-1}\right|+\left|\frac{(X_{ik}-X_{il})(X_{jl}-X_{jk})}{2n}-\mathbbm{E}(\hat{Z}_i\hat{Z}_j)\right|\right)\right\rbrace\nonumber\\
\leq& \frac{\|g\|_{M^1}}{n(n-1)s_n^3}\underset{i\neq j,k\neq l}{\sum_{1\leq i,j,k,l\leq n}}\mathbbm{E}\left\lbrace\vphantom{\frac{1^2}{2^2}}\left(\left|X_{ik}-X^{ijkl}_{i\pi(i)}\right|+\left|X_{jl}-X^{ijkl}_{j\pi(j)}\right|+\left|X_{\pi^{-1}(k),k}-X_{\pi^{-1}(k),k}^{ijkl}\right|+\left|X_{\pi^{-1}(l),l}-X_{\pi^{-1}(l),l}^{ijkl}\right|\right)\right.\nonumber\\
&\left.\cdot\left(\left|\frac{(X_{ik}-X_{il})^2}{2n}-\frac{\mathbbm{E}\hat{Z}_i^2}{n-1}\right|+\left|\frac{(X_{ik}-X_{il})(X_{jl}-X_{jk})}{2n}-\mathbbm{E}(\hat{Z}_i\hat{Z}_j)\right|\right)\right\rbrace\nonumber\\
\leq&\frac{\|g\|_{M^1}}{2n(n-1)^2s_n^3}\underset{i\neq j,k\neq l}{\sum_{1\leq i,j,k,l\leq n}}\mathbbm{E}\left\lbrace\left(\left|X_{ik}\right|+\left|X^{ijkl}_{i,\pi(i)}\right|+\left|X_{j,l}\right|+\left|X^{ijkl}_{j,\pi(j)}\right|+\left|X_{\pi^{-1}(k),k}\right|+\left|X_{\pi^{-1}(k),k}^{ijkl}\right|\right.\right.\nonumber\\
&\left.+\left|X_{\pi^{-1}(l),l}\right|+\left|X_{\pi^{-1}(l),l}^{ijkl}\right|\right)\left(\left|X_{ik}\right|^2+\left|X_{il}\right|^2+2\left|X_{ik}X_{il}\right|+2\mathbbm{E}\left|\hat{Z}_i\right|^2+\left|X_{ik}X_{jl}\right|+\left|X_{ik}X_{jk}\right|\right.\nonumber\\
&\left.\left.+\left|X_{il}X_{jl}\right|+\left|X_{il}X_{jk}\right|+2(n-1)\left|\mathbbm{E}(\hat{Z}_i\hat{Z}_j)\right|\right)\right\rbrace\nonumber\\
\leq&\frac{\|g\|_{M^1}}{2n^2(n-1)^2s_n^3}\underset{i\neq j,k\neq l}{\sum_{1\leq i,j,k,l,u\leq n}}\mathbbm{E}\left\lbrace\vphantom{\sum_i^j}\left(\left|X_{ik}\right|+\mathbbm{E}\left|X_{iu}\right|+\left|X_{j,l}\right|+\mathbbm{E}\left|X_{ju}\right|+\left|X_{uk}\right|+\mathbbm{E}\left|X_{uk}\right|+\left|X_{ul}\right|+\mathbbm{E}\left|X_{ul}\right|\right)\right.\nonumber\\
&\cdot\left(\left|X_{ik}\right|^2+\left|X_{il}\right|^2+2\left|X_{ik}X_{il}\right|+\frac{2}{n}\sum_{r=1}^n\mathbbm{E}|X_{ir}|^2+\left|X_{ik}X_{jl}\right|+\left|X_{ik}X_{jk}\right|\right.\nonumber\\
&\left.\left.\hphantom{.......}+\left|X_{il}X_{jl}\right|+\left|X_{il}X_{jk}\right|+\frac{2}{n}\left|\sum_{r=1}^nc_{ir}c_{jr}\right|\right)\right\rbrace\nonumber\\
\leq&\frac{\|g\|_{M^1}}{2n^2(n-1)^2s_n^3}\sum_{1\leq i,j,k,l,u\leq n}\left\lbrace\vphantom{\sum_i^j} \mathbbm{E}|X_{ik}|^3+5\mathbbm{E}|X_{ik}|^2\mathbbm{E}|X_{il}|+9\mathbbm{E}|X_{ik}|^2\mathbbm{E}|X_{jl}|+5\mathbbm{E}|X_{ik}|^2\mathbbm{E}|X_{jk}|\right.\nonumber\\
&+14\mathbbm{E}|X_{ik}|\mathbbm{E}|X_{il}|\mathbbm{E}|X_{jl}| +2\mathbbm{E}|X_{ik}|\mathbbm{E}|X_{il}|\mathbbm{E}|X_{iu}|+6\mathbbm{E}|X_{ik}|\mathbbm{E}|X_{jl}|\mathbbm{E}|X_{iu}|+2\mathbbm{E}[|X_{uk}||X_{ik}|^2]\nonumber\\
&+4\mathbbm{E}|X_{uk}X_{ik}|\mathbbm{E}|X_{il}|+4\mathbbm{E}|X_{uk}X_{ik}|\mathbbm{E}|X_{jl}|+8\mathbbm{E}|X_{uk}|\mathbbm{E}|X_{ik}|\mathbbm{E}|X_{jl}|+2\mathbbm{E}|X_{uk}X_{ik}X_{jk}|\nonumber\\
&\left.+2\mathbbm{E}|X_{uk}|\mathbbm{E}|X_{ik}|\mathbbm{E}|X_{jk}|+\frac{2}{n}\mathbbm{E}\left(2\left|X_{ik}\right|+6\left|X_{jl}\right|\right)\cdot\left(\sum_{r=1}^n\mathbbm{E}|X_{ir}|^2+\left|\sum_{r=1}^nc_{ir}c_{jr}\right|\right)\right\rbrace.\nonumber
\end{align}
which finishes the proof.
\end{proof}

\subsection{Technical details of the proof of Theorem \ref{theorem_continuous}}
\begin{lemma}\label{lemma10_app}
Using the notation of \textbf{Step 2} of the proof of Theorem \ref{theorem_continuous},
\begin{align*}
&\mathbbm{E}\left\|\mathbf{Z}_n-\mathbf{Z}\right\|\leq\frac{3}{\sqrt{2}n}+\frac{36\sqrt{\log n}}{\sqrt{n}}\\
&\mathbbm{E}\left\|\mathbf{Z}_n-\mathbf{Z}\right\|^2\leq\frac{5}{n^2}+\frac{49\log n}{n}\\
&\mathbbm{E}\|\mathbf{Z}\|^2\leq \frac{1}{2}.
\end{align*}
\end{lemma}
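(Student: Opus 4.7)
The three bounds all reduce to supremum-norm estimates on Brownian-motion functionals. My plan would be to use Brownian scaling to bring $\mathbf{Z}_n$ and $\mathbf{Z}$ onto a common time scale, where the difference becomes a small perturbation controlled by the Brownian modulus of continuity and Doob's $L^2$ maximal inequality.

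\textit{Setup via Brownian scaling.} For $i \in \{1,2\}$, set $\tilde{\mathbf{B}}_i^{(n)}(u) := n^{-1}\mathbf{B}_i(n^2 u)$, which is again a standard Brownian motion. With this notation (and interpreting the coupling of Step 2 as using the same $\tilde{\mathbf{B}}_i^{(n)}$ in both processes, so that the BMs live on a common time scale), the first coordinate rewrites as
\[
\mathbf{Z}_n^{(1)}(t) = c_1 a_n(t)\,\tilde{\mathbf{B}}_1^{(n)}(b_n(t)) + c_2 a_n(t)\,\tilde{\mathbf{B}}_2^{(n)}(b_n(t)), \qquad \mathbf{Z}^{(1)}(t) = c_1 t\,\tilde{\mathbf{B}}_1^{(n)}(t^2) + c_2 t\,\tilde{\mathbf{B}}_2^{(n)}(t^2),
\]
where $a_n(t) = (\lfloor nt\rfloor - 2)/n$, $b_n(t) = \lfloor nt\rfloor(\lfloor nt\rfloor - 1)/n^2$, and $c_1, c_2$ are read off from Step 1 of the proof of Theorem \ref{theorem_continuous}. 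The expressions for $\mathbf{Z}^{(2)}$ and the corresponding two principal summands of $\mathbf{Z}_n^{(2)}$ have exactly the same form (with different coefficients), and $\mathbf{Z}_n^{(2)}$ has in addition three auxiliary pieces built from $\mathbf{B}_3, \mathbf{B}_4, \mathbf{B}_5$, which do not appear in $\mathbf{Z}$ at all.

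\textit{Main estimates.} Using the telescoping
\[
a_n(t)\,\tilde{\mathbf{B}}(b_n(t)) - t\,\tilde{\mathbf{B}}(t^2) = (a_n(t) - t)\,\tilde{\mathbf{B}}(b_n(t)) + t\bigl[\tilde{\mathbf{B}}(b_n(t)) - \tilde{\mathbf{B}}(t^2)\bigr],
\]
together with the elementary bounds $\sup_{t \in [0,1]} |a_n(t) - t| \leq 3/n$ and $\sup_{t \in [0,1]} |b_n(t) - t^2| \leq 3/n$, the first summand is bounded in sup-norm by $(3/n) \sup_{s \in [0,1]} |\tilde{\mathbf{B}}(s)|$, whose $L^2$ norm is at most $2$ by Doob's $L^2$ maximal inequality. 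The second summand is dominated by $\sup_{|s-u| \leq 3/n,\,s,u\in[0,1]} |\tilde{\mathbf{B}}(s) - \tilde{\mathbf{B}}(u)|$, controlled by the global Brownian modulus of continuity in the explicit form
\[
\mathbbm{E}\sup_{\substack{s,u \in [0,1]\\|s-u|\leq h}} |\tilde{\mathbf{B}}(s) - \tilde{\mathbf{B}}(u)|^q \leq C_q \bigl(h \log(1/h)\bigr)^{q/2}, \qquad q \in \{1,2\},
\]
yielding $O(\sqrt{(\log n)/n})$ in $L^1$ and $O((\log n)/n)$ in $L^2$. The auxiliary $\mathbf{B}_3, \mathbf{B}_4, \mathbf{B}_5$ contributions to $\mathbf{Z}_n^{(2)}$ are estimated directly by Doob: since $\sup_{t \in [0,1]} \lfloor nt\rfloor(\lfloor nt\rfloor - 1) \leq n^2$, one obtains $\mathbbm{E}\sup_t [n^{-5/2}(\lfloor nt\rfloor - 2) \mathbf{B}_3(\lfloor nt\rfloor(\lfloor nt\rfloor - 1))]^2 \leq 4/n$, and analogous $O(1/n)$ bounds hold for the $\mathbf{B}_4, \mathbf{B}_5$ pieces. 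Combining all contributions via $(\sum_{j=1}^k a_j)^2 \leq k\sum_{j=1}^k a_j^2$ and Cauchy--Schwarz produces the first two bounds with explicit constants.

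\textit{Third bound and main obstacle.} For $\mathbbm{E}\|\mathbf{Z}\|^2 \leq 5$, each coordinate $\mathbf{Z}^{(i)}(t)$ is a linear combination of $t\mathbf{B}_j(t^2)$ with independent $\mathbf{B}_1, \mathbf{B}_2$, hence equal in distribution to $\sigma_i t \mathbf{B}(t^2)$ for a standard BM $\mathbf{B}$ and a variance coefficient satisfying $\sigma_1^2 = p(1-p)/2$ and $\sigma_2^2 = 2p^3(1-p)$, both $\leq 1$; since $\sup_{t \in [0,1]}(t\mathbf{B}(t^2))^2 \leq \sup_{s \in [0,1]} \mathbf{B}(s)^2$, Doob gives $\mathbbm{E}\sup_t \mathbf{Z}^{(i)}(t)^2 \leq 4\sigma_i^2$, and summing the two coordinates gives the claim comfortably. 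The main obstacle is not conceptual but organisational: each constituent estimate is a standard Brownian-motion computation, but tracking the multiplicative constants through the chosen version of the modulus-of-continuity inequality, Doob, and the elementary $(a_1 + \cdots + a_k)^2 \leq k\sum a_j^2$ at every juncture, so that the totals reproduce exactly the constants $12$, $51$, $121$, $743$, $5$, requires careful accounting.
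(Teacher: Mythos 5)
Your proposal is correct and follows essentially the same route as the paper: the same telescoping of each main term into a prefactor error (controlled by $|a_n(t)-t|\leq 3/n$ and Doob's $L^2$ inequality) plus a time-change error (controlled by $|b_n(t)-t^2|\leq 3/n$ and an explicit Brownian modulus-of-continuity bound, which the paper takes from a cited lemma), direct Doob estimates for the $\mathbf{B}_3,\mathbf{B}_4,\mathbf{B}_5$ pieces, and the reduction of each coordinate of $\mathbf{Z}$ to $\sigma_i t\mathbf{B}(t^2)$ with $\sigma_1^2=p(1-p)/2$, $\sigma_2^2=2p^3(1-p)$. Your explicit introduction of $\tilde{\mathbf{B}}_i^{(n)}(u)=n^{-1}\mathbf{B}_i(n^2u)$ usefully makes precise a rescaling the paper performs silently; just note that for the final bound you need the actual values of $\sigma_i^2$ (giving $4(\sigma_1^2+\sigma_2^2)\leq 5/2$) rather than merely $\sigma_i^2\leq 1$, which alone would only yield $8$.
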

\begin{proof}
Note the following
\begin{enumerate}
\item By Doob's $L^2$ inequality, 
\begin{equation}\label{fourth_in_1}
\mathbbm{E}\left[\sup_{t\in[0,1]}|\mathbf{B}(t^2)|\right]\leq 2\quad\text{and}\quad\left|\frac{\lfloor nt\rfloor-2}{n}-t\right|\leq \frac{3}{n}, \quad\text{ for all }t\in[0,1].
\end{equation}
\item Using  \cite[Lemma 3]{ito_processes} and the fact that 
$$\left|\frac{\lfloor nt\rfloor(\lfloor nt\rfloor-1)}{n^2}-t^2\right|\leq \left|\frac{(nt-\lfloor nt\rfloor)(nt+\lfloor nt\rfloor)}{n^2}\right|+\frac{1}{n}\leq \frac{3}{n},$$
we obtain
\begin{equation}\label{fourth_in_2}
\mathbbm{E}\left[\sup_{t\in[0,1]}\left|\mathbf{B}\left(\frac{\lfloor nt\rfloor(\lfloor nt\rfloor -1)}{n^2}\right)-\mathbf{B}(t^2)\right|\right]\leq \frac{30\sqrt{3\log\left(\frac{2n}{3}\right)}}{n^{1/2}\sqrt{\pi\log(2)}}.
\end{equation}
\end{enumerate}

Now, we can bound $\mathbbm{E}\left\|\mathbf{Z}_n-\mathbf{Z}\right\|$ in the following way:
\begin{align*}
\mathbbm{E}\left\|\mathbf{Z}_n-\mathbf{Z}\right\|
\leq&\frac{\sqrt{p(1-p)}}{\sqrt{2}}\mathbbm{E}\left[\sup_{t\in[0,1]}\left|\frac{\lfloor nt\rfloor -2}{n}\mathbf{B}\left(\frac{\lfloor nt\rfloor(\lfloor nt\rfloor -1)}{n^2}\right)-t\mathbf{B}(t^2)\right|\right]\nonumber\\
\leq&\frac{\sqrt{p(1-p)}}{\sqrt{2}}\left\lbrace\mathbbm{E}\left[\sup_{t\in[0,1]}\left|\left(\frac{\lfloor nt\rfloor -2}{n}-t\right)\mathbf{B}(t^2)\right|\right]\right.\nonumber\\
&\left.\hphantom{\frac{\sqrt{p(1-p)}}{\sqrt{2}}}+\mathbbm{E}\left[\sup_{t\in[0,1]}\frac{\lfloor nt\rfloor -2}{n}\left|\mathbf{B}\left(\frac{\lfloor nt\rfloor(\lfloor nt\rfloor -1)}{n^2}\right)-\mathbf{B}(t^2)\right|\right]\right\rbrace\nonumber\\
\stackrel{(\ref{fourth_in_1}),(\ref{fourth_in_2})}\leq&\frac{\sqrt{p(1-p)}}{\sqrt{2}}\left(\frac{6}{n}+\frac{30\sqrt{3\log n}}{n^{1/2}\sqrt{\pi\log(2)}}\right)\nonumber\\
\leq&\frac{3}{\sqrt{2}n}+\frac{36\sqrt{\log n}}{\sqrt{n}}.
\end{align*}

Similarly, using Doob's $L^2$ inequality and \cite[Lemma 3]{ito_processes},
\begin{align*}
\mathbbm{E}\|\mathbf{Z}_n-\mathbf{Z}\|^2
\leq&\frac{p(1-p)}{2}\mathbbm{E}\left[\sup_{t\in[0,1]}\left|\frac{\lfloor nt\rfloor -2}{n}\mathbf{B}\left(\frac{\lfloor nt\rfloor(\lfloor nt\rfloor -1)}{n^2}\right)-t\mathbf{B}(t^2)\right|^2\right]\nonumber\\
\leq& \frac{p(1-p)}{2}\left(\mathbbm{E}\left[\sup_{t\in[0,1]}\left|\left(\frac{\lfloor nt\rfloor -2}{n}-t\right)\mathbf{B}(t^2)\right|^2\right]+\mathbbm{E}\left[\sup_{t\in[0,1]}\left|\mathbf{B}\left(\frac{\lfloor nt\rfloor(\lfloor nt\rfloor -1)}{n^2}\right)-\mathbf{B}(t^2)\right|^2\right]\right)\\
\leq &\frac{18p(1-p)}{n^2}+ \frac{135p(1-p)}{\log(2)}\frac{\log\left(\frac{2}{3}n\right)}{n}
\nonumber.\\
\leq& \frac{5}{n^2}+\frac{49\log n}{n}.
\end{align*}
Furthermore, by Doob's $L^2$ inequality,
\begin{align*}
\mathbbm{E}\|\mathbf{Z}\|^2\leq&\frac{p(1-p)}{2}\mathbbm{E}\left[\sup_{t\in[0,1]}t^2\left|\mathbf{B}_1(t^2)\right|^2\right]\leq 2p(1-p)\leq\frac{1}{2}.
\end{align*}
This finishes the proof.
\end{proof}
\end{document}